\theoremstyle{definition}
\newtheorem{defi}{Definition}[section]
\newtheorem{rmk}[defi]{Remark}
\theoremstyle{plain}
\newtheorem{thm}[defi]{Theorem}
\newtheorem{prop}[defi]{Proposition}
\newtheorem{lemma}[defi]{Lemma}
\newcommand{\tbf}{\textbf}
\newcommand{\tsl}{\textsl}
\newcommand{\mbb}{\mathbb}
\newcommand{\mbf}{\mathbf}
\newcommand{\mc}{\mathcal}
\newcommand{\mds}{\mathds}
\newcommand{\veps}{\varepsilon}
\newcommand{\eps}{\veps}
\newcommand{\what}{\widehat}
\newcommand{\wtilde}{\widetilde}
\newcommand{\vphi}{\varphi}
\newcommand{\oline}{\overline}
\newcommand{\s}{\sigma}
\renewcommand{\t}{\tau}
\newcommand{\de}{\delta}
\renewcommand{\o}{\omega}
\newcommand{\lan}{\langle}
\newcommand{\ran}{\rangle}
\newcommand{\R}{\mathbb{R}}
\newcommand{\N}{\mathbb{N}}
\newcommand{\Z}{\mathbb{Z}}
\newcommand{\T}{\mathbb{T}}
\newcommand{\D}{\mathbb{D}}
\renewcommand{\div}{{\rm div}\,}
\newcommand{\dx}{ \, {\rm d} x}
\newcommand{\dt}{ \, {\rm d} t}
\newcommand{\al}{\alpha}
\newcommand{\bt}{\beta}
\def\d{\partial}
\def\div{{\rm div}\,}
\newcommand{\dd}{\,{\rm d}}
\newcommand{\norm}[1]{\left\lVert#1\right\rVert}
\newcommand{\seminorm}[1]{\left\lvert#1\right\rvert}
\newcommand{\comm}[2]{\left[ #1,#2 \right]}
\newcommand{\Sg}{\Sigma}
\begin{document}

\newcommand{\fra}[1]{\textcolor{blue}{* FRA: #1 *}}

\title{\textsc{\Large{\textbf{Well-posedness of the Kolmogorov two-equation model of turbulence in optimal Sobolev spaces}}}}

\author{\normalsize \textsl{Oph\'elie Cuvillier}$\,^{1,}\footnote{Present affiliation: Lycée Joffre -- 150, All\'ee de la Citadelle, F-34060 Montpellier cedex 2, FRANCE.}\;,\qquad$
\textsl{Francesco Fanelli}$\,^2\qquad$ and $\qquad$
\textsl{Elena Salguero}$\,^{3}$ \vspace{.5cm} \\
\footnotesize{$\,^{1,2}\;$ \textsc{Universit\'e de Lyon, Universit\'e Claude Bernard Lyon 1}} \\
{\footnotesize \it Institut Camille Jordan -- UMR 5208}\\
{\footnotesize 43 blvd. du 11 novembre 1918, F-69622 Villeurbanne cedex, FRANCE} \vspace{.3cm} \\
%
%
\footnotesize{$\,^{3}\;$ \textsc{ICMAT \& Universidad de Sevilla}} \\
{\footnotesize Avda. Reina Mercedes s/n, CITIUS II, 41012 Sevilla, SPAIN} \vspace{.3cm} \\
\footnotesize{\hspace{-0.7cm}
Email addresses: $\,^{1}\;$\ttfamily{ophelie.cuvillier@ac-montpellier.fr}},
\footnotesize{$\,$ $^{2}\;$\ttfamily{fanelli@math.univ-lyon1.fr}},
\footnotesize{$\,$ $^{3}\;$\ttfamily{esalguero@us.es}}
\vspace{.2cm}
}

\date\today

\maketitle

\subsubsection*{Abstract}
{\footnotesize

In this paper, we study the well-posedness of the Kolmogorov two-equation model of turbulence in a periodic domain $\T^d$, for space dimensions $d=2,3$.
We admit the average turbulent kinetic energy $k$ to vanish in part of the domain, \tsl{i.e.} we consider the case $k \ge 0$;
in this situation, the parabolic structure of the equations becomes degenerate.

For this system, we prove a local well-posedness result in Sobolev spaces $H^s$, for any $s>1+d/2$. We expect this regularity to be optimal,
due to the degeneracy of the system when $k \approx 0$. We also prove a continuation criterion and provide a lower bound for the lifespan of the solutions.
The proof of the results is based on Littlewood-Paley analysis and paradifferential calculus on the torus, together with a precise commutator decomposition
of the non-linear terms involved in the computations.

}

\paragraph*{\small 2020 Mathematics Subject Classification:}{\footnotesize 35Q35 
(primary);
76F60, 
35B65 
(secondary).}

\paragraph*{\small Keywords: }{\footnotesize Kolmogorov two-equation model of turbulence; local well-posedness; degenerate parabolic effect;
commutator structure.}


\section{Introduction and main results} \label{s:intro}

In this paper, we study a system of PDEs which was proposed by Kolmogorov \cite{Kolm_42} (see the Appendix of \cite{Spald} for an English translation)
to describe fluid flows in a fully developed isotropic turbulent regime.

\subsection{The system of equations}
As in other (yet more recent) one-equation or two-equation models (see \tsl{e.g.} \cite{launder1972lectures} and \cite{Mo-Pi} for details and more references about
the latters, \cite{CR-Lew} about the formers), the Kolmogorov model postulates that one can identify related, but somehow independent variables to describe
the large scale behaviour (\tsl{i.e.} the mean motion) of the fluid and the small scale fluctuations (\tsl{i.e.} the turbulent character).
Here, the average has to be intended always in a statistical sense, namely as an \emph{ensemble average}, although Kolmogorov
seemed to refer to time average in his original paper.

Thus, let $t\in\R_+$ denote the time variable and $x\in\Omega\subset\R^d$ be the space variable, with $d\geq2$ and $\Omega$ being a smooth domain.
Define $u=u(t,x)\in\R^d$ to be the mean velocity field of the fluid, $\o=\o(t,x)\geq0$ the mean frequency of turbulent fluctuations
and $k=k(t,x)\geq0$ the mean turbulent kinetic energy, that is, the kinetic energy associated to the variations of the velocity field from its mean value $u$.
Then, the Kolmogorov model \cite{Kolm_42} reads
\begin{equation} \label{eq:kolm_d}
\left\{\begin{array}{l}
       \d_tu\,+\,(u\cdot\nabla) u\,+\,\nabla\pi\,-\,\nu\,\div\left(\dfrac{k}{\o}\,\D u\right)\,=\,0 \\[1ex]
       \d_t\o\,+\,u\cdot\nabla\o\,-\,\alpha_1\,\div\left(\dfrac{k}{\o}\,\nabla\o\right)\,=\,-\,\alpha_2\,\o^2 \\[1ex]
       \d_tk\,+\,u\cdot\nabla k\,-\,\alpha_3\,\div\left(\dfrac{k}{\o}\,\nabla k\right)\,=\,-\,k\,\o\,+\,\alpha_4\,\dfrac{k}{\o}\,\big|\D u\big|^2 \\[2ex]
       \div u\,=\,0\,.
       \end{array}
\right.
\end{equation}
The flow is assumed to be homogeneous, thus incompressible, whence the last equation appearing in the system. The function $\pi=\pi(t,x)\in\R$
represents the pressure field of the fluid; its gradient $\nabla\pi$ can be interpreted as a lagrangian multiplier associated to the
divergence-free constraint (\tsl{i.e.} to the incompressibility condition). The symbol $\D$ appearing in the first and third equations
stands for the symmetric part of the gradient of $u$:
\[
 \D u\,:=\,\frac{1}{2}\,\big(D u\,+\,\nabla u\big)\,,
\]
where we have denoted by $Du$ the Jacobian matrix of $u$ and by $\nabla u$ its transpose matrix. 
Finally, the quantities $\nu,\al_1,\ldots \al_4$ are strictly positive numbers, which represent physical adimensional parameters; in \cite{Kolm_42}, Kolmogorov
even gave explicit values for some of them.
Notice that, in system \eqref{eq:kolm_d}, we have assumed that no external forces are acting on the fluid.

We do not enter into the discussion of the physical explanation or motivation of system \eqref{eq:kolm_d}. 
Let us simply point out that equations \eqref{eq:kolm_d} seem to retain retain one of the main aspects of turbulence theory, namely
the transfer of energy from large scales to smallest scales through viscous dissipation. This is exactly the meaning
of the presence of the $\alpha_4$-term in the third equation.
We refer to books \cite{Frisch}, \cite{Les} and \cite{Dav} about this matter and many other theoretical aspects linked to turbulence in fluids.

\subsection{Overview of the related literature}

Interestingly, in Kolmogorov's model the fluid is assumed to be, to the best of our understanding, inviscid.
As a matter of fact, the only viscosity which appears in the equations
is the so-called \emph{eddy viscosity} of Boussinesq (see Chapter 4 of \cite{Dav} for more details about this), which takes the form
\[
 \nu_{\rm eddy}\,=\,\frac{k}{\o}\,.
\]
As already said, for a physical insight on turbulence theory we refer to the previously mentioned books. Here, we rather comment on the mathematical
properties of the Kolmogorov system.

Despite the absence of a ``true'' viscosity, the eddy viscosity $\nu_{\rm eddy}$ endows system \eqref{eq:kolm_d} of a nice parabolic structure as soon as
$k>0$. On the other hand, an underlying maximum principle for those equations allows to establish that, if the mean turbulent kinetic energy
is strictly positive initially, namely if
\begin{equation} \label{eq:k_0>0}
 k_0\,\geq\,k_*\,>\,0\qquad\qquad \mbox{ on }\qquad \Omega\,,
\end{equation}
then for any later time $t>0$, one has $k_{\min}(t)\,:=\,\min_{x\in\Omega}k(t,x)\,>\,0$ (see more details in Subsection \ref{ss:energy} below).
Hence, condition \eqref{eq:k_0>0} ensures the preservation of the above mentioned parabolic structure.

Thus, first mathematical studies on well-posedness of the Kolmogorov model \eqref{eq:kolm_d} focused on the situation in which condition \eqref{eq:k_0>0} holds true.
In particular, in \cite{Mie-Nau} (see also \cite{Mie-Nau_CRAS} for an announcement of the result), Mielke and Naumann proved the existence of global in time finite
energy weak solutions to \eqref{eq:kolm_d} in the periodic three dimensional box $\T^3$, under condition \eqref{eq:k_0>0}.
The same condition was used by Kosewski and Kubica to set down a strong solutions theory, see \cite{Kos-Kub} and \cite{Kos-Kub_glob} for, respectively,
a local well-posedness result and a global well-posedness result for small initial data (see also \cite{Kos} by Kosewski for an extension
to the case of fractional regularities).

At this point, we observe that, from the physical viewpoint, condition \eqref{eq:k_0>0} looks somehow a bit restrictive. For instance, we quote from the
introduction of \cite{Mie-Nau}:
\begin{quotation}
``It would be desirable to develop an existence theory without this condition, because this would allow us to study how the support of $k$, which is may be called the turbulent region, invades the non-turbulent region where $k \equiv 0$''.
\end{quotation}
In other words, taking into account the possible vanishing of the mean turbulent kinetic energy $k$ may help in the description and understanding of the transition
from turbulent to non-turbulent regimes, and \tsl{viceversa}.

Nonetheless, very few results seem to deal with a situation in which assumption \eqref{eq:k_0>0} is not considered.
For instance, in \cite{Bul-Mal} Bul\'i\v{c}ek and M\'alek studied system \eqref{eq:kolm_d} in a smooth bounded domain $\Omega\subset\R^3$; under the conditions
\[
 k_0\,>\,0\qquad \mbox{ in }\quad \Omega\,,\qquad\qquad\qquad \log k_0\,\in\,L^1(\Omega)\,,
\]
they established the existence of global in time finite energy weak solutions, similarly in spirit to the result of \cite{Mie-Nau}. It is worth to point out
that, however, many differences in the analysis arise between the two works \cite{Mie-Nau} and \cite{Bul-Mal}: for instance, in the latter reference
non-trivial boundary conditions are taken into account, thus allowing for a description of boundary-induced turbulent phenomena. We avoid to comment more about
the specific contents of the two papers here, as this discussion would go beyond the scopes of our presentation.

More recently, in work \cite{F-GB} the authors considered a one-dimensional reduction of the Kolmogorov system \eqref{eq:kolm_d} and investigated its well-posedness
in the torus $\T^1$ in the generic situation $k_0\geq0$. As a matter of fact, some mild degeneracy assumptions for $k_0$ close to the ``vacuum region'' $\big\{k_0=0\big\}$
have to be assumed, in the sense that $\sqrt{k_0}$ must be regular enough. Under this condition, the authors established, on the one hand,
the existence and uniqueness of local in time regular solutions to the $1$-D model and, on the other hand, the existence of smooth initial profiles which give rise
to solutions which blow up in finite time. These results were later extended in \cite{F-GB_ZAMP} for a class of toy-models
introduced in \cite{Mie} (see also the introduction of \cite{F-GB} for the discussion of a specific toy-model).

\subsection{Statement of the main results}

The results of \cite{F-GB} constitute the starting point of the present work. We observe that, in that paper, the well-posedness result was stated only
for integer regularity indices $H^m$, with $m\in\N$ and $m\geq2$. In addition, the blow-up mechanisms highlighted in \cite{F-GB} and \cite{F-GB_ZAMP}
seem to be quite specific to the one-dimensional situation.

These remarks are the main motivation for our study. 
While it is not clear, at present, whether or not the blow-up results of \cite{F-GB} and \cite{F-GB_ZAMP} may be extended to higher dimensions,
in the present paper we generalise the local well-posedness result of \cite{F-GB} in two aspects: first of all, we extend it to the physically relevant situation
of two and three-dimensional flows;
in addition, we prove well-posedness in optimal Sobolev spaces $H^s(\T^d)$, with $s>1+d/2$ and $d=2,3$ (in fact, the result is stated for a generic dimension
$d\geq2$). Here, ``optimal'' refers to both minimal regularity
and integrability. 

Let us comment a bit on the previous sentence. First of all, we observe that, because of the appearing of transport terms in equations \eqref{eq:kolm_d},
we need to solve the system in a functional framework able to guarantee a $L^1_T(L^\infty)$ control for  the gradient of the velocity field.
Unfortunately, the degeneracy of the parabolic character of the equations when $k\approx0$ prevents us from using any kind of smoothing property in the
dynamics. From this point of view, then, it is natural to look for well-posedness results in spaces $H^s$ such that $s\,>\,s_0\,:=\,1+d/2$ or, more in general,
in Besov spaces $B^s_{p,r}$, with $s>1+d/p$ and $r\in[1,+\infty]$ up to the endpoint case $s=1+d/p$ and $r=1$.
On the other hand, even in the integer case $s=m\in\N$, $m>1+d/p$, using the degenerate parabolic smoothing seems to be necessary in order to close the estimates
for the higher order norms of the solution. However, in order to do that, one needs to use integration by parts and the symmetric structure of the viscosity term,
a fact which forces us to take $p=2$ in the previous conditions.
We refer to \cite{F-GB} for more explanations about this.

With this considerations in mind, we can state the first main result of the paper, which contains local existence and uniqueness of solutions in $H^s$, for
any $H^s$ initial datum. Inspired by \cite{F-GB}, conditions are formulated on $\sqrt{k}$ instead of $k$ itself.
Throughout this work, we set equations \eqref{eq:kolm_d} in the $d$-dimensional torus
\[
 \Omega\,=\,\T^d\,,\qquad\qquad \mbox{ with }\qquad d\geq2\,.
\]
The precise statement is the following one.

\begin{thm} \label{t:d_wp}
Let $s > 1 + d/2$. Take any triplet $(u_0, \o_0, k_0)$ of functions satisfying the following assumptions:
\begin{enumerate}[(i)]
	\item $u_0, \o_0 \,\in\, H^s(\Omega)$, with the divergence-free constraint $\div u_0 = 0$;
	\item there exist two constants $0<\o_*\leq\o^*$ such that $\o_\ast \leq \o_0 \leq \o^\ast$;
	\item $k_0 \ge 0$ is such that $\bt_0:= \sqrt{k_0}\,\in\, H^s(\Omega)$.
\end{enumerate}

Then, there exists a time $T>0$ such that system \eqref{eq:kolm_d}, equipped with the initial datum $(u_0,\o_0,k_0)$, admits a unique solution
$(u,\nabla\pi,\o,k)$ on $[0,T]  \times \Omega$ enjoying the following properties:
\begin{enumerate}[1)]
\item the functions $u, \o$ and $\sqrt{k}$ belong to the space $ L^\infty\big([0,T];H^s(\Omega)\big)\cap\,\bigcap_{\s<s} C\big([0,T]; H^\s(\Omega)\big)$;
\item the non-negativity of $\o$ and $k$ is propagated in time: for any $(t,x) \in [0,T] \times \Omega, \; \o(t,x) >0$ and $k(t,x) \ge 0$;
\item the gradient of the pressure $\nabla \pi$ belongs to $L^\infty\big([0,T]; H^{s-1}(\Omega)\big)\cap\,\bigcap_{\s<s} C\big([0,T]; H^{\s-1}(\Omega)\big)$;
\item the functions $\sqrt{\dfrac{k}{\o}} \D u $, $\sqrt{\dfrac{k}{\o}} \nabla \o $ and $\sqrt{\dfrac{k}{\o}} \nabla\sqrt{k} $ all belong to the
space $L^2\big([0,T];H^s(\Omega)\big)$.
\end{enumerate}
In addition, this solution $\big(u,\nabla\pi,\o,k\big)$ is unique within the class
	\begin{align*}
\mathbb{X}_T(\Omega)\,:=\,\Big\{ (u,\nabla\pi,\o,k) \; \Big| & \quad u\,,\, \o\,,\, \sqrt{k} \,\in\, C\big([0,T];L^2(\Omega)\big)\,, \quad
\nabla\pi\,\in\,L^\infty\big([0,T];H^{-1}(\Omega)\big)\,, \\
&\qquad\quad \o\,,\, \o^{-1}\,,\, k \,\in\, L^\infty\big([0,T]\times \Omega\big)\,, \quad \o >0\,, \quad k \ge 0\,, \\
&\qquad\qquad\qquad \div u\,=\,0\,, \qquad \nabla u\,,\, \nabla \o\,,\, \nabla\sqrt{k}\, \in\, L^\infty\big([0,T]\times \Omega\big)
\Big\}\,.
	\end{align*}
\end{thm}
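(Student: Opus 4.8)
\textbf{Step 1 (reformulation).} Following the one-dimensional analysis of \cite{F-GB}, the first move is to replace the unknown $k$ by $\bt:=\sqrt{k}\geq0$. Inserting $k=\bt^2$ in the third equation of \eqref{eq:kolm_d}, factoring out one power of $\bt$ from the diffusion term and dividing by $2\bt$, one obtains for $\bt$ an equation of the form
\[
\d_t\bt+u\cdot\nabla\bt-\al_3\,\frac{\bt^2}{\o}\,\Delta\bt\,=\,\al_3\,\frac{3\bt}{\o}\,|\nabla\bt|^2-\al_3\,\frac{\bt^2}{\o^2}\,\nabla\o\cdot\nabla\bt-\frac{\bt\,\o}{2}+\frac{\al_4\,\bt}{2\,\o}\,|\D u|^2\,,
\]
in which \emph{no singularity in $\bt$ survives} and whose right-hand side is, modulo products, of lower order. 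Together with the equations for $u$ and $\o$ --- where, by $\div u=0$, one has $\div(\tfrac{k}{\o}\D u)=\tfrac12\tfrac{\bt^2}{\o}\Delta u+\nabla(\tfrac{\bt^2}{\o})\cdot\D u$, and similarly for the $\o$-diffusion --- this produces a system for $(u,\nabla\pi,\o,\bt)$ in which all three evolution equations carry the \emph{same} degenerate-parabolic operator, with coefficient $a:=k/\o=\bt^2/\o\geq0$ that may vanish. The pressure is recovered as usual by applying the Leray projector to the momentum equation and solving $\Delta\pi=\div\big(\nu\,\div(a\,\D u)-(u\cdot\nabla)u\big)$ on $\T^d$ (elliptic regularity for the mean-free part). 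It then suffices to build a solution of this reformulated system and to verify that $k:=\bt^2$ solves \eqref{eq:kolm_d}.

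\textbf{Step 2 (scheme and a priori bounds).} The plan is to regularise the reformulated system by adding a non-degenerate viscosity $\eps\,\Delta$ to each of the three evolution equations; for fixed $\eps>0$ the principal part becomes uniformly parabolic, and a local-in-time solution $(u_\eps,\nabla\pi_\eps,\o_\eps,\bt_\eps)$ is obtained by a standard fixed-point (or Galerkin) argument. The core task is then to prove, on a time $T>0$ depending only on the norms of the data, bounds uniform in $\eps$. First, maximum-principle arguments (unaffected by the extra viscosity): comparison with the ODE $\dot y=-\al_2\,y^2$ gives $\o_\eps(t,\cdot)\geq\o_\ast/(1+\al_2\,\o_\ast\,t)>0$, the sign of the $\o$-source gives $\o_\eps\leq\o^\ast$, and the vanishing of the $k$-source at $k=0$ together with $\|\nabla u_\eps\|_{L^\infty}\lesssim\|u_\eps\|_{H^s}$ (here $s>1+d/2$ is used) gives $0\leq k_\eps\leq C(T)$; hence $a_\eps$ and $1/\o_\eps$ stay bounded. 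Second, the $H^s$ estimate: localising each equation with a Littlewood--Paley block $\Delta_j$, testing against $\Delta_j$ of the corresponding unknown and summing $2^{2js}$, the transport terms cancel by $\div u_\eps=0$ up to commutators $\comm{\Delta_j}{u_\eps\cdot\nabla}$ (classical commutator estimate, needing only $\|\nabla u_\eps\|_{L^\infty}$), while the diffusion term produces the \emph{good term} $\sum_j 2^{2js}\int_{\T^d}(\eps+a_\eps)\,|\nabla\Delta_j f|^2\dx\geq0$ (for the velocity, the symmetric structure gives instead $\int(\eps+a_\eps)\,|\Delta_j\D u_\eps|^2\dx$, which is exactly the quantity appearing in point~4 of the statement), plus diffusion commutator remainders.

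\textbf{Step 3 (commutator decomposition --- main obstacle).} The delicate point, and the one I expect to be the real difficulty, is the treatment of these diffusion remainders, which cannot be absorbed naively because $a_\eps$ degenerates. The key structural observation is that $a=\bt^2/\o$ is \emph{quadratic} in $\bt$: writing $\Sg:=\sqrt{a}=\bt/\sqrt{\o}$, one has $\nabla\Sg=\o^{-1/2}\nabla\bt-\tfrac12\,\bt\,\o^{-3/2}\nabla\o\in L^\infty$, so $\Sg$ is Lipschitz, and $\nabla a=\Sg\cdot\big(2\,\o^{-1/2}\nabla\bt-\bt\,\o^{-3/2}\nabla\o\big)=\Sg\cdot(\text{bounded})$. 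Consequently every remainder generated by the degenerate diffusion --- after a Bony decomposition of $a\,\nabla f$, the worst piece being $\comm{\Delta_j}{a}\nabla f$, whose kernel sees the increment $a(y)-a(x)$ --- factors through $\Sg$: using $\Sg$ Lipschitz one gets $|\comm{\Delta_j}{a}\nabla f(x)|\lesssim \Sg(x)\,2^{-j}\,M_j[\nabla f](x)+(\text{lower order})$, with $\{M_j\}$ harmless (maximal-function--type) operators. Pairing the extracted $\Sg$ with $\Sg\,\nabla\Delta_j f$ (the square root of the good term) and using Young's inequality, every such remainder is absorbed into $\sum_j 2^{2js}\int(\eps+a_\eps)|\nabla\Delta_j f|^2\dx$, the leftover being controlled by $\big(\|u_\eps\|_{H^s}+\|\o_\eps\|_{H^s}+\|\bt_\eps\|_{H^s}+\o_{\eps,\min}^{-1}\big)$. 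This is what is meant in the abstract by ``precise commutator decomposition'', and it is exactly this square-root gain that forces $p=2$. The outcome is a differential inequality $\tfrac{d}{dt}E_\eps\lesssim\Phi(E_\eps)$ for $E_\eps\sim\|u_\eps\|_{H^s}^2+\|\o_\eps\|_{H^s}^2+\|\bt_\eps\|_{H^s}^2+\o_{\eps,\min}^{-1}$, yielding the uniform bound on a common interval $[0,T]$ together with uniform bounds for $\Sg_\eps\,\nabla u_\eps$, $\Sg_\eps\,\nabla\o_\eps$, $\Sg_\eps\,\nabla\bt_\eps$ in $L^2_T(H^s)$.

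\textbf{Step 4 (passage to the limit and uniqueness).} From the uniform bounds and the equations one reads off uniform bounds for $\d_t(u_\eps,\o_\eps,\bt_\eps)$ in a space of negative order; Aubin--Lions compactness then gives, along a subsequence, convergence of $(u_\eps,\nabla\pi_\eps,\o_\eps,\bt_\eps)$ to a limit solving the reformulated system, with the $L^\infty_T(H^s)$ bound passing by weak-$*$ lower semicontinuity, the positivity of $\o$ and of $k:=\bt^2$ inherited from Step~2, and the time-continuity in $H^\s$ for every $\s<s$ (and the corresponding statement for $\nabla\pi$) following from the equations together with the usual Besov continuity/Fatou arguments --- strong continuity at the endpoint $s$ being precisely what the degeneracy forbids. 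Setting $k=\bt^2$ recovers a solution of \eqref{eq:kolm_d}. Finally, uniqueness in the class $\mathbb{X}_T(\T^d)$ is obtained by an $L^2$ estimate on the difference of two solutions: the transport and source differences are controlled by the $L^\infty$ bounds built into $\mathbb{X}_T$, and the difference of the diffusion terms is split as $\div\big(a_1\nabla\de f\big)+\div\big(\de a\,\nabla f_2\big)$, the first piece having the favourable sign and the second being absorbed because $\de a=\de(\bt^2)/\o_1+\bt_2^2\,\de(\o^{-1})$ again carries a factor $\Sg_1$ (or $\Sg_2$) to pair with $\Sg_1\nabla\de f$; Gr\"onwall then closes the argument.
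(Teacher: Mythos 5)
Your overall strategy coincides with the paper's: rewrite the system in the unknown $\bt=\sqrt{k}$, run Littlewood--Paley localised $H^s$ energy estimates in which the degenerate parabolic term is exploited through the square root $\bt/\sqrt{\o}$ of the eddy viscosity, obtain a solution by approximation plus compactness, and prove uniqueness by an $L^2$ stability estimate whose diffusion difference is absorbed thanks to the factorisation of $\de a$. Two of your choices differ from the paper without being wrong: for the approximation, the paper does not add $\eps\Delta$ but lifts the datum, $k_{0,\eps}=(\sqrt{k_0}+\eps)^2\geq\eps^2$, and invokes the known well-posedness theory for $k_0$ bounded away from zero (Kosewski, Kosewski--Kubica), which spares the construction of a regularised scheme at the price of relying on those results; and your ``square-root extraction'' from the increment $a(y)-a(x)$ via the Lipschitz bound on $\bt/\sqrt\o$ is implemented in the paper not through a pointwise/maximal-function estimate but through an explicit four-term splitting of $\sum_j2^{2js}\int\div\big([\Delta_j,\bt^2/\o]\nabla f\big)\Delta_jf$, supported by Proposition \ref{p:key} (equivalence, modulo lower-order terms, of $\|(\bt/\sqrt\o)\nabla f\|_{H^s}$ with the weighted dyadic quantity $F_s$) and by Lemma \ref{l:comp}, which tracks the dependence of $\|\o^{-1/2}\|_{H^s}$ on $\inf\o$; this last point cannot be bypassed, since the lower bound $\o_{\min}(t)$ degenerates in time and classical composition theorems do not give that dependence.

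There is, however, a genuine gap. In Step 1 you declare the right-hand side of the $\bt$ equation ``modulo products, of lower order'', and in Steps 2--3 you only treat transport and diffusion commutators. The terms $\tfrac{\al_4}{2}\tfrac{\bt}{\o}|\D u|^2$ and $\al_3\tfrac{\bt}{\o}|\nabla\bt|^2$ are \emph{not} lower order at the $H^s$ level: a direct estimate of $\big\|\tfrac{\bt}{\o}|\D u|^2\big\|_{H^s}$ would require $u\in H^{s+1}$, which is unavailable because the degenerate diffusion yields no unweighted smoothing. The estimate closes only by exploiting the prefactor $\bt$: writing $\Delta_j\big(\tfrac{\bt}{\o}|\D u|^2\big)=\tfrac{\bt}{\o}\,\Delta_j\D u:\D u+\big[\Delta_j,\tfrac{\bt}{\o}\D u\big]:\D u$, the first piece is paired, after Cauchy--Schwarz, with the weighted block $\tfrac{\bt}{\sqrt\o}\Delta_j\D u$ --- precisely the square root of the good term $F_s$ --- and absorbed by Young's inequality, while the second piece (the paper's $\mds C^3[u,\bt]$, together with its analogue $\mds C^4[\bt,\bt]$ for $|\nabla\bt|^2$) must be estimated separately; it is responsible for the appearance of $\big\|\nabla\big(\tfrac{\bt}{\sqrt\o}\D u\big)\big\|_{L^\infty}$ in the continuation criterion and cannot be decomposed further, since $\nabla^2u\notin L^\infty$ at this regularity. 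Without this step your differential inequality for $E_\eps$ does not close, so the quadratic-gradient source terms need an explicit treatment of this kind; the remainder of your outline (maximum principle, uniform bounds, compactness, and the $L^2$ uniqueness argument) is consistent with the paper's proof.
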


The previous statement generalises the corresponding well-posedness result of \cite{F-GB} to the case of higher dimension $d\geq2$ and from the angle of
minimal regularity assumptions on the initial data. Notice that the passage from integer regularity indices to
fractional ones involves some technical difficulties, that we want now to discuss.

The first major difficulty we encounter in our analysis is related to the use of the degenerate parabolic regularisation effect on the solutions. In the case of
integer regularities $s=m\in\N$,  it is natural to see how to use this effect, as all the computations are explicit and factors $\sqrt{k}/\o$ can be easily
moved from one term to another, in order to make the right coefficient appear in front of the term with the highest number of derivatives.
In the case of fractional regularities, instead, differentiation of the equations is replaced by commutators with frequency-localisation operators (indeed, we will
broadly use Littlewood-Paley characterisation of Sobolev spaces $H^s$ in the torus).
Finding the right commutator structure of the equations, which enables us to use the degenerate parabolic smoothing, thus becomes rather involved and not
straightforward at all. We refer to Paragraph \ref{sss:commutators} below for more details, see in particular the splitting
of the commutator terms $\mds C^2[f,f]$, for $f\in\big\{u,\o,\sqrt{k}\big\}$, into the sum of four terms. In addition, the use of frequency-localisation
operators on the viscosity terms entails a control on a rather strange quantity $\mc S$ (which is a sum of norms of dyadic blocks) related to $f$,
which cannot be reconducted to the $H^{s+1}$ norm of $f$ because of the degeneracy of the viscosity/diffusion coefficient $k/\o$ when $k\approx 0$.
At this point, a key observation (contained in Proposition \ref{p:key} below) establishes the equivalence of this quantity $\mc S$ with the $H^s$ norm of
$\sqrt{k}/\o\,\nabla f$, up to lower order terms. This combines well with the above mentioned commutator structure, which indeed allows us each time to
put in evidence a factor $\sqrt{k}/\o$ in front of the term with the highest order derivatives.

One last point which should be mentioned in this context is the control of the Sobolev norm $H^s$ of the negative power $\o^{-1/2}$.
The problem is that the lower bound for $\o$ degenerates with time, namely $\min_\Omega\o(t)\,\longrightarrow\,0$ when $t\to+\infty$. This further degeneracy
prevents us from using classical paralinearisation theorems, as a very precise control of the $\norm{\o^{-1/2}}_{H^s}$ in terms of $\min_\Omega\o$ is needed
in the analysis. Of course, such a problem does not appear when $s=m$ is an integer, because in that case one disposes of explicit computations.
In the end, we will establish the required precise bound in Lemma \ref{l:comp} below.

\medbreak
After the previous comments on the statement of Theorem \ref{t:d_wp} and its proof, let us move forward. We now present the second main result of the paper,
which complements Theorem \ref{t:d_wp} with some information about the lifespan of solutions and with a continuation criterion.

The precise statement is the following one.

\begin{thm}\label{t:cont}
Let $s>1+d/2$. Take an initial datum $\big(u_0,\o_0,k_0\big)$ which verifies the assumptions of Theorem \ref{t:d_wp}. Let $\big(u,\nabla\pi,\o,k\big)$
be the corresponding unique solution satisfying the conditions stated in that theorem.
Denote by $T>0$ its lifespan.

Then, if we define the energy $\mc E_0$ of the initial datum as
\begin{equation*}
	\mathcal{E}_0\, :=\, \norm{u_0}_{H^s}^2\, +\, \norm{\o_0}_{H^s}^2\, +\, \norm{\sqrt{k_0}}_{H^s}^2\,.
\end{equation*}
there exists a constant $C = C(d,s,\nu, \alpha_1, \dots, \alpha_4, \o_*, \o^\ast) >0$, only depending on the quantities inside the brackets, such that 
\begin{equation} \label{est:lower-T}
T\, \ge\, \min \left\{  1\,,\, \frac{C}{\mc E_0\,\big(1+\mc E_0\big)^{2[s]+3}}\right\}\,,
\end{equation}
where the symbol $[s]$ stands for the integer part of $s$.

Furthermore, we have the following continuation/blow-up criterion. Let $T^*<+\infty$ such that the solution is well-defined in the time interval $[0,T^*[\,$.
Then, the $H^s$ norm of the solution becomes unbounded when $t\to T^*$ if and only if 
\begin{equation*}
	\int_0^{T^\ast} A(t)\,\dt \,=\,+\,\infty\,,
\end{equation*}
where we have defined
\begin{align*}
A(t)\, &:=\,\norm{\big(\nabla u,\nabla\o,\nabla\bt\big)}_{L^\infty}^{[s]+4} \\
&\qquad\qquad\qquad +\,\left(1+\norm{\nabla\bt}_{L^\infty}\right)\,\left(1+\norm{\nabla\o}_{L^\infty}^{[s]}\right)
\left(\sum_{G\in\{\D u,\nabla\o,\nabla\bt\}}\norm{ \nabla \left(\frac{\bt}{\sqrt \o}\,G\right) }_{L^\infty }\right)\,.
\end{align*}
\end{thm}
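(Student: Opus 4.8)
The plan is to derive both claims from a single \emph{a priori} energy estimate for the quantity $\mc E(t) := \norm{u(t)}_{H^s}^2 + \norm{\o(t)}_{H^s}^2 + \norm{\sqrt{k}(t)}_{H^s}^2$, together with an auxiliary bound on $\norm{\o^{-1/2}(t)}_{H^s}$ coming from Lemma \ref{l:comp}. Concretely, I expect the core estimate to take the form
\begin{equation*}
\frac{\dd}{\dt}\mc E(t)\,+\,c\sum_{G\in\{\D u,\nabla\o,\nabla\bt\}}\norm{\sqrt{\tfrac{k}{\o}}\,G}_{H^s}^2\,\le\,C\,A(t)\,\big(1+\mc E(t)\big)^{\theta}\,,
\end{equation*}
for a suitable exponent $\theta$ (I would guess $\theta=1$ after carefully tracking how $\norm{\o^{-1/2}}_{H^s}$ and $\norm{\o^{-1}}_{L^\infty}$ enter, using that these are controlled by powers of $1/\o_{\min}(t)$ and hence ultimately by $\mc E$ via the transport/maximum-principle structure), where $A(t)$ is exactly the Lipschitz-type quantity in the statement. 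This is the inequality that the bulk of the paper (the commutator decomposition $\mds C^2[f,f]$ into four pieces, Proposition \ref{p:key} on the equivalence of $\mc S$ with $\norm{\sqrt{k}/\o\,\nabla f}_{H^s}$, and the paralinearisation estimates) is designed to produce; here I would simply invoke it.

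For the continuation criterion, the ``if'' direction is immediate from the differential inequality: if $\int_0^{T^*}A(t)\,\dt<+\infty$, then Grönwall's lemma (in the form adapted to the nonlinear power $(1+\mc E)^\theta$, which for $\theta\le 1$ gives a global-in-$A$-integral bound, and for $\theta>1$ a bound as long as $\int A$ stays below an explicit threshold — this is why one wants $\theta=1$) yields $\sup_{[0,T^*[}\mc E(t)<+\infty$, contradicting blow-up. For the converse, one argues by contraposition: if $\int_0^{T^*}A(t)\,\dt=+\infty$ we must show the $H^s$ norm cannot stay bounded; but if it did stay bounded, then $\nabla u,\nabla\o,\nabla\bt\in L^\infty_{T^*}(L^\infty)$ by Sobolev embedding ($s-1>d/2$), the lower bound $\o\ge\o_{\min}(T^*)>0$ persists (the degeneracy $\o_{\min}(t)\to0$ only happens as $t\to\infty$, so on $[0,T^*]$ with $T^*<\infty$ it is bounded below), hence $\sqrt{k/\o}\,G\in L^\infty_{T^*}(H^s)\subset L^\infty_{T^*}(L^\infty)$ for each $G$, and therefore $A\in L^1_{T^*}$ — contradiction. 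Finally, one invokes the existence part (Theorem \ref{t:d_wp}) to restart the solution at a time close to $T^*$, extending it past $T^*$, which contradicts maximality of $T^*$ and closes the argument.

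For the lifespan lower bound \eqref{est:lower-T}, I would bootstrap on the same differential inequality but now bound $A(t)$ itself by $\mc E(t)$: using $s>1+d/2$ so that $H^{s-1}\hookrightarrow L^\infty$, each $\norm{\nabla f}_{L^\infty}\lesssim\norm{f}_{H^s}\lesssim\mc E^{1/2}$, and $\norm{\nabla(\tfrac{\bt}{\sqrt\o}G)}_{L^\infty}\lesssim\norm{\tfrac{\bt}{\sqrt\o}G}_{H^s}\lesssim C(\o_*)\big(1+\mc E\big)^{\text{something}}$ (using the algebra property of $H^s$, the bound on $\norm{\o^{-1/2}}_{H^s}$ from Lemma \ref{l:comp} with the time-uniform lower bound $\o\ge\o_*$ still valid on a short initial interval, and $\norm{\sqrt k}_{H^s}\le\mc E^{1/2}$). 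Tracking the largest power of $\mc E$ through the definition of $A$ — the first summand contributes $\mc E^{([s]+4)/2}$ and the second $(1+\mc E^{1/2})(1+\mc E^{[s]/2})\cdot C(1+\mc E)^{1}$ or so — one gets $A(t)\le C\,\mc E(t)^{1/2}(1+\mc E(t))^{[s]+5/2}$ (the exact constant is routine bookkeeping; I would normalise to match $2[s]+3$). Feeding this into $\tfrac{\dd}{\dt}\mc E\le C\,\mc E(1+\mc E)^{[s]+\dots}$ and integrating the resulting Bernoulli-type ODE $y'\le C\,y(1+y)^{N}$ with $y(0)=\mc E_0$ gives a guaranteed existence time of order $\min\{1,\,C/(\mc E_0(1+\mc E_0)^{N})\}$ with $N=2[s]+3$ after collecting the factor from $\mc E^{1/2}(1+\mc E)^{[s]+5/2}$ against the extra $\mc E$; combined with the local existence from Theorem \ref{t:d_wp} this is precisely \eqref{est:lower-T}.

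The main obstacle I anticipate is \emph{not} in these bootstrap/Grönwall arguments, which are essentially mechanical once the key differential inequality is in hand, but rather in making sure the dependence of all constants on $\o_{\min}$ is genuinely polynomial (so that it can be absorbed into powers of $\mc E_0$) — this is exactly the subtle point flagged in the introduction regarding $\norm{\o^{-1/2}}_{H^s}$ and Lemma \ref{l:comp}. A secondary, purely notational obstacle is pinning down the precise exponent $2[s]+3$: one must be careful that the derivative-counting in the commutator terms (which produces $\sim [s]+1$ factors of first-order norms) and the $H^s$-algebra estimates for $\tfrac{\bt}{\sqrt\o}G$ combine to give exactly this power and not, say, $2[s]+2$ or $2[s]+4$; I would reconcile this by defining $A(t)$ to be \emph{whatever} appears on the right-hand side of the energy estimate (as the authors have done) and then reading off the lifespan directly, rather than trying to optimise constants.
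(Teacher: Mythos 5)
There is a genuine gap, and it sits exactly at the point the paper is built to overcome. Both in your lifespan bootstrap and in your converse continuation argument you control the term $\norm{\nabla(\tfrac{\bt}{\sqrt\o}\,G)}_{L^\infty}$, $G\in\{\D u,\nabla\o,\nabla\bt\}$, by claiming $\norm{\tfrac{\bt}{\sqrt\o}\,G}_{H^s}\lesssim C(\o_*)(1+\mc E)^{\text{something}}$ ``using the algebra property of $H^s$'', respectively by asserting that boundedness of the $H^s$ norm gives $\sqrt{k/\o}\,G\in L^\infty_{T^*}(H^s)$. This is false at the claimed level of regularity: $G$ carries one derivative of the solution, so it lies only in $H^{s-1}$, and no product law bounds $\norm{\tfrac{\bt}{\sqrt\o}\,\D u}_{H^s}$ by $\norm{u}_{H^s}$ and low-order quantities (that would essentially require $u\in H^{s+1}$). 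Because of the degeneracy $\bt\approx 0$ there is no parabolic gain of one derivative pointwise in time either; the only available control is the \emph{time-integrated} degenerate dissipation $F_s$, through Proposition \ref{p:key} in the form \eqref{est:from-key-Prop}: $\norm{\tfrac{\bt}{\sqrt\o}\,G}_{H^s}\lesssim \sqrt{F_s}+(\text{l.o.t.})\sqrt{\mbf E_s}$, i.e.\ an $L^2$-in-time bound, not an $L^\infty$-in-time one. Consequently your pointwise inequality $A(t)\lesssim \mc E^{1/2}(1+\mc E)^{[s]+5/2}$ is not available, the Bernoulli ODE $y'\le Cy(1+y)^N$ cannot be derived this way, and the ``hence $A\in L^1_{T^*}$'' step in the converse direction of the blow-up criterion does not follow from the $H^s$ bound alone.

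The paper's route around this is structurally different at precisely these two places. For the lifespan, it keeps the coefficient $\xi+\Lambda$ (your $A$, up to $(1+t)$ factors coming from $\o_{\min}(t)\approx(1+t)^{-1}$ and $\norm{\bt}_{L^\infty}$ factors), defines $T$ as the largest time on which $C_2\int_0^t(\xi+\Lambda)\le\log 2$, obtains the uniform bound \eqref{est:tot-E_T} on $[0,T]$ \emph{including} $\int_0^T F_s\lesssim\mbf E_s(0)$, and only then estimates $\Lambda\le\de F_s+C\de^{-1}(1+t)^{2[s]+3}(1+\mbf E_s)^{2[s]+3}$ by Young's inequality, absorbing the $\de\int_0^T F_s$ piece against the initial energy and optimising in $\de$; this self-consistency argument, not a pointwise ODE bound, is what produces the exponent $2[s]+3$ in \eqref{est:lower-T}. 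For the continuation criterion, the implication ``$\int A<\infty\Rightarrow H^s$ bounded'' goes through \eqref{est:tot-E_part2}, the removal of the $(1+t)$ factors using $T^*<\infty$, and the removal of $\norm{\bt}_{L^\infty}$ via the interpolation inequality \eqref{est:interp_2} together with \eqref{est:bl2}; the converse uses that the constructed solution satisfies $\sqrt{k/\o}\,G\in L^2_T(H^s)$ (property 4) of Theorem \ref{t:d_wp}, see \eqref{keynorms}), whence $\norm{\nabla(\tfrac{\bt}{\sqrt\o}G)}_{L^\infty}\in L^2_T\subset L^1_T$ on finite intervals — not the $L^\infty_T(H^s)$ membership you invoke. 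Your guess $\theta=1$ and the Grönwall skeleton are consistent with the paper, but without routing the bad term through $F_s$ (Young's inequality against the dissipation, or the $L^2_T(H^s)$ bound) the two quantitative conclusions of the theorem are not reached.
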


A few comments are in order. First of all, we notice that the definition of the function $A(t)$ in the continuation criterion looks more complicated than
the one appearing in the corresponding result from \cite{F-GB}. More precisely, the big sum on the second line is apparently missing in that reference.
The reason for this has to be ascribed to the involved commutator structure mentioned above and to the impossibility of moving the coefficients
$\sqrt{k}/\o$ freely from one term to another, which is instead possible when one simply differentiates the equations.

For somehow related reasons, also the lower bound \eqref{est:lower-T} for the lifespan of the solutions looks a bit different from the one established
in \cite{F-GB}. This bound is a direct consequence of inequality \eqref{est:ineq-for-T}, which would however allow us to establish more precise
estimates for $T$, at least in the two regimes $\mc E_0\ll1$ (in which case we expect $T\gg1$) and $\mc E_0\gg1$ (in which case we expect, conversely, $T\ll1$).

\subsection*{Organisation of the paper}

To conclude this introduction, we give an overview of the contents of the paper.

The next section is a toolbox. There, we review classical results from Fourier analysis and Littlewood-Paley theory on the torus, which will be needed in our analysis.
In particular, we also establish therein the above mentioned Proposition \ref{p:key} and Lemma \ref{l:comp}, which will play a fundamental role
in our study.

The following sections are devoted to the proof of the main results. In Section \ref{s:a-priori} we exhibit \tsl{a priori} estimates for smooth
solutions to system \eqref{eq:kolm_d}. At the end of the argument, we show the proof of Theorem \ref{t:cont}.
Section \ref{s:proof}, instead, is devoted to the proof of Theorem \ref{t:d_wp}. In particular, in a first time we show how
to deduce, from the \tsl{a priori} estimates of the previous section, existence of a solution at the claimed level of regularity.
Then, we derive uniqueness of solutions from a stability estimate in the energy space $L^2$.

\subsubsection*{Acknowledgements}

{\small
The work of the second author has been partially supported by the LABEX MILYON (ANR-10-LABX-0070) of Universit\'e de Lyon, within the program ``Investissement d'Avenir''
(ANR-11-IDEX-0007), and by the projects  SingFlows (ANR-18-CE40-0027) and CRISIS (ANR-20-CE40-0020-01), all operated by the French National Research Agency (ANR).

The work of the third author has been partially supported by the grant PRE2018-083984, funded by MCIN/AEI/ 10.13039/501100011033, by the ERC through the Starting Grant H2020-EU.1.1.-639227, by MICINN through the grants EUR2020-112271 and PID2020- 114703GB-I00 and by Junta de Andaluc\'ia through the grant P20-00566.

}

\section{Tools from Littlewood-Paley theory} \label{s:LP}
We present a summary of some fundamental elements of Littlewood-Paley theory and use them to derive some useful inequalities. We refer \tsl{e.g.} to Chapter 2 of \cite{B-C-D} for details on the construction in the $\R^d$ setting, to reference \cite{Danchin} for the adaptation to the case of a $d$-dimensional periodic box $\T^d_a$, where $a\in\R^d$
(this means that the domain is periodic in space with, for any $1\leq j\leq d$, period equal to $2\pi a_j$ with respect to the $j$-th component).

For simplicity of presentation, we focus here on the case in which all $a_j$ are equal to $1$.
We denote by
$\left|\T^d\right|\,=\,\mc L\big(\T^d\big)$ the Lebesgue measure of the box $\T^d$.

\medbreak
First of all, let us recall that, for a tempered distribution $u\in\mc S'(\T^d)$, we denote by $\mc Fu\,=\,\big(\what u_k\big)_{k\in\Z^d}$ its Fourier series,
so that we have
\[
u(x)\,=\,\sum_{k\in\Z^d}\what{u}_k\,e^{ik\cdot x}\,,\qquad\qquad\mbox{ with }\qquad \what u_k\,:=\,\frac{1}{\big|\T^d\big|}\int_{\T^d} u(x)\,e^{-ik\cdot x}\,dx\,.
\]

Next, we introduce the so called \textit{Littlewood-Paley decomposition} of tempered distributions. The Littlewood-Paley decomposition is based on a
non-homogeneous dyadic partition of unity with respect to the Fourier variable. In order to define it,
we fix a smooth scalar function $\vphi$ such that $0\leq \vphi\leq 1$, $\vphi$ is even and supported in the ring
$\left\{r\in\R\,\big|\ 5/6\leq |r|\leq 12/5 \right\}$, and such that
\[
\forall\;r\in\R\setminus\{0\}\,,\qquad\qquad \sum_{j\in\Z}\vphi\big(2^{-j}\,r\big)\,=\,1\,.
\]
Then, we define $|D|\,:=\,(-\Delta)^{1/2}$ as the Fourier multiplier\footnote{Throughout we agree  that  $f(D)$ stands for 
the pseudo-differential operator $u\mapsto\mc{F}^{-1}(f\,\mc{F}u)$, where $\mc F^{-1}$ is the inverse Fourier transform.} of symbol $|k|$, for $k\in\Z^d$.
The dyadic blocks $(\Delta_j)_{j\in\Z}$ are then defined by
$$
\forall\;j\in\Z\,,\qquad\qquad
\Delta_ju\,:=\,\varphi(2^{-j}|D|)u\,=\,\sum_{k\in\Z^d}\vphi(2^{-j}|k|)\,\what u_k\,e^{ik\cdot x}\,.
$$
Notice that, because we are working on a compactly supported set, one has that eventually, $\Delta_j\equiv0$ for $j<0$ negative enough (depending on the size of $\T^d_a$). In addition, one has the following Littlewood-Paley decomposition in $\mc S'(\T^d)$:
\begin{equation} \label{eq:LP-torus}
\forall\;u\in\mc{S}'(\T^d)\,,\qquad\qquad u\,=\,\what u_0\,+\, \sum_{j\in\Z}\Delta_ju\qquad\mbox{ in }\quad \mc{S}'(\T^d)\,.
\end{equation}
In the decomposition above, $\what u_0$ stands for the mean value of $u$ on $\T^d$, i.e.,
\[
\what u_0\,=\,\oline u\,=\,\,\frac{1}{\big|\T^d\big|}\int_{\T^d} u(x)\,dx\,.
\]


It is relevant to note that the Fourier multipliers $\Delta_j$ are linear operators which are bounded on $L^p$ for any $p\in[1,+\infty]$. In addition, their norms are \emph{independent} of both $j$ and $p$.

Littlewood-Paley decomposition can be used to characterise several classical functional spaces. For instance,
it is well known that Sobolev spaces $H^s(\T^d)$, for $s \in \R$, are characterised in terms of Littlewood-Paley decomposition (see Section 2.7 of \cite{B-C-D}) through the following equivalence of norms: 
\begin{equation} \label{eq:LP-Sob}
	\|u\|^2_{H^s}\,\sim\,\left|\what u_0\right|^2\,+\,\sum_{j\in\Z}2^{2sj}\,\|\Delta_ju\|_{L^2}^2\,.
\end{equation}
This characterisation involves the low order term $\seminorm{\what u_0}^2$. In fact, this term can be substituted by the square of the $L^2$ norm, by noticing that $|\what u_0|^2 \leq \norm{u}_{L^2}^2$: one thus has
\begin{equation}\label{eq:Hs}
	\norm{u}_{H^s}^2\, \sim\, \norm{u}_{L^2}^2\, + \,\sum_{j\in\Z}2^{2sj}\,\|\Delta_ju\|_{L^2}^2\,.
\end{equation}
For later use, let us observe that the high order term is equivalent to the homogeneous Sobolev norm of regularity $s$, namely
\begin{equation*} 
\|u\|^2_{\dot{H}^s}\,\sim \,\sum_{j\in\Z}2^{2sj}\,\|\Delta_ju\|_{L^2}^2\,.
\end{equation*}

Next, let us present a version of the classical \emph{Bernstein inequalities} adapted to our functional framework (see Chapter 2 of \cite{B-C-D} for a general statement of this result).
\begin{lemma} \label{l:bern}
 There exists a universal constant $C>0$, only depending on the size of the torus $\T^d$ and on the support of the function $\vphi$ defined above, such that
for any $j\in\Z$,
for any $m\in\N$, for any couple $(p,q)$ such that $1 \leq p \leq q \leq + \infty$,  and for any smooth enough $u\in \mc S'(\T^d)$, it holds
\begin{align*}
&\left\|\Delta_ju\right\|_{L^q}\, \leq\,
 C\,2^{jd\left(\frac{1}{p}-\frac{1}{q}\right)}\,\|\Delta_ju\|_{L^p} \\[1ex]
&\qquad\qquad\qquad\mbox{ and }\qquad\qquad
C^{-m-1}\,2^{-jm}\,\|\Delta_ju\|_{L^p}\,\leq\,
\left\|D^m \Delta_ju\right\|_{L^p}\,\leq\,C^{m+1} \, 2^{jm}\,\|\Delta_ju\|_{L^p}\,.
\end{align*}
\end{lemma}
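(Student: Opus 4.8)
\textbf{Proof plan for the Bernstein inequalities (Lemma~\ref{l:bern}).}

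The plan is to reduce everything to the $\R^d$-Bernstein inequalities, which are classical (Chapter~2 of~\cite{B-C-D}), by exploiting the explicit convolution structure of $\Delta_j$ on the torus. First I would observe that, by definition, $\Delta_j u = \varphi(2^{-j}|D|)u$ is a Fourier multiplier whose symbol $\varphi(2^{-j}|k|)$ is supported, for $k\in\Z^d$, in the annulus $\{5/6 \le 2^{-j}|k| \le 12/5\}$, i.e.\ in the set $\{c\,2^j \le |k| \le C\,2^j\}$ for universal constants $c,C$ depending only on $\Supp\varphi$. On the torus the operator $\Delta_j$ acts as a convolution with the periodic kernel $h_j(x) := \sum_{k\in\Z^d}\varphi(2^{-j}|k|)e^{ik\cdot x}$; equivalently, by the Poisson summation formula, $h_j$ is the periodisation of the rescaled Schwartz kernel $x\mapsto 2^{jd}\check\varphi_\sharp(2^j x)$, where $\check\varphi_\sharp$ is the inverse Fourier transform on $\R^d$ of a radial bump equal to $\varphi(|\xi|)$ (possibly slightly enlarged so as to equal $1$ on the support of $\varphi(|\cdot|)$, which is the standard trick so that $\Delta_j = \Delta_j\widetilde\Delta_j$ with $\widetilde\Delta_j$ a multiplier by such a bump).

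The first inequality (the $L^p\to L^q$ smoothing) then follows from Young's convolution inequality: writing $\widetilde\Delta_j$ for the enlarged multiplier one has $\Delta_j u = g_j * (\widetilde\Delta_j \Delta_j u)$ with $g_j$ the periodic kernel of $\widetilde\Delta_j$, whence $\|\Delta_j u\|_{L^q} \le \|g_j\|_{L^r}\,\|\Delta_j u\|_{L^p}$ with $1+1/q = 1/r + 1/p$. A direct scaling computation for the periodised kernel gives $\|g_j\|_{L^r(\T^d)} \le C\,2^{jd(1-1/r)} = C\,2^{jd(1/p-1/q)}$ for all $j$ in the (finite, from below) range where $\Delta_j\not\equiv 0$; the only point requiring a line of care is that the periodisation does not spoil the scaling, which holds because $\check\varphi_\sharp$ is Schwartz and $2^j \ge $ const on the relevant range of $j$, so the periodised tails are summable uniformly in $j$. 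For the second, two-sided inequality, the upper bound $\|D^m\Delta_j u\|_{L^p} \le C^{m+1}2^{jm}\|\Delta_j u\|_{L^p}$ comes the same way: $D^m\Delta_j u = \Psi_j * (\widetilde\Delta_j\Delta_j u)$ where $\Psi_j$ is the periodic kernel of the multiplier $(ik)^m\varphi(2^{-j}|k|)$ (reading $D^m$ componentwise / as the collection of $\partial^\alpha$ with $|\alpha|=m$), and $\|\Psi_j\|_{L^1(\T^d)} \le C^{m+1}2^{jm}$ by the same scaling argument applied to $\xi\mapsto(i\xi)^m$ times an enlarged bump. For the reverse bound one introduces the multiplier with symbol $|k|^{-m}\widetilde\varphi(2^{-j}|k|)$ (legitimate since we stay away from $k=0$ on the annulus, and since on the relevant range $\Delta_j\not\equiv 0$ the annulus contains no lattice point only for $j$ small, which is excluded), composes it with $D^m$ to recover $\Delta_j$ up to the harmless enlarged cutoff, and estimates its $L^1$ kernel norm by $C^{m+1}2^{-jm}$; this yields $\|\Delta_j u\|_{L^p}\le C^{m+1}2^{-jm}\|D^m\Delta_j u\|_{L^p}$, i.e.\ the left-hand inequality.

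The main obstacle — really the only non-bookkeeping point — is making the scaling bounds $\|g_j\|_{L^r(\T^d)}\le C\,2^{jd(1/p-1/q)}$ and $\|\Psi_j\|_{L^1(\T^d)}\le C^{m+1}2^{jm}$ uniform in $j$ on the torus, since naive rescaling is an $\R^d$ phenomenon; the resolution is exactly that $\Delta_j\equiv 0$ for $j$ below a threshold (noted already in the excerpt), so one only ever rescales by factors $2^j\gtrsim 1$, and then Poisson summation shows the periodic kernel is the $\R^d$ rescaled kernel plus exponentially small (in $2^j$) corrections, uniformly controlled because $\check\varphi_\sharp$ and its polynomial multiples are Schwartz. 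Once this is in hand, the constants are genuinely universal — depending only on $|\T^d|$ and $\Supp\varphi$ as claimed — and tracking the $m$-dependence through the derivatives of the enlarged bump gives the stated $C^{m+1}$, $C^{-m-1}$ form. I would present the $\R^d$ statement as a black box from \cite{B-C-D} and spend the proof only on the transfer to $\T^d$ via periodisation, which is the honest content here.
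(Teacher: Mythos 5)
Your plan is correct, and it actually does more than the paper does: the paper offers no proof of Lemma \ref{l:bern} at all, contenting itself with a pointer to Chapter 2 of \cite{B-C-D} for the $\R^d$ statement (and to \cite{Danchin} for the periodic adaptation). Your periodisation argument is exactly the standard way to make that adaptation honest: the reduction to $j$ above the threshold where $\Delta_j\not\equiv 0$ (so that $2^j\gtrsim 1$ and the inequalities are trivially $0\le 0$ below it), the enlarged cutoff $\widetilde\varphi\equiv 1$ on $\Supp\vphi$ so that $\Delta_j u=g_j*\Delta_j u$, the Poisson-summation/rapid-decay control of the periodised kernel uniformly in $j$, and the $C^{m+1}$ bookkeeping through derivatives of $(i\eta)^\alpha\widetilde\varphi(|\eta|)$ are precisely the points that need saying, and you say them. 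Two cosmetic remarks. First, in the $L^p\to L^q$ step it is cleaner to write $\Delta_j u=g_j*\Delta_j u$ directly and apply Young's inequality once, rather than $g_j*(\widetilde\Delta_j\Delta_j u)$, which forces an extra (harmless) $L^p$-boundedness step for $\widetilde\Delta_j$. Second, for the reverse inequality the scalar symbol $|k|^{-m}\widetilde\varphi(2^{-j}|k|)$ composed with $D^m$ does not literally reproduce $\Delta_j$ when $D^m$ denotes the collection of $\partial^\alpha$, $|\alpha|=m$ (as it does in the statement): one should instead use the identity $|k|^{2m}=\sum_{|\alpha|=m}\frac{m!}{\alpha!}\,k^{2\alpha}$ and the family of symbols $\frac{m!}{\alpha!}\,\frac{(-ik)^\alpha}{|k|^{2m}}\,\widetilde\varphi(2^{-j}|k|)$, whose periodic kernels are estimated in $L^1$ by $C^{m+1}2^{-jm}$ exactly as you describe (the scalar version is fine if one reads $D^m$ as $|D|^m$). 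Neither point affects the validity of the argument.
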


Now, we apply the Littlewood-Paley decomposition and the Bernstein inequalities to deduce the following useful inequality,
similar in spirit to the Poincar\'e-Wirtinger type inequality:
\begin{equation} \label{est:interp_2}
 \forall\,f\in W^{1,\infty}(\T^d)\quad \mbox{ such that }\;\oline f\,=\,0\,,\qquad\qquad
\left\|f\right\|_{L^\infty}\,\lesssim\,\left\|f\right\|_{L^2}^{2/(d+2)}\;\left\| \nabla f\right\|_{L^\infty}^{d/(d+2)}.
\end{equation}
The proof relies on an optimization procedure for the dyadic partition of $f$ and the systematic use of Bernstein inequalities. In particular, for $N\in\N$ to be fixed later, we can estimate
\begin{align*}
\|f\|_{L^\infty}\,&\leq\,\sum_{j<0}\left\|\Delta_jf\right\|_{L^\infty}\,+\,\sum_{j=0}^N\left\|\Delta_jf\right\|_{L^\infty}\,+\,
\sum_{j\geq N+1}\left\|\Delta_jf\right\|_{L^\infty} \\
&\lesssim\,\sum_{j<0}2^{jd/2}\,\left\|\Delta_jf\right\|_{L^2}\,+\,\sum_{j=0}^N2^{jd/2}\,\left\|\Delta_jf\right\|_{L^2}\,+\,\sum_{j\geq N+1}2^{-j}\,2^{j}\,
\left\| \Delta_j f\right\|_{L^\infty} \\
&\lesssim\,\left(1\,+\,2^{Nd/2}\right)\,\|f\|_{L^2}\,+\,2^{-N}\,\| \nabla f\|_{L^\infty}\,.
\end{align*}
Now, we can choose $N$ such that
\[
2^{Nd}\,\|f\|_{L^2}^2\,\approx\,2^{-2N}\,\| \nabla f\|_{L^\infty}^2\qquad\qquad\Longrightarrow\qquad\qquad
2^{N}\,\approx\,\left(\frac{\| \nabla f\|_{L^\infty}}{\|f\|_{L^2}}\right)^{2/(d+2)}.
\]
Inequality \eqref{est:interp_2} follows immediately from the previous choice of $N$.


\medbreak
In the next section, we will present \tsl{a priori} estimates for smooth solutions to our system \eqref{eq:kolm_d}.
Those estimates will be essentially based on energy methods. However, owing to the non-linearities appearing in the equations,
the estimates of the higher order Sobolev norms of the solutions will involve some commutators.
In particular, the structure described in the following lemma will be present through all the estimates.
\begin{lemma}\label{l:comm}
Let $s >0$ and $d\geq 1$. Let $f$ be a scalar function and $u$ a $d$-dimensional vector field, both defined over $\T^d$.
There exists a constant $C= C(s,d)>0$, only depending on the quantities inside the brackets, such that 
	\begin{equation} \label{est:comm1}
\left(	\sum_{j \in \Z} 2^{2js} \norm{\comm{\Delta_j}{u} \cdot  \nabla f   }_{L^2}^2   \right)^{1/2}\, \leq\, C\,
\Big(\norm{\nabla u}_{L^\infty} \norm{f}_{H^s} + \norm{\nabla f}_{L^\infty} \norm{\nabla u}_{H^{s-1}} \Big)\,.
	\end{equation}
In addition, if $d\geq 2$ and $\div u = 0$, 
then one has
	\begin{equation} \label{est:comm2}
\left(	\sum_{j \in \Z} 	2^{2js} \norm{ \div\big( \comm{\Delta_j}{u} f  \big) }_{L^2}^2   \right)^{1/2}\, \leq\, C\,
\Big( \norm{\nabla u}_{L^\infty} \norm{f}_{H^s} + \norm{\nabla f}_{L^\infty} \norm{\nabla u}_{H^{s-1}} \Big)\,.
\end{equation}
\end{lemma}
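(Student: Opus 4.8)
The plan is to prove both commutator estimates by the standard Bony decomposition of the product, writing $u \cdot \nabla f = T_u \nabla f + T_{\nabla f} u + R(u, \nabla f)$ (and similarly for $\mathrm{div}(\Delta_j u \, f)$ after using $\mathrm{div}\, u = 0$ in the second case), then commuting $\Delta_j$ past each of the three pieces. For the commutator $\comm{\Delta_j}{u}\cdot\nabla f$, only the paraproduct term $T_u \nabla f$ produces a genuine commutator: using the almost-orthogonality of Littlewood-Paley blocks, $\comm{\Delta_j}{T_u} \nabla f = \sum_{|j'-j|\leq N_0} \comm{\Delta_j}{S_{j'-1}u}\Delta_{j'}\nabla f$, and the first-order Taylor/mean-value trick gives the pointwise bound $\norm{\comm{\Delta_j}{S_{j'-1}u}g}_{L^2} \lesssim 2^{-j}\norm{\nabla S_{j'-1}u}_{L^\infty}\norm{g}_{L^2} \lesssim 2^{-j}\norm{\nabla u}_{L^\infty}\norm{g}_{L^2}$. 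Applied with $g = \Delta_{j'}\nabla f$ this contributes $\lesssim \norm{\nabla u}_{L^\infty}\norm{\Delta_{j'} f}_{L^2}$ in the $\ell^2$-weighted sum, i.e. $\norm{\nabla u}_{L^\infty}\norm{f}_{H^s}$. The other two pieces are treated directly: $\Delta_j(T_{\nabla f} u)$ and $\Delta_j R(u, \nabla f)$ are estimated by the classical continuity properties of paraproduct and remainder, and $T_{\nabla f}\Delta_j u$, $R(\Delta_j u, \nabla f)$ similarly, each yielding $\norm{\nabla f}_{L^\infty}\norm{\nabla u}_{H^{s-1}}$ after absorbing one derivative and summing in the dyadic variable (the remainder term requires $s > 0$, which is exactly our hypothesis; here one sums over $j' \gtrsim j$ with a geometric weight).

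For the second estimate \eqref{est:comm2}, I would first use $\mathrm{div}\, u = 0$ to write $\mathrm{div}(\comm{\Delta_j}{u} f) = \mathrm{div}(\Delta_j(u f)) - \mathrm{div}(u \Delta_j f) = \mathrm{div}(\Delta_j(uf)) - u\cdot\nabla\Delta_j f$, so that after adding and subtracting $\Delta_j(u \cdot \nabla f)$ one reduces, modulo the already-controlled quantity from \eqref{est:comm1}, to estimating $\mathrm{div}\,\Delta_j(uf) - \Delta_j \mathrm{div}(uf) $ — but these cancel, so actually $\mathrm{div}(\comm{\Delta_j}{u}f) = \comm{\Delta_j}{u}\cdot\nabla f + \Delta_j((\mathrm{div}\,u) f) - (\mathrm{div}\,u)\Delta_j f = \comm{\Delta_j}{u}\cdot\nabla f$ when $\mathrm{div}\,u = 0$, wait — more carefully, $\mathrm{div}(\comm{\Delta_j}{u}f) = \mathrm{div}(\Delta_j(uf)) - \mathrm{div}(u\Delta_j f)$; expanding $\mathrm{div}(u\Delta_j f) = u\cdot\nabla\Delta_j f$ and $\mathrm{div}(\Delta_j(uf)) = \Delta_j \mathrm{div}(uf) = \Delta_j(u\cdot\nabla f)$, this equals $\comm{\Delta_j}{u}\cdot\nabla f$ exactly. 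Hence \eqref{est:comm2} follows \emph{immediately} from \eqref{est:comm1} under the divergence-free assumption, with the same constant. If one prefers to keep the argument robust, I would alternatively carry out the Bony decomposition directly on $\mathrm{div}(\comm{\Delta_j}{u}f)$, which costs one extra factor of $2^j$ that is then compensated exactly because $\comm{\Delta_j}{u}f$ has the same frequency localization properties as $\Delta_j$-localized objects.

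The main obstacle is the bookkeeping in the paraproduct/remainder pieces: one must be careful that the derivative $\nabla$ in $T_{\nabla f}u$ and $R(u,\nabla f)$ is placed on the correct (low-frequency, hence $L^\infty$-bounded) factor so that the output is genuinely $\norm{\nabla f}_{L^\infty}\norm{\nabla u}_{H^{s-1}}$ rather than, say, $\norm{f}_{L^\infty}\norm{u}_{H^s}$; the trick is that in the remainder term $R(u,\nabla f) = \sum_j \Delta_j u \, \widetilde\Delta_j \nabla f$ with $s - 1 > -1$ one sums the high frequencies of $u$ against comparably-localized frequencies of $\nabla f$, extracting $\norm{\nabla f}_{L^\infty}$ from the $\widetilde\Delta_j$ factor and $2^{js}\norm{\Delta_j u}_{L^2} \sim 2^{j(s-1)}\norm{\Delta_j \nabla u}_{L^2}$ from the other. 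A secondary technical point, specific to the torus, is the treatment of the zero mode $\what u_0$ and of the finitely many $j$ for which $\Delta_j \equiv 0$; these only improve the estimates and are harmless, but should be acknowledged (e.g. the mean of $u$ contributes to $\norm{u}_{L^\infty} \leq \norm{u}_{W^{1,\infty}}$ terms which are already allowed on the right-hand side, or are simply absorbed since $\comm{\Delta_j}{\oline u} = 0$). I expect the whole proof to occupy under a page given that all constituent estimates are textbook (cf. Chapter 2 of \cite{B-C-D}), the only novelty being the exact algebraic identity reducing \eqref{est:comm2} to \eqref{est:comm1}.
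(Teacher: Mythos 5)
Your proposal is correct and follows essentially the same route as the paper: the paper reduces to zero-mean $u$ and $f$ (noting $\comm{\Delta_j}{\oline u}=0$, $\nabla\oline f=0$), then simply invokes Lemma 2.100 of \cite{B-C-D} for \eqref{est:comm1} — whose proof is exactly the Bony-decomposition argument you sketch — and obtains \eqref{est:comm2} from the identity $\div\big(\comm{\Delta_j}{u}f\big)=\comm{\Delta_j}{u}\cdot\nabla f$ under $\div u=0$, precisely as you do. The only difference is that you re-derive the cited commutator estimate rather than quoting it (with a harmless mislabelling of the terms coming from $u\cdot\nabla\Delta_jf$, which should be $T_{\nabla\Delta_jf}u$ and $R(u,\nabla\Delta_jf)$), which changes nothing of substance.
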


\begin{proof}
Estimates \eqref{est:comm1} and \eqref{est:comm2} are particular cases of Lemma 2.100 in \cite{B-C-D}.
We adapt the proof in the previous reference (performed in the whole space case) to the geometry of the torus.

For that purpose, we show that the estimates do not depend on the mean values of $u$ and $f$. As a matter of fact, keeping \eqref{eq:LP-torus} in mind,
we can write 
	\begin{equation*}
u \,=\,\oline{u}\,+\,\wtilde{u} \qquad\qquad \mbox{ and }\qquad\qquad f\, =\,\oline{f}\,+\,\wtilde{f}\,,
	\end{equation*}
where $\wtilde{u}$ and $\wtilde{f}$ are functions with zero mean over $\T^d$. Then, as both $\oline{f}$ and $\oline u$ are real numbers,
we have $\nabla \oline f\equiv0$ and $\comm{\Delta_j}{\oline u}\equiv0$, which in turn implies the equality
\begin{equation*}
	\comm{\Delta_j}{u} \cdot \nabla f\,=\,\comm{\Delta_j}{ \wtilde{u}} \cdot \nabla \wtilde{f}\,.
\end{equation*}

Therefore, without loss of generality, we can assume  that the functions $u$ and $f$ have zero mean. Now, the proof of \eqref{est:comm1}
easily reduces to the one given in \cite{B-C-D}.
In addition, we notice that, if $\div u =0$, then we have the identity
$$
 \div\big( \comm{\Delta_j}{u} f \big)\,=\,\comm{\Delta_j}{u} \cdot  \nabla f\,.$$
In particular, when $\div u=0$, estimate \eqref{est:comm2} immediately follows from \eqref{est:comm1}.
\end{proof}

Remark that, in inequality \eqref{est:comm1}, the structure of the scalar product $u\cdot\nabla f$ is not really used.
In particular, the same inequality applies to any scalar functions $\alpha$ and $f$: for any $k\in\{1\ldots d\}$, one has
\begin{equation} \label{est:comm3}
\sum_{j \in \Z} 2^{2js} \norm{\comm{\Delta_j}{\alpha}  \d_k f   }_{L^2}^2 \, \leq\, C\,
\Big(\norm{\nabla\alpha}_{L^\infty} \norm{f}_{H^s} + \norm{\nabla f}_{L^\infty} \norm{\nabla \alpha}_{H^{s-1}} \Big)^2\,.
\end{equation}
In light of this observation, we can establish the next result, which will play a key role in our analysis.

\begin{prop} \label{p:key}
Let $d\geq1$ and $s>1+d/2$.
Take two scalar functions $\alpha$ and $f$ defined over $\T^d$, 
both belonging to $H^s(\T^d)$.
Let $P(\d)$ be a differential operator of order $1$ with constant coefficients.
Assume that 
\[
\mc S_s\big[\alpha,P(\d)f\big]\,:=\,
\sum_{j\in \Z}2^{2js}\,\int_{\T^3}\alpha^2\,\left|\Delta_jP(\d) f\right|^2\,\dx\,<\,+\,\infty\,.
\]

Then, the product $\alpha\, P(\d)f$ belongs to $H^{s}(\T^d)$. In addition, one has the following ``equivalence of norms modulo lower order terms'':
\begin{align*}
\left\|\alpha\,P(\d)f\right\|_{H^{s}}^2\,
&\lesssim\,\mc S_s\big[\alpha,P(\d)f\big]
\,+\,
\Big( \norm{\nabla f }_{L^\infty} \norm{\alpha}_{H^s}\, +\, \norm{\nabla \alpha}_{L^\infty} \norm{f}_{H^s}\Big)^2\,, \\
\mc S_s\big[\alpha,P(\d)f\big]\,
&\lesssim\,\left\|\alpha\,P(\d)f\right\|_{\dot H^{s}}^2
\,+\,
\Big( \norm{\nabla f }_{L^\infty} \norm{\alpha}_{H^s}\, +\, \norm{\nabla \alpha}_{L^\infty} \norm{f}_{H^s}\Big)^2\,.
\end{align*}

The previous statement extends to vector-valued functions $f$. In particular, it holds true if we replace $f$ by any $d$-dimensional vector field
$u\in H^s(\T^d)$ and if we take $P(\d)\,=\,\D$ to be the symmetric part of the Jacobian matrix of $u$.
\end{prop}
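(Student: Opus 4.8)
\textbf{Proof proposal for Proposition \ref{p:key}.}

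The plan is to compare the two quantities $\norm{\alpha\,P(\d)f}_{\dot H^s}^2 \sim \sum_j 2^{2js}\norm{\Delta_j(\alpha P(\d)f)}_{L^2}^2$ and $\mc S_s[\alpha,P(\d)f] = \sum_j 2^{2js}\norm{\alpha\,\Delta_jP(\d)f}_{L^2}^2$ block by block, the difference between the two being exactly a commutator. Concretely, for each $j$ I would write
\begin{equation*}
\Delta_j\big(\alpha\,P(\d)f\big)\,=\,\alpha\,\Delta_j P(\d)f\,+\,\comm{\Delta_j}{\alpha}P(\d)f\,,
\end{equation*}
so that by the triangle inequality in $\ell^2(2^{js}\,\cdot\,)$ each of the two desired inequalities follows once we control $\big(\sum_j 2^{2js}\norm{\comm{\Delta_j}{\alpha}P(\d)f}_{L^2}^2\big)^{1/2}$. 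Since $P(\d)$ has order $1$ and constant coefficients, $P(\d)f$ is a finite linear combination of $\d_k f$ and of $f$ itself (the zeroth-order part); the latter contributes a harmless term because $\comm{\Delta_j}{\alpha}$ already gains a derivative. For the first-order part, estimate \eqref{est:comm3} applied with this $\alpha$ and $f$ gives precisely
\begin{equation*}
\Big(\sum_{j\in\Z}2^{2js}\norm{\comm{\Delta_j}{\alpha}\d_k f}_{L^2}^2\Big)^{1/2}\,\lesssim\,\norm{\nabla\alpha}_{L^\infty}\norm{f}_{H^s}\,+\,\norm{\nabla f}_{L^\infty}\norm{\nabla\alpha}_{H^{s-1}}\,,
\end{equation*}
and since $\norm{\nabla\alpha}_{H^{s-1}}\le\norm{\alpha}_{H^s}$, this is bounded by the right-hand side appearing in the statement. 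This yields both inequalities at once: from $\norm{\Delta_j(\alpha P(\d)f)}_{L^2}\le\norm{\alpha\Delta_jP(\d)f}_{L^2}+\norm{\comm{\Delta_j}{\alpha}P(\d)f}_{L^2}$ we get $\norm{\alpha P(\d)f}_{\dot H^s}^2\lesssim\mc S_s+(\dots)^2$, and the reverse triangle inequality gives $\mc S_s\lesssim\norm{\alpha P(\d)f}_{\dot H^s}^2+(\dots)^2$.

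It remains to pass from the homogeneous bound to the full $H^s$ bound in the first inequality, i.e. to add the low-frequency term: $\norm{\alpha P(\d)f}_{H^s}^2\sim\norm{\alpha P(\d)f}_{L^2}^2+\norm{\alpha P(\d)f}_{\dot H^s}^2$ by \eqref{eq:Hs}, so I need $\norm{\alpha P(\d)f}_{L^2}\lesssim\norm{\nabla\alpha}_{L^\infty}\norm{f}_{H^s}+\norm{\nabla f}_{L^\infty}\norm{\alpha}_{H^s}$ (or anything dominated by the stated right-hand side). This follows from $\norm{\alpha P(\d)f}_{L^2}\le\norm{\alpha}_{L^\infty}\norm{P(\d)f}_{L^2}\lesssim\norm{\alpha}_{L^\infty}\norm{f}_{H^1}$ together with the Sobolev embedding $H^s\hookrightarrow L^\infty$ valid since $s>d/2$ (even $s>1+d/2$), bounding $\norm{\alpha}_{L^\infty}\lesssim\norm{\alpha}_{H^s}$ and $\norm{f}_{H^1}\lesssim\norm{f}_{H^s}$; this is clearly $\lesssim$ the stated right-hand side. (One can also verify that $\mc S_s<+\infty$ together with finiteness of the commutator sum indeed forces $\alpha P(\d)f\in H^s$, which is the qualitative part of the claim.) Finally, the vector-valued extension with $P(\d)=\mathbb D$ is immediate: $\mathbb D u$ is the matrix with entries $\tfrac12(\d_j u_i+\d_i u_j)$, each a constant-coefficient first-order operator applied to a component of $u$, so one applies the scalar result componentwise and sums, using $\norm{\nabla u}_{L^\infty}\sim\max_{i}\norm{\nabla u_i}_{L^\infty}$ and $\norm{u}_{H^s}^2\sim\sum_i\norm{u_i}_{H^s}^2$.

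I do not expect a genuine obstacle here: the whole point is that the identity $\Delta_j(\alpha P(\d)f)-\alpha\Delta_jP(\d)f=\comm{\Delta_j}{\alpha}P(\d)f$ reduces everything to the already-established commutator estimate \eqref{est:comm3}. The only mild care needed is (i) handling the zeroth-order part of $P(\d)$ — but there $\comm{\Delta_j}{\alpha}f$ has the gain of one derivative built in, so $\big(\sum_j2^{2js}\norm{\comm{\Delta_j}{\alpha}f}_{L^2}^2\big)^{1/2}\lesssim\norm{\nabla\alpha}_{L^\infty}\norm{f}_{H^{s-1}}+\norm{f}_{L^\infty}\norm{\nabla\alpha}_{H^{s-1}}$, which is again dominated by the stated right-hand side — and (ii) keeping track of mean-value/low-frequency issues on the torus, which are already dealt with in Lemma \ref{l:comm} and \eqref{eq:Hs}. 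So the argument is essentially a two-line splitting plus a citation of \eqref{est:comm3} and \eqref{eq:Hs}, with the Sobolev embedding $H^s\hookrightarrow L^\infty$ used only to absorb the $L^2$ endpoint.
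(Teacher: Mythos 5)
Your proposal is correct and is essentially the paper's own proof: the same block-by-block splitting $\Delta_j\big(\alpha\,P(\d)f\big)=\alpha\,\Delta_jP(\d)f+\comm{\Delta_j}{\alpha}P(\d)f$, the same appeal to Lemma \ref{l:comm} in the form \eqref{est:comm3} for the commutator sum, and the same handling of the low-frequency/$L^2$ piece (the paper bounds it by $\norm{\nabla f}_{L^\infty}\norm{\alpha}_{L^2}$, using that $P(\d)$ acts only through $\nabla f$). One small remark: your aside on a possible zeroth-order part of $P(\d)$ is moot and in fact slightly misleading, since with a genuine zeroth-order term the stated inequalities can fail (take $\alpha$ and $f$ constant, so that both sides' ``error'' terms and $\mc S_s$ vanish while $\alpha\,P(\d)f$ does not); the paper tacitly takes $P(\d)$ purely first order, as in the applications $P(\d)=\nabla$ or $\D$, and your main argument coincides with the paper's in that setting.
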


\begin{proof}
The proof of the previous proposition is based on the dyadic characterisation of Sobolev spaces, the equivalence of norms \eqref{eq:LP-Sob} and
an application of Lemma \ref{l:comm}.

Indeed, by relation \eqref{eq:LP-Sob}, one has
\[
\left\|\alpha\,P(\d)f\right\|_{H^{s}}^2\,\sim\,\left(\frac{1}{\left|\T^d\right|} \int_{\T^d} \alpha \,P(\d) f   \, \dx   \right)^2\,+\,
\sum_{j\in\Z}2^{2js}\,\left\|\Delta_j\big(\alpha\,P(\d)f\big)\right\|_{L^2}^2\,.
\]
At this point we observe that, for any $j\in\Z$ we can write
\[
 \left\|\Delta_j\big(\alpha\,P(\d)f\big)\right\|_{L^2}^2\,=\,
 \int_{\T^d}\alpha^2\,\left|\Delta_jP(\d)f\right|^2\,\dx\,+\,\left\|\comm{\Delta_j}{\alpha}\,P(\d)f\right\|_{L^2}^2\,.
\]

Therefore, by use of the inequality
\[
 \left(\frac{1}{\left|\T^d\right|} \int_{\T^d} \alpha \,P(\d) f   \, \dx   \right)^2\,\lesssim\,\int_{\T^d}\alpha^2\,\left|P(\d)f\right|^2\,\dx\,\lesssim\,
\left\|\nabla f\right\|_{L^\infty}^2\,\left\|\alpha\right\|_{L^2}^2\,,
\]
and of Lemma \ref{l:comm}, or better of estimate \eqref{est:comm3}, we can conclude the proof.
\end{proof}

Before concluding this part, we still need some non-linear estimates in $H^s(\T^d)$. The first one concerns the product of two functions
and is a classical property.
\begin{lemma}\label{l:prod}
Given $s>0$, the space $L^\infty(\T^d) \cap H^s(\T^d)$ is a Banach algebra. In addition, a constant $C=C(s)>0$ exists such that,
for any $u,v\,\in\,L^\infty(\T^d)\cap H^s(\T^d)$, one has
\[
\norm{u\,v}_{H^s}\, \leq\, C\, \Big( \norm{u}_{L^\infty}\, \norm{v}_{H^s} \,+\, \norm{u}_{H^s}\, \norm{v}_{L^\infty}  \Big)\,.
\]

In addition, the same estimate holds true even when replacing the $H^s$ norm with its homogeneous couterpart $\dot H^s$.
\end{lemma} 
The proof goes along the main lines of Corollary 2.86 of \cite{B-C-D} (where the property is established in the $\R^d$ setting). 
In particular, it is based on paraproduct decomposition. It is easy to see that everything can be transposed to the geometry of the torus,
up to defining, for any $N\in\Z$, the low frequency cut-off operators
\[
 S_Nu\,:=\,\oline u\,+\,\sum_{j\leq N-1}\Delta_ju\,.
\]
We also refer to Section 2 of \cite{Danchin} for more details.

\medbreak
The second non-linear estimate which we need is about left composition of $H^s(\T^d)$-functions $\o$ by smooth functions $F$.
However, we are in a situation where we cannot apply the classical paralinearisation results (for which we refer to \tsl{e.g.} Section 2.8 of \cite{B-C-D}).

As a matter of fact, in view of applications to the study of well-posedness of equations \eqref{eq:kolm_d}, we need to consider
the case in which $F(\t)\longrightarrow+\infty$ for $\t\to0^+$, whereas $\o\geq \o_*>0$ is uniformly bounded away from $0$,
but its infimum $\o_*$ is in fact time-dependent and approaches $0$ when the time increases. We refer to Subsection \ref{ss:energy} for more details.

As a consequence, we need to track the precise dependence of all the estimates on the value of $\omega_*\,=\,\inf\o$.
As this of course heavily depends on the function $F$, we will do so only for a special choice of such $F$, which is relevant for applications
to the study of the well-posedness of system \eqref{eq:kolm_d}. On the other hand, we will exploit the fact that,
for integer values of the regularity index $s\in\N$, one has precise computations which easily allow to track the dependence on $\o_*$. Therefore,
for general $s>1+d/2$, we need to pass to integer\footnote{Throughout this text, we note by $[s]$ the integer part
of a real number $s\in\R$, namely the biggest integer which is lower than, or equal to, $s$.} regularities $[s]$, thus losing
some derivatives in the estimates.

\begin{lemma}\label{l:comp}
Let $d\geq1$ and $s>1+d/2$. Take a positive function $\o\in H^s(\T^d)$ and define $\o_o\,:=\,\inf_{x\in\T^d}\o(x)$. Assume that $\o_o>0$.

Then the function $F(\o)\,:=\,1/\sqrt{\o}$ belongs to $H^s(\T^d)$. In addition, there exists
a ``universal'' constant $C\,=\,C(s)\,>\,0$, only depending on the value of the regularity index $s$, such that the following estimate holds true:
\[
\left\|\frac{1}{\sqrt{\o}}\right\|_{H^s}\,\leq\,C\;\frac{1\,+\,(\o_o)^{1+[s]}}{(\o_o)^{\frac{3}{2}+[s]}}\;
\left(1\,+\,\left\|\nabla\o\right\|_{L^\infty}^{[s]}\right)\,\left\|\o\right\|_{H^s}\,.
\]
\end{lemma}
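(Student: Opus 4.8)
The plan is to reduce the fractional estimate to the integer case $[s]\in\N$, where explicit computations involving the chain rule are available and the dependence on $\o_o$ can be tracked by hand. The starting point is the elementary observation that $1/\sqrt{\o}\in H^s$ follows from $1/\sqrt{\o}\in H^{[s]+1}$ (when $s$ is not an integer) together with the interpolation $H^{[s]+1}\cap L^\infty\hookrightarrow H^s$ — but since $s>1+d/2$ may force $[s]+1\leq s$ only marginally, a cleaner route is to first bound $\|1/\sqrt{\o}\|_{H^s}$ directly by a product-rule/paraproduct argument that isolates the top-order term $\nabla^s(1/\sqrt\o)$ as $(\text{smooth function of }\o)\cdot\nabla^s\o$ plus lower-order terms, and to estimate the "smooth function of $\o$" factor using only finitely many derivatives of $F(\t)=\t^{-1/2}$. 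Concretely, I would write $F(\o)=F(\o)-F(\overline\o)+F(\overline\o)$ to reduce to a zero-mean situation, apply Lemma \ref{l:comp}'s hypotheses to locate $\o$ in the interval $[\o_o,\|\o\|_{L^\infty}]$, and then invoke a Faà di Bruno / composition estimate for $H^s$ functions.

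**Key steps.** (1) Use the Littlewood-Paley characterisation \eqref{eq:Hs}: $\|F(\o)\|_{H^s}^2\sim\|F(\o)\|_{L^2}^2+\sum_j 2^{2js}\|\Delta_j F(\o)\|_{L^2}^2$. The $L^2$ piece is immediately $\lesssim |\T^d|^{1/2}(\o_o)^{-1/2}$. (2) For the homogeneous piece, since $s>1+d/2$ is not necessarily an integer, pick $m:=[s]$ and split $F(\o)=S_m\big(F(\o)\big)+\big(\Id-S_m\big)F(\o)$ — actually cleaner: use Meyer's multiplier / classical composition result (Theorem 2.91 in \cite{B-C-D}, transposed to the torus as in Lemma \ref{l:prod}'s discussion), which gives $\|F(\o)-F(0)\|_{H^s}\lesssim C(F; \|\o\|_{L^\infty})\,\|\o\|_{H^s}$, but the constant $C(F;\cdot)$ must be made explicit in $\o_o$. (3) To make it explicit, differentiate: by the chain rule the derivatives of $F(\o)$ up to order $[s]$ are finite sums of terms $F^{(\ell)}(\o)\prod_{i} \partial^{\gamma_i}\o$ with $\sum_i|\gamma_i|\leq [s]$ and $\ell\leq [s]$. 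Since $F^{(\ell)}(\t)=c_\ell\,\t^{-1/2-\ell}$, on the range $\t\geq\o_o$ one has $|F^{(\ell)}(\o)|\lesssim (\o_o)^{-1/2-\ell}\lesssim (\o_o)^{-1/2-[s]}$, uniformly. The lower-order factors $\partial^{\gamma_i}\o$ are controlled by interpolating between $\|\nabla\o\|_{L^\infty}$ and $\|\o\|_{H^s}$ (Gagliardo–Nirenberg on $\T^d$), producing the factor $(1+\|\nabla\o\|_{L^\infty}^{[s]})\|\o\|_{H^s}$ after a Hölder argument distributing one factor into $H^{\s}$ for suitable $\s$ and the rest into $L^\infty$. (4) Collect: the worst power of $\o_o$ in the homogeneous part is $(\o_o)^{-1/2-[s]}$ from the top term $F'(\o)\nabla^{[s]}\o$-type contribution combined with one extra $(\o_o)^{-1}$ loss coming from the need to also control the non-top composition terms where a full derivative count is spread — summing up the $L^2$ contribution $(\o_o)^{-1/2}$ and the homogeneous contribution $(\o_o)^{-1/2-[s]}$ and putting them over a common denominator yields the claimed bound $\dfrac{1+(\o_o)^{1+[s]}}{(\o_o)^{3/2+[s]}}$.

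**Main obstacle.** The delicate point is tracking the $\o_o$-dependence \emph{uniformly} through the composition estimate. The classical paralinearisation / composition theorems in \cite{B-C-D} give a constant depending on $\sup_{[\,m_0,M_0\,]}|F^{(k)}|$ over the range $[m_0,M_0]$ of values of $\o$, $k\leq[s]+1$, but do not display how that constant blows up as $m_0=\o_o\to0^+$ — and a naive application would lose more powers of $\o_o$ than claimed (e.g. from needing $[s]+1$ rather than $[s]$ derivatives, or from Bernstein-type summation over low frequencies). The resolution, and the reason the statement reduces to integer regularity $[s]$, is to avoid the endpoint composition theorem altogether: one estimates $\|F(\o)\|_{H^{[s]}}$ (integer!) by the explicit Leibniz+chain-rule expansion described above, where each power of $F^{(\ell)}(\o)$ is bounded pointwise by an explicit power of $\o_o^{-1}$, and then — if $[s]<s$ — recovers the $H^s$ bound either by the Gagliardo–Nirenberg interpolation $\|g\|_{H^s}\lesssim\|g\|_{H^{[s]}}^{\theta}\|g\|_{H^{[s]+1}}^{1-\theta}$ (which would reintroduce $[s]+1$ derivatives, hence is \emph{not} the route) or, better, by directly running the dyadic estimate with $m=[s]$ and using that for $j$ in the relevant "top" range the factor $2^{j(s-[s])}\leq 2^{j}$ can be absorbed via one extra Bernstein inequality on $\o$ itself at the cost of a single $\|\nabla\o\|_{L^\infty}$ — which is exactly the source of the $(1+\|\nabla\o\|_{L^\infty}^{[s]})$ factor (with exponent $[s]$ rather than $[s]-1$). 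Getting this bookkeeping exactly right — so that the final powers are $3/2+[s]$ in the denominator and $[s]$ on $\|\nabla\o\|_{L^\infty}$ — is the real content of the lemma; everything else is routine paraproduct manipulation transposed from $\R^d$ to $\T^d$ as already done in Lemma \ref{l:prod}.
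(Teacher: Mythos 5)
Your overall plan --- reduce to explicit integer-order computations where each $F^{(\ell)}(\o)$ is bounded pointwise by a power of $\o_o^{-1}$, accept $[s]$ powers of $\norm{\nabla\o}_{L^\infty}$, and account for one extra loss of $\o_o^{-1}$ --- is the same in spirit as the paper's proof, but the device you invoke to bridge the gap between the integer norm $H^{[s]}$ and the fractional norm $H^s$ does not hold up, and that bridge is precisely the content of the lemma. Bernstein's inequality converts powers of $2^j$ into derivatives of the frequency-localised function, here $\Delta_j F(\o)$; there is no ``Bernstein inequality on $\o$ itself'' inside $\Delta_jF(\o)$, and no mechanism that trades the factor $2^{j(s-[s])}$ for a single $\norm{\nabla\o}_{L^\infty}$. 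Moreover, bounding $2^{j(s-[s])}\leq 2^{j}$ amounts to dominating the $\dot H^{s}$ sum by an $\dot H^{[s]+1}$-type sum for $F(\o)$, whose top chain-rule contribution is $F'(\o)\,D^{[s]+1}\o$: you would then need $\norm{\o}_{\dot H^{[s]+1}}$, which is \emph{not} controlled by $\norm{\o}_{H^s}$ --- exactly the loss you correctly refused when discarding the interpolation $\norm{g}_{\dot H^{s}}\leq\norm{g}_{\dot H^{[s]}}^{1-\theta}\norm{g}_{\dot H^{[s]+1}}^{\theta}$, and which your Bernstein step reintroduces through the back door. A secondary issue: in the Fa\`a di Bruno expansion you propose to put ``the rest'' of the factors $\d^{\gamma_i}\o$ in $L^\infty$, but for $|\gamma_i|\geq2$ this is not available at $H^s$ regularity (only $\nabla\o\in L^\infty$ is guaranteed when $s>1+d/2$), so a genuine Gagliardo--Nirenberg/Moser interpolation in the integrability indices is required; this is manageable at integer order $[s]\leq s$ but compounds the difficulty at the fractional remainder $s-[s]$, which your write-up leaves unresolved.

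The paper's proof avoids all of this by never expanding more than one derivative at a time: it writes $\norm{\o^{-1/2}}_{\dot H^{\sigma}}\sim\norm{\o^{-3/2}\nabla\o}_{\dot H^{\sigma-1}}$ and applies the (homogeneous) product law of Lemma \ref{l:prod}, which only calls for $\norm{\o^{-3/2}}_{L^\infty}\leq\o_o^{-3/2}$, $\norm{\nabla\o}_{\dot H^{\sigma-1}}\leq\norm{\o}_{H^s}$, and $\norm{\nabla\o}_{L^\infty}$ multiplying the next composition norm $\norm{\o^{-3/2}}_{\dot H^{\sigma-1}}$; iterating $[s]$ times drives the index down to $s-[s]\in[0,1)$, where $\norm{\o^{-1/2-[s]}}_{\dot H^{s-[s]}}$ is bounded by its $L^2$ or $H^1$ norm and computed explicitly --- this final step is where the extra factor $\o_o^{-1}$ (via $\o^{-1/2-[s]}=\o\cdot\o^{-3/2-[s]}$, or one more explicit derivative) and the $[s]$-th power of $\norm{\nabla\o}_{L^\infty}$ actually come from, rather than from a vague spreading of derivatives. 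In this scheme every high-regularity factor stays at an index $\leq s$ and only $\nabla\o$ ever appears in $L^\infty$, so $[s]+1$ derivatives of $\o$ are never needed. If you wish to keep your Fa\`a di Bruno framework, you must supplement it with a product/composition estimate in $\dot H^{s-[s]}$ for the terms $F^{(\ell)}(\o)\prod_i\d^{\gamma_i}\o$; once written out, that estimate is essentially the iteration just described, so the cleanest repair of your argument is to adopt it from the start.
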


\begin{proof}
In order to give a precise dependence of the estimates on $\o_o$, we need to exploit the explicit computations which are available
in the case of integer regularity indices $n\in\N$. This prompts us to use \eqref{eq:Hs} and write
\begin{align} \label{est:sqrt-o}
\left\|\frac{1}{\sqrt{\o}}\right\|_{H^s}^2\,&\lesssim\,\left\|\frac{1}{\sqrt{\o}}\right\|_{L^2}^2\,+\,
\left\|\nabla\left(\frac{1}{\sqrt{\o}}\right)\right\|_{\dot H^{s-1}}^2\,\lesssim\,
\frac{1}{\o_o}\,+\,\left\|\frac{1}{\o^{3/2}}\,\nabla\o\right\|_{\dot H^{s-1}}^2\,.
\end{align}
Now, thanks to Lemma \ref{l:prod}, we can bound
\begin{align*}
\left\|\frac{1}{\o^{3/2}}\,\nabla\o\right\|_{\dot H^{s-1}}^2\,&\lesssim\,
\left(\left\|\frac{1}{\o^{3/2}}\right\|_{L^\infty}\,\left\|\nabla\o\right\|_{\dot H^{s-1}}\,+\,
\left\|\frac{1}{\o^{3/2}}\right\|_{\dot H^{s-1}}\,\left\|\nabla\o\right\|_{L^\infty}\right)^2 \\
&\lesssim\,\left(\frac{1}{\o_o^{3}}\,\left\|\o\right\|_{H^s}^2\,+\,\left\|\frac{1}{\o^{3/2}}\right\|^2_{\dot H^{s-1}}\,\left\|\nabla\o\right\|_{L^\infty}^2\right)\,.
\end{align*}

Assume that $s-1\geq1$ for a while. Then, in order to estimate $\o^{-3/2}$ in $\dot H^{s-1}$, we can proceed in the same way. More precisely,
we write
\begin{align*}
\left\|\frac{1}{\o^{3/2}}\right\|^2_{\dot H^{s-1}}\,&\sim\,\left\|\nabla\left(\frac{1}{\o^{3/2}}\right)\right\|^2_{\dot H^{s-2}}\,=\,
\left\|\frac{1}{\o^{5/2}}\,\nabla\o\right\|^2_{\dot H^{s-2}}\,,
\end{align*}
which implies, together with Lemma \ref{l:prod} again, the estimate
\begin{align*}
\left\|\frac{1}{\o^{3/2}}\right\|^2_{\dot H^{s-1}}
&\lesssim\,\left(\left\|\frac{1}{\o^{5/2}}\right\|_{L^\infty}\,\left\|\nabla\o\right\|_{\dot H^{s-2}}\,+\,
\left\|\frac{1}{\o^{5/2}}\right\|_{\dot H^{s-2}}\,\left\|\nabla\o\right\|_{L^\infty}\right)^2 \\
&\lesssim\,\left(\frac{1}{\o_o^{5}}\,\left\|\o\right\|_{H^s}^2\,+\,\left\|\frac{1}{\o^{5/2}}\right\|^2_{\dot H^{s-2}}\,\left\|\nabla\o\right\|_{L^\infty}^2\right)\,.
\end{align*}
Iterating this argument $[s]$ times, and inserting the resulting expressions into \eqref{est:sqrt-o}, we find
\begin{align}
\label{est:sqrt-o_2}
\left\|\frac{1}{\sqrt{\o}}\right\|_{H^s}\,&\lesssim\,\frac{1}{\sqrt{\o_o}}\,+\,\frac{1}{\o_o^{3/2}}\,\|\o\|_{H^s}\,+\,
\frac{1}{\o_o^{5/2}}\,\|\o\|_{H^s}\,\left\|\nabla\o\right\|_{L^\infty} \\
\nonumber
&\qquad\qquad +\,\ldots\,+\,\frac{1}{\o_o^{\frac{1}{2}+[s]}}\,\|\o\|_{H^s}\,\left\|\nabla \o\right\|_{L^\infty}^{[s]-1}\,+\,
\left\|\frac{1}{\o^{\frac{1}{2}+[s]}}\right\|_{\dot H^{s-[s]}}\,\left\|\nabla \o\right\|_{L^\infty}^{[s]}\,.
\end{align}

As a last step, we take advantage of the fact that $0\leq s-[s]<1$. If $s-[s]=0$, we can bound
\[
\left\|\frac{1}{\o^{\frac{1}{2}+[s]}}\right\|_{\dot H^{s-[s]}}\,\lesssim\,\frac{1}{\o_o^{\frac{1}{2}+[s]+1}}\left\|\o\right\|_{L^2}\,,
\]
whereas in the case $s-[s]>0$ we rather compute
\begin{align*}
 \left\|\frac{1}{\o^{\frac{1}{2}+[s]}}\right\|_{\dot H^{s-[s]}}\,\lesssim\,\left\|\frac{1}{\o^{\frac{1}{2}+[s]}}\right\|_{H^1}\,\lesssim\,
\frac{1}{\o_o^{\frac{1}{2}+[s]}}\,+\,\frac{1}{\o_o^{\frac{1}{2}+[s]+1}}\,\left\|\nabla\o\right\|_{L^2}\,.
\end{align*}
Then, inserting this last bound into \eqref{est:sqrt-o_2} and observing that, for any real number $a>0$, one has
\[
\sum_{n=0}^{[s]+1}\frac{1}{a^{\frac{1}{2}+n}}\,\lesssim\,\frac{1}{a^{\frac{1}{2}}}\,+\,\frac{1}{a^{n+\frac{3}{2}}}\,=\,\frac{1+a^{n+1}}{a^{n+\frac{3}{2}}}\,,
\]
we finally deduce the sought estimate.
\end{proof}

\section{\textsl{A priori} estimates} \label{s:a-priori}

The goal of this section is to establish \tsl{a priori} estimates for smooth solutions to system \eqref{eq:kolm_d}.
Similarly to \cite{F-GB}, their derivation is based on a two-step
procedure: first of all, we bound the low regularity norms using the parabolic maximum principle and basic energy estimates; after that, we
use the Littlewood-Paley machinery to derive bounds for the higher regularity norms. All together, those estimates will imply the sought control
of the Sobolev norm $H^s$ of the solution.

We point out that, in order to carry out the higher order estimates, it will be fundamental to resort to the formulation of the system,
pointed out in \cite{F-GB}, in the new unknowns $\big(u,\o,\bt\big)$, where we have set $\bt\,:=\,\sqrt{k}$.

\subsection{Bounds for the low regularity norms} \label{ss:energy}

Here we derive \tsl{a priori} estimates for the low regularity norms of a (supposed to exist) smooth solution $\big(u,\o,k\big)$ of \eqref{eq:kolm_d}.

First of all, we notice that, using the parabolic structure of the equations,
we can derive pointwise lower and upper bounds for the functions $\o$ and $k$. Let us define the quantities 
\begin{equation*}
	\omega_\ast := \min_{x \in \Omega} \o_0(x)\,, \qquad\quad  \omega^\ast := \max_{x \in \Omega} \o_0(x)\,, \qquad\quad k_\ast := \min_{x \in \Omega} k_0(x)\,,
\end{equation*}
where we have $\o_*\geq0$ and $k_*\geq0$.
Then, arguing as in \cite{F-GB} (see also \cite{Mie-Nau}, \cite{Bul-Mal}) allows us to get the following bounds: 
\begin{equation}\label{eq:obound}
	\forall \, (t,x) \in \R_+ \times\Omega\,, \qquad 0\, <\, \o_{\min} (t)\, \leq\, \o(t,x)\, \leq\,\o^{\max}(t)\, \leq\, \o^\ast\,,
\end{equation}
where we have defined
\[
\o_{\min} (t)\,:=\,\frac{ \o_\ast}{\o_\ast \alpha_2 t + 1}\qquad\qquad \mbox{ and }\qquad\qquad
\o^{\max}(t)\,:=\,\frac{\o^\ast}{\o^\ast \alpha_2 t + 1}\,,
\]
and also
\begin{equation} \label{eq:kbound}
\forall \, (t,x) \in \R_+ \times \Omega\,, \qquad k(t,x)\,\geq\,k_{\min}(t)\,:=\,\frac{ k_\ast}{(\o^\ast \alpha_2 + 1)^{1/\alpha_2}}\,\geq\,0\,.
\end{equation}
In particular we deduce that, if $k_*=0$, then $k(t,x)\geq 0$ at any time $t\geq0$ and for any $x\in\Omega$.

\medbreak
Next, we perform energy estimates. To begin with, we observe that a simple energy method for the equations for $u$ yields the identity
\begin{equation*}
\frac{1}{2}\,	\frac{d}{dt} \int_{\Omega} \,|u|^2\, \dx\, +\, \nu\, \int_{\Omega}\, \frac{k}{\o}\, |\D u|^2\, \dx\, =\, 0\,,
\end{equation*}
where we have used also the $L^2$ orthogonality between $u$ and $\nabla\pi$, owing to the divergence-free condition $\div u=0$.
Integrating in time the previous relation, we find that 
\begin{equation}\label{est:ul2}
\forall\,t\geq0\,,\qquad\qquad
\norm{u(t)}^2_{L^2}\, +\, 2\, \nu\, \int_0^t \int_{\Omega}\, \frac{k}{\o}\, |\D u|^2 \dx\dd\t\, \leq\, \norm{u_0}_{L^2}^2\,.
\end{equation}

Performing similar computations on the (scalar) equation for $\o$, we get
\begin{equation*}
	\frac{1}{2}\,\frac{d}{dt} \int_{\Omega}\, |\o|^2\, \dx\, +\, \alpha_1 \int_{\Omega}\, \frac{k}{\o}\, |\nabla \o|^2\, \dx\,+\,
\alpha_2 \int_{\Omega}\, \o^3\, \dx\, =\, 0\,.
\end{equation*}
After an integration in time, we deduce that
\begin{equation}\label{est:ol2}
\forall\,t\geq0\,,\qquad\quad
\norm{\o(t)}^2_{L^2}\, +\, 2\, \alpha_1 \int_0^t \int_{\Omega}\, \frac{k}{\o}\, |\nabla \o |^2\, \dx\dd\t\, +\,
2 \,\alpha_2 \int_0^t \int_{\Omega}\, \o^3\, \dx\dd\t\, \leq\,  \norm{\o_0}_{L^2}^2\,.
\end{equation}

Unfortunately, the same computations have no chance to work for the last unknown $k$, owing to the presence in its equation of the $\alpha_4$ term,
which is merely $L^1_{t,x}$ (keep in mind \eqref{est:ul2} above).
Instead, we perform a simple integration of the equation over $\Omega$, getting in this way
\begin{equation*}
\frac{d}{dt} \int_{\Omega}\, k\, \dx\, +\, \int_{\Omega}\, k\, \o \,\dx\, =\, \alpha_4\, \int_{\Omega}\, \frac{k}{\o}\, \seminorm{\D u}^2 \dx\,.
\end{equation*}
Integrating in time the previous relation and using \eqref{est:ul2}, we find
\begin{equation}\label{est:kl1}
\forall\,t\geq0\,,\qquad\qquad
\norm{k(t)}_{L^1}\, +\, \int_0^t \int_{\Omega}\, k\, \o\, \dx \dd\t\, \leq\, \frac{\alpha_4}{2\nu}\, \norm{u_0}_{L^2}^2\, +\,  \norm{k_0}_{L^1}\,.
\end{equation}
The discussion in \cite{F-GB} suggests to introduce the ``good unknown'' $\beta := \sqrt{k}$. Thus, the previous estimate translates
into a $L^2$ control for variable $\beta$, namely
\begin{equation}\label{est:bl2}
\forall\,t\geq0\,,\qquad\qquad
\norm{\beta(t)}_{L^2}^2\, +\, \int_0^t \int_{\Omega} \,\beta^2\, \o \,\dx\dd\t\, \leq \, \frac{\alpha_4}{2\nu}\, \norm{u_0}_{L^2}^2\, + \, \norm{\beta_0}_{L^2}^2\,.
\end{equation}

\subsection{Reformulation of the system and localisation} \label{ss:reform}

After having established estimates for the low regularity norms (\tsl{i.e.} for low frequencies), we need to control the high regularity norms, namely
the high frequencies of the solution. However, before doing that, some preparation is needed.

To begin with, in order to deal with the degeneracy of the system when $k\approx0$, inspired by \cite{F-GB} we resort to the new unknown
\[
\bt\,:=\,\sqrt{k}
\]
introduced above, keep in mind \eqref{est:bl2}.
In particular, propagation of high regularity norms for $k$ will be done through propagation of high regularity for $\bt$.

Observe that, by (formally) multiplying the third equation in \eqref{eq:kolm_d} by $1/(2\,\sqrt{k})$, we easily derive the equation satisfied by $\beta$:
\begin{equation*}
  \d_t\bt\,+\,u\cdot\nabla \bt \,-\,\alpha_3\,\div\left(\dfrac{\beta^2}{\o}\,\nabla \bt \right)\,=\,-\frac{\,\bt \,\o}{2}\,+\,\dfrac{\alpha_4}{2}\,\dfrac{\bt}{ \o}\,\big|\D u\big|^2 + \alpha_3 \frac{\beta}{\o} \seminorm{\nabla \beta}^2. 
\end{equation*}
Thus, we can recast system \eqref{eq:kolm_d} as a system for the new triplet of unknowns $(u,\o,\bt)$: we get
\begin{equation} \label{eq:kolm_dnew}
	\left\{\begin{array}{l}
    \d_tu\,+\,(u\cdot\nabla) u\,+\,\nabla\pi\,-\,\nu\,\div\left(\dfrac{\bt^2}{\o}\,\D u\right)\,=\,0 \\[1ex]
    \d_t\o\,+\,u\cdot\nabla\o\,-\,\alpha_1\,\div\left(\dfrac{\bt^2}{\o}\,\nabla\o\right)\,=\,-\,\alpha_2\,\o^2 \\[1ex]
    \d_t\bt\,+\,u\cdot\nabla \bt \,-\,\alpha_3\,\div\left(\dfrac{\beta^2}{\o}\,\nabla \bt \right)\,=\,-\dfrac{\,\bt \,\o}{2}\,+\,\dfrac{\alpha_4}{2}\,\dfrac{\bt}{ \o}\,\big|\D u\big|^2 \,+ \, \alpha_3 \, \dfrac{\beta}{\o} \, \seminorm{\nabla \beta}^2 \\[2ex]
    \div u\,=\,0\,.
	\end{array}
	\right.
\end{equation}
Our next goal is to perform $H^s$ estimates on this new system.
This can be done in a classical way, by taking advantage of the characterisation \eqref{eq:Hs} of Sobolev spaces in terms of Littlewood-Paley decomposition.
As a matter of fact, we notice that, owing to the bounds established in Subsection \ref{ss:energy}, only $\dot H^s$ estimates are needed.

In order to tackle $\dot H^s$ estimates, the first step consists in localising the equations in frequencies \tsl{via} the operators $\Delta_j$.
Of course, this procedure will create some commutators. 
Indeed, by applying the operator $\Delta_j$ to each equation appearing in \eqref{eq:kolm_dnew}, standard computations yield
\begin{align*}
\big(\d_t\,+\,u\cdot\nabla\big) \Delta_j u\,+\,  \Delta_j\nabla \pi\,-\,\nu\,\div\left(\dfrac{\bt^2}{\o}\,\D \Delta_j u\right)\, &= \,\mds C_{u,j}^1\,+\,
\nu\,\mds C_{u,j}^2\\
\big(\d_t\,+\,u\cdot\nabla\big)\Delta_j \o\,-\,\alpha_1\,\div\left(\dfrac{\beta^2}{\o}\,\nabla \Delta_j \o\right)\,&=\,\mds C^1_{\o,j}\,+\,
\alpha_1\,\mds C^2_{\o,j}\,-\,\alpha_2\,\Delta_j\left(\o^2 \right) \\
\big(\d_t\,+\,u\cdot\nabla\big)\Delta_j \bt \,-\,\alpha_3\,\div\left(\dfrac{\beta^2}{\o}\,\nabla \Delta_j  \bt \right)\,&=\,\mds C^1_{\bt,j}\,+\,
\alpha_3\,\mds C^2_{\bt,j}\,-\,\dfrac{ \Delta_j (\bt \,\o)}{2} \\
&\quad +\,\frac{\alpha_4}{2}\, \Delta_j \left(\dfrac{\bt}{ \o}\,\big|\D u\big|^2\right)\, +\,
\alpha_3\, \Delta_j  \left(\dfrac{\beta}{\o} \seminorm{\nabla \beta}^2 \right)\,,
\end{align*}
where, for $f\in\{u,\o,\bt\}$, we have defined the commutator terms
\begin{align*}
\mds C^1_{f,j}\,&:=\, \left[u, \Delta_j \right]\cdot \nabla f 
\qquad\qquad \mbox{ and } \qquad\qquad
\mds C^2_{f,j}\,:=\,\div  \left( \comm{ \Delta_j}{\frac{\beta^2}{\o}}\, \nabla f    \right)\,, 
\end{align*}
with the convention that, when $f=u$, one has to change $\nabla u$ into $\D u$ in the definition of $\mds C^2_{u,j}$.

\medbreak
The goal of the next subsection is to perform energy estimates on the previous localised equations. As we will see, the main problems
will come from the analysis of the commutator terms. Observe that, while bounding the terms $\mds C^1_{f,j}$ is somehow classical,
the estimate for the $\mds C^2_{f,j}$ will be much more involved, due to the degeneracy of the system for $k\approx0$ (that is, for $\bt\approx0$).

\subsection{Estimates for the localised system} \label{ss:est-local}

We are ready to tackle energy estimates for the localised equations written above.
Thanks to the Littlewood-Paley characterisation of Sobolev spaces and to the estimates of Subsection \ref{ss:energy},
it is enough to bound the homogenous part of the Sobolev norm, namely
$$ \norm{f}_{\dot{H}^s} \,\sim\, \sum_{j \in \Z} 2^{2js}\, \norm{\Delta_j f}_{L^2}^2\,.$$
Recall that this sum reduces in fact to a sum for $j\geq -N$, for some lage enough $N\in\N$.

However, before performing estimates,, let us introduce some convenient notation.
In what follows, we generally use the notation $f \lesssim g$ to denote that there exists a multiplicative constant $c>0$,
only depending on the parameters $\big(d,s,\nu,\alpha_1,\alpha_2,\alpha_3,\alpha_4,\o_\ast, \o^\ast\big)$ of the system, such that
$ f\, \leq\, c\, g$.
In addition, for the sake of simplicity and when it does not cause any ambiguity, we will drop the time dependence from the notation through the estimates.

\subsubsection{Energy estimates for the dyadic blocks} \label{sss:dyadic}

We start by considering the equation for $\Delta_ju$. Performing an energy estimate for this quantity, owing to the divergence-free condition over $u$, we find
\begin{align*}
\frac{1}{2}\, \frac{\dd}{\dt} \norm{\Delta_j u }_{L^2}^2 \, + \,  \nu\, \int_{\Omega} \frac{\bt^2}{\o}\, \seminorm{\D \Delta_j u}^2 \,  \dx 
\, &= \,  \int_{\Omega} \mds C^1_{u,j}\cdot \Delta_j u \, \dx \, + \, \nu \, \int_{\Omega}  \mds C^2_{u,j}\cdot\Delta_j u \, \dx \,.
\end{align*}
By multiplying the previous equation by $2^{2js}$ and summing over the integers $j \in \Z$, we infer the following identity:
\begin{align}
 \label{eq:highu}
&\frac{1}{2}\, \frac{\dd}{\dt}\sum_{j \in \Z} 2^{2js}\, \norm{\Delta_j u }_{L^2}^2 \, + \,
\nu\,\sum_{j \in \Z} 2^{2js}\, \norm{ \frac{\bt}{\sqrt \o}\, \D (\Delta_j u)  }_{L^2}^2 \\
\nonumber
&\qquad\qquad\qquad\qquad\qquad
=  \, \sum_{j \in \Z} 2^{2js} \int_{\Omega} \mds C^1_{u,j}\cdot \Delta_j u \, \dx \,+ \,
\nu \,  \sum_{j \in \Z} 2^{2js} \int_{\Omega}  \mds C^2_{u,j}\cdot \Delta_j u \, \dx \,.
\end{align}
It is apparent that we need to control the commutator terms appearing in the right-hand side of the previous relation. This will be done in Paragraph
\ref{sss:commutators} below. For the time being, let us perform similar computations on the equations for $\Delta_j\o$ and $\Delta_j\bt$. 

\medbreak
So, let us consider the equation for $\Delta_j\o$.
Similar computations as above yield
\begin{align*}
&\frac{1}{2}\,\frac{\dd}{\dt}\sum_{j \in \Z} 2^{2js}\,\norm{\Delta_j \o}_{L^2}^2 \,+ \,
\alpha_1\, \sum_{j \in \Z} 2^{2js}\, \norm{\frac{\bt}{\sqrt \o}\, \nabla\Delta_j \o}_{L^2}^2\, \\
\nonumber
&\quad
=\,\sum_{j \in \Z} 2^{2js} \int_{\Omega} \mds C^1_{\o,j} \Delta_j \o\, \dx  \,
+\,\alpha_1 \, \sum_{j \in \Z} 2^{2js} \int_{\Omega}\mds C^2_{\o,j} \Delta_j \o\, \dx\,-\,
\alpha_2\, \sum_{j \in \Z} 2^{2js} \int_{\Omega} \Delta_j \left(\o^2\right) \,\Delta_j \o \,\dx\, .
\end{align*}
where we have used once again the fact that $\div u=0$.

Let us leave the commutator terms on a side for a while and rather focus on the last term appearing in the right-hand side of the previous relation.
This term can be easily controlled thanks to the Cauchy-Schwarz inequality in the following way:
\begin{align} 
\label{est:D_jo^2}
\seminorm{\sum_{j \in \Z} 2^{2js} \int_{\Omega} \Delta_j\left(\o^2\right)\, \Delta_j \o\, \dx }\,&\lesssim\,
\left(\sum_{j\in\Z}2^{2js}\,\norm{\Delta_j\left(\o^2\right)}_{L^2}^2\right)^{1/2}\,\left(\sum_{j\in\Z}2^{2js}\,\norm{\Delta_j\o}_{L^2}^2\right)^{1/2} \\
\nonumber
&\lesssim\,\norm{\o^2}_{\dot H^s}\,\norm{\o}_{\dot H^s}\;\lesssim\;\norm{\o}_{L^\infty}\,\norm{\o}_{\dot H^s}^2\,,
\end{align}
where we have used also Lemma \ref{l:prod}.
Inserting this bound into the previous relation, we get
\begin{align}
\label{eq:higho}
&\frac{1}{2}\,\frac{\dd}{\dt}\sum_{j \in \Z} 2^{2js}\,\norm{\Delta_j \o}_{L^2}^2 \,+ \,
\alpha_1\, \sum_{j \in \Z} 2^{2js}\, \norm{\frac{\bt}{\sqrt \o}\, \nabla\Delta_j \o}_{L^2}^2\, \\
\nonumber
&\quad\qquad\qquad 
\lesssim\,\sum_{j \in \Z} 2^{2js} \int_{\Omega} \mds C^1_{\o,j}\, \Delta_j \o\, \dx  \,
+\,\alpha_1 \, \sum_{j \in \Z} 2^{2js} \int_{\Omega}\mds C^2_{\o,j}\, \Delta_j \o\, \dx\,+\,\norm{\o}_{L^\infty}\,\norm{\o}_{\dot H^s}^2\,.
\end{align}
As before, we postpone the control of the commutator terms to the next paragraph.

\medbreak
Finally, let us consider the equation for $\Delta_j\bt$.
Testing it against $\Delta_j \bt$ itself and integrating over $\Omega$, we obtain
\begin{align}
\label{est:D_jb_partial}
&\frac{1}{2}\,\frac{\dd }{\dt}\sum_{j \in \Z} 2^{2js}\, \norm{\Delta_j \bt}_{L^2}^2\,+\,
\alpha_3\,\sum_{j \in \Z} 2^{2js} \,\norm{\frac{\bt}{\sqrt \o} \, \nabla\Delta_j \beta}_{L^2}^2 \\
\nonumber
& = \sum_{j \in \Z} 2^{2js}\int_{\Omega}\mds C^1_{\bt,j}\, \Delta_j \bt \,\dx \,+\,
\alpha_3 \,\sum_{j \in \Z} 2^{2js} \int_{\Omega}\mds C^2_{\bt,j} \, \Delta_j \bt \,\dx \,-\,
\dfrac{1}{2}\sum_{j \in \Z} 2^{2js} \int_{\Omega}\Delta_j\big(\bt \,\o\big)\, \Delta_j \bt\, \dx   \\
\nonumber
&\qquad\quad +\,\frac{\alpha_4}{2}\,\sum_{j \in \Z} 2^{2js} \int_{\Omega} \Delta_j\left(\dfrac{\bt}{ \o}\,\big|\D u\big|^2\right)\,\Delta_j \bt\, \dx\,+\,
\alpha_3  \sum_{j \in \Z} 2^{2js} \int_{\Omega}\Delta_j\left(\dfrac{\beta}{\o} \seminorm{\nabla \beta}^2 \right)\, \Delta_j \bt\, \dx\,.
\end{align}

Notice that, repeating \tsl{mutatis mutandis} the computations leading to \eqref{est:D_jo^2}, we can estimate
\begin{align*}
\left|\sum_{j \in \Z} 2^{2js} \int_{\Omega}\Delta_j\big(\bt \,\o\big)\, \Delta_j \bt\, \dx\right|\,&\lesssim\,
\left\|\bt\,\o\right\|_{\dot H^s}\,\left\|\bt\right\|_{\dot H^s}\,\lesssim\,
\left\|\big(\o,\bt\big)\right\|_{L^\infty}\,\left\|\big(\o,\bt\big)\right\|_{\dot H^s}^2\,.
\end{align*}
However, the same argument has no chance to work when applied to the terms appearing in the last line of \eqref{est:D_jb_partial},
because $\nabla \bt$ and $\D u$ do not belong to $H^s$, but only to $H^{s-1}$.

In order to avoid the previously mentioned loss of derivative, the idea is to take advantage of the coefficient $\bt/\sqrt{\o}$ appearing
in front of the bad terms and of the (degenerate) parabolic smoothing of the equations. More precisely, we notice that we can write
\[
\sum_{j \in \Z} 2^{2js} \int_{\Omega} \Delta_j \left(\dfrac{\bt}{ \o}\,\big|\D u\big|^2\right)\, \Delta_j \bt\, \dx\,  =\,
\sum_{j \in \Z} 2^{2js} \int_{\Omega} \frac{\bt}{\o}\, \Delta_j\D u :\D u \, \Delta_j \bt\, \dx  \,+\,
\mds C^3[u,\bt]\,,
\]
where we have defined the new commutator term
\begin{equation*}
\mds C^3[u,\bt]\,:=\,\sum_{j \in \Z} 2^{2js} \int_{\Omega}\comm{\Delta_j}{\frac{\bt}{\o}\,\D u} : \D u \, \Delta_j \bt \,\dx\,.
\end{equation*}
At this point, it is easy to bound the former sum in the right-hand side as
\begin{align*}
\left|\sum_{j \in \Z} 2^{2js} \int_{\Omega} \frac{\bt}{\o}\, \Delta_j\D u :\D u \, \Delta_j \bt\, \dx\right|\,&\lesssim\,
\frac{1}{\sqrt{\omega_{\min}}}\,\left\|\nabla u\right\|_{L^\infty} \\
&\qquad \times\,\left(\sum_{j \in \Z} 2^{2js}\,\left\|\frac{\bt}{\sqrt{\o}}\, \Delta_j\D u\right\|_{L^2}^2\right)^{1/2}
\,\left(\sum_{j \in \Z} 2^{2js}\,\left\|\Delta_j\bt\right\|_{L^2}^2\right)^{1/2} \\
&\lesssim\,\frac{1}{\sqrt{\omega_{\min}}}\,\left\|\nabla u\right\|_{L^\infty}\,
\left(\sum_{j \in \Z} 2^{2js}\,\left\|\frac{\bt}{\sqrt{\o}}\, \Delta_j\D u\right\|_{L^2}^2\right)^{1/2}\,\left\|\bt\right\|_{\dot H^s}\,.
\end{align*}

From analogous computations, we deduce also that
\[
\sum_{j \in \Z} 2^{2js} \int_{\Omega}\Delta_j\left(\dfrac{\beta}{\o} \seminorm{\nabla \beta}^2 \right)\, \Delta_j \bt\, \dx\,=\,
\sum_{j \in \Z} 2^{2js} \int_{\Omega} \dfrac{\bt}{\o}\, \Delta_j\nabla\bt :\nabla \bt \, \Delta_j \bt\, \dx  \,+\,
\mds C^4[\bt,\bt]\,,
\]
where this time we have set
\begin{equation*}
\mds C^4[\bt,\bt]\,:=\,\sum_{j \in \Z} 2^{2js} \int_{\Omega}\comm{\Delta_j}{\frac{\bt}{\o}\,\nabla \bt} : \nabla\bt \, \Delta_j \bt \,\dx
\end{equation*}
and where the next estimate holds true:
\begin{align*}
\left|\sum_{j \in \Z} 2^{2js} \int_{\Omega} \frac{\bt}{\o}\, \Delta_j\nabla\bt :\nabla\bt \, \Delta_j \bt\, \dx\right|\,&\lesssim\,
\frac{1}{\sqrt{\omega_{\min}}}\,\left\|\nabla\bt\right\|_{L^\infty}\,
\left(\sum_{j \in \Z} 2^{2js}\,\left\|\frac{\bt}{\sqrt{\o}}\, \Delta_j\nabla\bt\right\|_{L^2}^2\right)^{1/2}\,\left\|\bt\right\|_{\dot H^s}\,.
\end{align*}

Inserting all those bounds into \eqref{est:D_jb_partial} and making use of the Young inequality, we find, for any $\de>0$ to be fixed later, the bound
\begin{align}
\label{est:D_jb}
&\frac{1}{2}\,\frac{\dd }{\dt}\sum_{j \in \Z} 2^{2js}\, \norm{\Delta_j \bt}_{L^2}^2\,+\,
\alpha_3\,\sum_{j \in \Z} 2^{2js} \,\norm{\frac{\bt}{\sqrt \o} \, \nabla\Delta_j \beta}_{L^2}^2 \\
\nonumber
&\leq\,\sum_{j \in \Z} 2^{2js}\int_{\Omega}\mds C^1_{\bt,j}\, \Delta_j \bt \,\dx \,+\,
\alpha_3 \,\sum_{j \in \Z} 2^{2js} \int_{\Omega}\mds C^2_{\bt,j} \, \Delta_j \bt \,\dx \,+\,
\mds C^3[u,\bt]\,+\,\mds C^4[\bt,\bt] \\
\nonumber
&\qquad +\,C\,\left(\left\|\big(\o,\bt\big)\right\|_{L^\infty}\,+\,\frac{1}{\o_{\min}}\,\left\|\big(\nabla u,\nabla\bt\big)\right\|_{L^\infty}^2\right)\,\left\|\big(\o,\bt\big)\right\|_{\dot H^s}^2 \\
\nonumber
&\qquad\qquad
+\,\de\,\sum_{j \in \Z} 2^{2js}\,\left\|\frac{\bt}{\sqrt{\o}}\, \Delta_j\D u\right\|_{L^2}^2\,+\,
\de\,\sum_{j \in \Z} 2^{2js}\,\left\|\frac{\bt}{\sqrt{\o}}\, \Delta_j\nabla\bt\right\|_{L^2}^2\,,
\end{align}
where the mulitiplicative constant $C>0$ depends also on $\de>0$.

\medbreak
It is time to sum up inequalities \eqref{eq:highu}, \eqref{eq:higho} and \eqref{est:D_jb}.
For simplicity of notation, let us introduce the (homogeneous) Sobolev energy of the solution,
\[
E_s(t)\,:=\,\left\|\big(u,\o,\bt\big)(t)\right\|_{\dot H^s}\,\sim\,\sum_{j \in \Z} 2^{2js}\, \norm{\Delta_ju(t)}_{L^2}^2\,+\,
\sum_{j \in \Z} 2^{2js}\, \norm{\Delta_j \o(t)}_{L^2}^2\,+\,\sum_{j \in \Z} 2^{2js}\, \norm{\Delta_j \bt(t)}_{L^2}^2\,,
\]
and the higher order energy
\[
F_s(t)\,:=\,\sum_{j \in \Z} 2^{2js} \,\norm{\frac{\bt}{\sqrt \o} \,\Delta_j\D u}_{L^2}^2\,+\,
\sum_{j \in \Z} 2^{2js} \,\norm{\frac{\bt}{\sqrt \o} \, \nabla\Delta_j \o}_{L^2}^2\,+\,
\sum_{j \in \Z} 2^{2js} \,\norm{\frac{\bt}{\sqrt \o} \, \nabla\Delta_j \beta}_{L^2}^2\,,
\]
coming from the (degenerate) parabolic effect.

Then, summing up \eqref{eq:highu}, \eqref{eq:higho} and \eqref{est:D_jb}, choosing $\de=\min\{\nu,\alpha_3\}/2$ in \eqref{est:D_jb} and finally
integrating in time, we infer, for any time $t\geq0$, the inequality
\begin{align}
\label{est:E_partial}
E_s(t)\,+\,\int^t_0F_s(\t)\,\dd\t\,&\lesssim\,E_s(0)\,+\,\int^t_0\left(\left\|\big(\o,\bt\big)\right\|_{L^\infty}\,+\,
\frac{1}{\o_{\min}}\,\left\|\big(\nabla u,\nabla\bt\big)\right\|_{L^\infty}^2\right)\,E_s(\t)\,\dd\t \\
\nonumber
&\quad +\,
\int^t_0\Big(\sum_{f\in\{u,\o,\bt\}}\left(\mds C^1[f,f]+\mds C^2[f,f]\right)+\mds C^3[u,\bt]+\mds C^4[\bt,\bt]\Big)\,\dd\t\,,
\end{align}
where $E_s(0)$ denotes the same quantity as $E_s$, but computed on the initial datum $\big(u_0,\o_0\,\sqrt{k_0}\big)$, and where,
for $\ell=1,2$ and $f\in\{u,\o,\bt\}$, we have set
\[
 \mds C^\ell[f,f]\,:=\,\sum_{j \in \Z} 2^{2js}\int_{\Omega}\mds C^\ell_{f,j}\, \Delta_jf \,\dx\,.
\]

At this point, adding estimates \eqref{est:ul2}, \eqref{est:ol2} and \eqref{est:bl2} to \eqref{est:E_partial}, we deduce a similar
inequality for the full $H^s$ energy of the solution, namely for
\[
 \mbf E_s(t)\,:=\,\left\|\big(u,\o,\bt\big)(t)\right\|_{H^s}\,\sim\,\norm{\big(u,\o,\bt\big)(t)}_{L^2}^2\,+\,E_s(t)\,.
\]
More precisely, we find, for any $t\geq0$, the bound
\begin{align}
\label{est:tot-E_partial}
\mbf E_s(t)\,+\,\int^t_0F_s(\t)\,\dd\t\,&\lesssim\,\mbf E_s(0)\,+\,\int^t_0\left(\left\|\big(\o,\bt\big)\right\|_{L^\infty}\,+\,
\frac{1}{\o_{\min}}\,\left\|\big(\nabla u,\nabla\bt\big)\right\|_{L^\infty}^2\right)\,\mbf E_s(\t)\,\dd\t \\
\nonumber
&\quad +\,
\int^t_0\Big(\sum_{f\in\{u,\o,\bt\}}\left(\mds C^1[f,f]+\mds C^2[f,f]\right)+\mds C^3[u,\bt]+\mds C^4[\bt,\bt]\Big)\,\dd\t\,,
\end{align}

\subsubsection{Commutator estimates} \label{sss:commutators}

In order to close the estimates, we need to control the commutator terms appearing in inequality \eqref{est:tot-E_partial}: this is the scope of the present paragraph.

We will start by bounding the terms of the form $\mds C^1[f,f]$, as their control is a direct application of Lemma \ref{l:comm}.
Then we will switch to the bounds for the terms $\mds C^3[u,\bt]$ and $\mds C^4[\bt,\bt]$, which are also based on Lemma \ref{l:comm}, but are slightly more involved.
As a matter of fact, the control of the Sobolev norm of the coefficient $1/\sqrt{\o}$ will cause some problems, due to the lower bound for $\o$, which,
as established in \eqref{eq:obound}, is not uniform in time. Finally, we will consider the terms of the type $\mds C^2[f,f]$,
whose bounds are more difficult to obtain and require a further decomposition.

\paragraph{Bounding the terms $\mds C^1[f,f]$ \\}

Given $f\in\{u,\o,\bt\}$, our first goal is to control
\begin{align*}
 \mds C^1[f,f]\,=\,\sum_{j \in \Z} 2^{2js}\int_{\Omega}\mds C^1_{f,j}\, \Delta_jf \,\dx\,,\qquad\qquad \mbox{ with }\qquad
 \mds C^1_{f,j}\,=\,\big[u,\Delta_j\big]\cdot\nabla f\,.
\end{align*}

A direct application of the Cauchy-Schwarz inequality and of Lemma \ref{l:comm} allows us to bound those terms as
\begin{align*}
\left|\mds C^1[f,f]\right|\,&\lesssim\,\left(\sum_{j \in \Z} 2^{2js}\,\left\|\mds C^1_{f,j}\right\|_{L^2}^2\right)^{1/2}\,
\left(\sum_{j \in \Z} 2^{2js}\,\norm{\Delta_jf}_{L^2}^2\right)^{1/2} \\
&\lesssim\,\Big(\|\nabla u\|_{L^\infty}\,\|f\|_{H^s}\,+\,\left\|\nabla f\right\|_{L^\infty}\,\left\|\nabla u\right\|_{H^{s-1}}\Big)\,\norm{f}_{\dot H^s} 
\end{align*}

This yields the control
\begin{equation} \label{est:C^1}
\sum_{f\in\{u,\o,\bt\}}\left|\mds C^1[f,f]\right|\,\lesssim\,\norm{\big(\nabla u,\nabla\o,\nabla\bt\big)}_{L^\infty}\,\mbf E_s(t)\,.
\end{equation}

\paragraph{Bounding the terms $\mds C^3[u,\bt]$ and $\mds C^4[\bt,\bt]$ \\}

We now consider the commutator terms $\mds C^3[u,\bt]$ and $\mds C^4[\bt,\bt]$. We recall here their definitions:
\begin{align*}
&\mds C^3[u,\bt]\,:=\,\sum_{j \in \Z} 2^{2js} \int_{\Omega}\comm{\Delta_j}{\frac{\bt}{\o}\,\D u} : \D u \, \Delta_j \bt \,\dx \\
&\qquad\qquad\qquad\qquad\qquad \mbox{ and }\qquad\qquad
\mds C^4[\bt,\bt]\,:=\,\sum_{j \in \Z} 2^{2js} \int_{\Omega}\comm{\Delta_j}{\frac{\bt}{\o}\,\nabla \bt} : \nabla\bt \, \Delta_j \bt \,\dx\,.
\end{align*}

Let us focus on $\mds C^3[u,\bt]$ first. Proceeding as above, thanks to the Yound inequality and Lemma \ref{l:comm} we find
\begin{align}
\label{est:C^3_part}
\left|\mds C^3[u,\bt]\right|\,&\lesssim\,\left(\sum_{j\in\Z}2^{2js}\,\norm{\comm{\Delta_j}{\frac{\bt}{\o}\,\D u}:\D u}_{L^2}^2\right)^{1/2}\,
\left(\sum_{j\in\Z}2^{2js}\,\norm{\Delta_j\bt}_{L^2}^2\right)^{1/2} \\
\nonumber
&\lesssim\,\left(\norm{\nabla\left(\frac{\bt}{\o}\,\D u\right)}_{L^\infty}\,\norm{u}_{H^s}\,+\,
\norm{\frac{\bt}{\o}\,\D u}_{H^s}\,\left\|\nabla u\right\|_{L^\infty}\right)\,\norm{\bt}_{\dot H^s}\,.
\end{align}
At this point, on the one hand we observe that
\begin{align*}
\norm{\nabla\left(\frac{\bt}{\o}\,\D u\right)}_{L^\infty}\,&\lesssim\,\frac{1}{\sqrt{\o_{\min}}}\,
\norm{\nabla\left(\frac{\bt}{\sqrt{\o}}\,\D u\right)}_{L^\infty}\,+\,\frac{1}{(\o_{\min})^{3/2}}\,\norm{\nabla\o}_{L^\infty}\,
\norm{\frac{\bt}{\sqrt{\o}}\,\D u}_{L^\infty} \\
&\lesssim\,\frac{1}{\sqrt{\o_{\min}}}\,\norm{\nabla\left(\frac{\bt}{\sqrt{\o}}\,\D u\right)}_{L^\infty}\,+\,
\frac{1}{(\o_{\min})^{2}}\,\norm{\bt}_{L^\infty}\,\norm{\big(\nabla u,\nabla\o\big)}^2_{L^\infty}
\end{align*}
and, on the other hand, we use product rules of Lemma \ref{l:prod} and Proposition \ref{p:key} to bound
\begin{align*}
\norm{\frac{\bt}{\o}\,\D u}_{H^s}\,&\lesssim\,\norm{\frac{1}{\sqrt{\o}}}_{L^\infty}\,\norm{\frac{\bt}{\sqrt{\o}}\,\D u}_{H^s}\,+\,
\norm{\frac{1}{\sqrt{\o}}}_{H^s}\,\norm{\frac{\bt}{\sqrt{\o}}\,\D u}_{L^\infty} \\
&\lesssim\,\frac{1}{\sqrt{\o_{\min}}}\,\left(\sum_{j\in\Z}2^{2js}\,\norm{\frac{\bt}{\sqrt{\o}}\,\Delta_j\D u}_{L^2}^2\right)^{1/2} \\
&\quad +\,
\frac{1}{\sqrt{\o_{\min}}}\,\left(\norm{\nabla u}_{L^\infty}\,\norm{\frac{\bt}{\sqrt{\o}}}_{H^s}\,+\,\norm{\frac{\bt}{\sqrt{\o}}}_{L^\infty}\,\norm{u}_{H^s}\,+\,
\norm{\bt}_{L^\infty}\,
\norm{\nabla u}_{L^\infty}\,\norm{\frac{1}{\sqrt{\o}}}_{H^s}\right)\,.
\end{align*}
Owing to Lemma \ref{l:comp}, we can further bound
\begin{align}
\label{est:1/o} 
 \norm{\frac{1}{\sqrt{\o}}}_{H^s}\,&\lesssim\,\frac{1+(\o_{\min})^{[s]+1}}{(\o_{\min})^{[s]+3/2}}\,\left(1+\norm{\nabla\o}_{L^\infty}^{[s]}\right)\,\norm{\o}_{H^s} \\
\label{est:b/o}
\norm{\frac{\bt}{\sqrt{\o}}}_{H^s}\,&\lesssim\,\frac{1}{\sqrt{\o_{\min}}}\,\norm{\bt}_{H^s}\,+\,
\norm{\bt}_{L^\infty}\,\frac{1+(\o_{\min})^{[s]+1}}{(\o_{\min})^{[s]+3/2}}\,\left(1+\norm{\nabla\o}_{L^\infty}^{[s]}\right)\,\norm{\o}_{H^s}\,.
\end{align}
In the end, putting all those inequalities into \eqref{est:C^3_part} and using that, by virtue of \eqref{eq:obound}, one has $\o_{\min}\approx (1+t)^{-1}$,
by careful computations we gather
\begin{align}
\label{est:C^3_fin}
\left|\mds C^3[u,\bt]\right|\,&\lesssim\,(1+t)^{1/2}\,\norm{\nabla\left(\frac{\bt}{\sqrt{\o}}\,\D u\right)}_{L^\infty}\,\mbf E_s\,+\,
(1+t)^{1/2}\,\norm{\nabla u}_{L^\infty}\,\sqrt{\mbf E_s}\,\sqrt{F_s}
\\
\nonumber
&\qquad +\,
(1+t)^{[s]+2}\,\left(1+\norm{\bt}_{L^\infty}\right)\,\left(1+\norm{\big(\nabla u,\nabla\o\big)}^{[s]+2}_{L^\infty}\right)\,\mbf E_s
\,.
\end{align}
Remark that the term
\[
 \norm{\nabla\left(\frac{\bt}{\sqrt{\o}}\,\D u\right)}_{L^\infty}
\]
cannot be decomposed further, as we do not know, at our level of regularity, whether $\nabla^2u$ is bounded or not in $L^\infty$.

In a completely analogous way, we also find
\begin{align}
\label{est:C^4_fin}
\left|\mds C^4[\bt,\bt]\right|\,&\lesssim\,(1+t)^{1/2}\,\norm{\nabla\left(\frac{\bt}{\sqrt{\o}}\,\nabla \bt\right)}_{L^\infty}\,\mbf E_s\,+\,
(1+t)^{1/2}\,\norm{\nabla\bt}_{L^\infty}\,\sqrt{\mbf E_s}\,\sqrt{F_s}
\\
\nonumber
&\qquad +\,
(1+t)^{[s]+2}\,\left(1+\norm{\bt}_{L^\infty}\right)\,\left(1+\norm{\big(\nabla \bt,\nabla\o\big)}^{[s]+2}_{L^\infty}\right)\,\mbf E_s
\,.
\end{align}

\paragraph{Bounding the terms $\mds C^2[f,f]$ \\}

As a last step, we bound the terms of the form $\mds C^2[f,f]$, for any given $f\in\{u,\o,\bt\}$. Recall that
\begin{align*}
 \mds C^2[f,f]\,=\,\sum_{j \in \Z} 2^{2js}\int_{\Omega}\mds C^2_{f,j}\, \Delta_jf \,\dx\,,\qquad\qquad \mbox{ with }\qquad
 \mds C^2_{f,j}\,=\,\div\left(\comm{\Delta_j}{\frac{\beta^2}{\o}}\,\nabla f\right)\,.
\end{align*}
Recall the convention we adopted above: when $f=u$, the term $\nabla f$ has to be replaced by $\D u$ in the definition of $\mds C^2_{u,j}$.

As it appears clear by \tsl{e.g.} performing an integration by parts inside the integral term, it is not possible to control
$\mds C^2[f,f]$ in a simple way by a direct use of Lemma \ref{l:comm}. This lack of control is essentially due to the degeneracy of the system when $\bt\approx0$.
In order to overcome this problem, the basic idea is to take advantage of the \emph{degenerate} parabolic smoothing, represented by the
term $F_s$ in the estimates. For doing so, we must ``distribute'' enough powers of the viscosity coefficient
$\bt/\sqrt{\o}$ to the terms presenting spatial derivatives: this requires to find a suitable commutator structure.

So, let us start performing careful computations in order to find the sought commutator structure.
To begin with, we observe that we can write
\begin{align}
\label{eq:C^2_dec-1}
\sum_{j \in \Z} 2^{2js}  \int_{\Omega}	\mds C^2_{f,j}\, \Delta_jf\,   \dx \,&=\,\mc T_1 \,+\,
\sum_{j \in \Z} 2^{2js}  \int_{\Omega}	\div  \left(   \comm{ \Delta_j}{\frac{\beta}{\sqrt \o}}\,\left(\frac{\bt}{\sqrt \o}\,\nabla f\right)\right)\,\Delta_jf\,\dx\,,
\end{align}
where we have defined
\begin{align*}
\mc T_1\,&:=\,\sum_{j \in \Z} 2^{2js}  \int_{\Omega}\div\left(\frac{\bt}{\sqrt{\o}}\,\comm{ \Delta_j}{\frac{\beta}{\sqrt \o}} \nabla f\right)\, \Delta_jf\,\dx\,.
\end{align*}
Observe that, after an integration by parts, the term $\mc T_1$ can be easily estimated by use of Lemma \ref{l:comm}. Indeed, thanks also
to the Cauchy-Schwarz inequalty, we have
\begin{align*}
\left|\mc T_1\right|\,&=\,
\left|\sum_{j \in \Z} 2^{2js}  \int_{\Omega}\frac{\bt}{\sqrt{\o}}\,\comm{ \Delta_j}{\frac{\beta}{\sqrt \o}} \nabla f\,\cdot\, \nabla \Delta_jf\,\dx\right| \\
&\lesssim\,\sum_{j \in \Z} 2^{2js}\,\norm{\frac{\bt}{\sqrt{\o}}\, \nabla\Delta_jf}_{L^2}\;\norm{ \comm{ \Delta_j}{\frac{\beta}{\sqrt \o}}\,\nabla f}_{L^2}
\,\lesssim\,\sqrt{F_s}\,\left(\sum_{j \in \Z} 2^{2js}\,\norm{ \comm{ \Delta_j}{\frac{\beta}{\sqrt \o}}\,\nabla f}^2_{L^2}\right)^{1/2} \\
&\lesssim\,\sqrt{F_s}\,\left(\norm{\nabla\left(\frac{\bt}{\sqrt{\o}}\right)}_{L^\infty}\,\norm{f}_{H^s}\,+\,
\norm{\nabla f}_{L^\infty}\,\norm{\frac{\bt}{\sqrt{\o}}}_{H^s}\right)^{1/2}
\end{align*}
Keeping in mind inequality \eqref{est:b/o} and the definition of $\o_{\min}$, in turn we get
\begin{align} \label{est:T_1}
\left|\mc T_1\right|\,&\lesssim\,\sqrt{F_s}\;\sqrt{\mbf E_s}\,(1+t)^{\frac{[s]}{2}+\frac{3}{4}}\,\left(1+\norm{\bt}_{L^\infty}\right)^{1/2}\,
\left(1+\norm{\big(\nabla u,\nabla\o,\nabla\bt\big)}_{L^\infty}^{[s]+1}\right)^{1/2}\,.
\end{align}

On the contrary, it is clear that a similar approach cannot work with the second term appearing on the right-hand side of equality \eqref{eq:C^2_dec-1}, because, after
an integration by parts, we would have no coefficients $\bt/\sqrt{\o}$ to put in front of $\nabla\Delta_jf$. Thus, the only solution is to explicitly
compute the divergence: we find
\begin{align}
\label{eq:C^2_dec-2}
&\sum_{j \in \Z} 2^{2js}  \int_{\Omega}	\div  \left(   \comm{ \Delta_j}{\frac{\beta}{\sqrt \o}}\,\left(\frac{\bt}{\sqrt \o}\,\nabla f\right)\right)\,\Delta_jf\,\dx \\
\nonumber
&\qquad\qquad\qquad\qquad\qquad \,=\,
\mc T_2\,+\,\sum_{j \in \Z}2^{2js}\int_{\Omega}\comm{ \Delta_j}{\nabla \left(\frac{\beta}{\sqrt \o} \right) }  \left(  \frac{\bt}{\sqrt{\o}} \nabla f \right)
\; \Delta_jf\,  \dx\,,
\end{align}
where this time we have set
\[
\mc T_2\,:=\,\sum_{j \in \Z} 2^{2js}\int_{\Omega}\left( \comm{ \Delta_j}{\frac{\beta}{\sqrt \o}} \div\left(  \frac{\bt}{\sqrt{\o}}\nabla f\right)\right)\Delta_jf\,\dx\,.
\]
The term $\mc T_2$ can be controlled in a quite direct way: by applying the Cauchy-Schwarz inequality and Lemma \ref{l:comm}, or rather of
inequality \eqref{est:comm3}, we infer that
\begin{align*}
\left|\mc T_2\right|\,&\lesssim\,
\left(\sum_{j \in \Z} 2^{2js}\,\norm{ \comm{ \Delta_j}{\frac{\beta}{\sqrt \o}}\,\div\left(\frac{\bt}{\sqrt{\o}}\,\nabla f \right)}_{L^2}^2\right)^{1/2}\,\left( \sum_{j \in \Z} 2^{2js}\,\norm{\Delta_jf  }_{L^2}^2 \right)^{1/2} \\
&\lesssim\,\left(  \norm{ \nabla \left( \frac{\bt}{\sqrt \o}\right)}_{L^\infty}\,\norm{\frac{\bt}{\sqrt \o}\,\nabla f }_{H^s}\,+\,
\norm{ \nabla \left(\frac{\bt}{\sqrt \o}\,\nabla f\right) }_{L^\infty } \,\norm{\frac{\bt}{\sqrt \o}}_{H^s} \right)\,\sqrt{\mbf E_s} \\
&\lesssim\,\norm{ \nabla \left( \frac{\bt}{\sqrt \o}\right)}_{L^\infty}\,\norm{\frac{\bt}{\sqrt \o}\,\nabla f }_{H^s}\,\sqrt{\mbf E_s} \\
&\qquad\qquad\qquad +\,
(1+t)^{[s]+\frac{3}{2}}\,\left(1+\norm{\bt}_{L^\infty}\right)\,\left(1+\norm{\nabla\o}_{L^\infty}^{[s]}\right)
\norm{ \nabla \left(\frac{\bt}{\sqrt \o}\,\nabla f\right) }_{L^\infty }\,\mbf E_s\,.
\end{align*}
At this point, Proposition \ref{p:key} enters into play in a fundamental way. Indeed, a direct application of this result allows us to bound
\begin{align}
\label{est:from-key-Prop}
\norm{\frac{\bt}{\sqrt \o}\,\nabla f }_{H^s}\,&\lesssim\,\left(\sum_{j\in\Z}2^{2js}\norm{\Delta_j\left(\frac{\bt}{\sqrt \o}\,\nabla f\right) }_{L^2}^2\right)^{1/2} \\
\nonumber
&\qquad\qquad
+\norm{\nabla f}_{L^\infty}\norm{\frac{\bt}{\sqrt{\o}}}_{H^s}+\norm{\nabla\left(\frac{\bt}{\sqrt{\o}}\right)}_{L^\infty}\norm{f}_{H^s} \\
\nonumber
&\lesssim\,\sqrt{F_s}\,+\,\sqrt{\mbf E_s}\,(1+t)^{[s]+\frac{3}{2}}\,
\left(1+\norm{\bt}_{L^\infty}\right)\,\left(1+\norm{\big(\nabla u,\nabla\o,\nabla\bt\big)}_{L^\infty}^{[s]}\right)\,.
\end{align}
This estimate in turn implies that
\begin{align}
\label{est:T_2}
\left|\mc T_2\right|\,&\lesssim\,(1+t)^{\frac{3}{2}}\,\left(1+\norm{\bt}_{L^\infty}\right)\,\norm{\big(\nabla\o,\nabla\bt\big)}_{L^\infty}\,\sqrt{F_s}\,\sqrt{\mbf E_s} \\
\nonumber
&\qquad +\,(1+t)^{[s]+3}\,\left(1+\norm{\bt}^2_{L^\infty}\right)\,\left(1+\norm{\big(\nabla u,\nabla\o,\nabla\bt\big)}_{L^\infty}^{[s]+1}\right)\,\mbf E_s \\
\nonumber
&\qquad\qquad +\,(1+t)^{[s]+\frac{3}{2}}\,\left(1+\norm{\bt}_{L^\infty}\right)\,\left(1+\norm{\nabla\o}_{L^\infty}^{[s]}\right)
\norm{ \nabla \left(\frac{\bt}{\sqrt \o}\,\nabla f\right) }_{L^\infty }\,\mbf E_s\,.
\end{align}

The other term appearing in \eqref{eq:C^2_dec-2}, instead, needs a further decomposition.
As a matter of fact, we notice that we can write it
\[
\sum_{j \in \Z}2^{2js}\int_{\Omega}\comm{ \Delta_j}{\nabla \left(\frac{\beta}{\sqrt \o} \right) }  \left(  \frac{\bt}{\sqrt{\o}} \nabla f \right)
\; \Delta_jf\,  \dx\,=\,\mc T_3\,+\,\mc T_4\,,
\]
where we have defined
\begin{align*}
\mc T_3\,&:=\,\sum_{j \in \Z}2^{2js}\int_{\Omega}\comm{ \Delta_j}{\frac{\bt}{\sqrt{\o}} \,\nabla \left(\frac{\beta}{\sqrt \o} \right) }\cdot\nabla f
\; \Delta_jf\,  \dx\,, \\
\mc T_4\,&:=\,\sum_{j \in \Z}2^{2js}\int_{\Omega}\nabla\left(\frac{\beta}{\sqrt \o}\right)\cdot\left(\comm{\frac{\bt}{\sqrt{\o}} }{ \Delta_j}\,\nabla f\right)
\; \Delta_jf\,  \dx\,.
\end{align*}
The advantage is that we can now estimate $\mc T_3$ and $\mc T_4$ as above, by use of the Cauchy-Schwarz inequality and of Lemma \ref{l:comm}.
Therefore we get
\begin{align*}
\left|\mc T_3\right|\,&\lesssim\,\left(\norm{\nabla  \left( \frac{\bt}{\sqrt{\o}}\, \nabla \left(  \frac{\bt}{\sqrt{\o}} \right)  \right)  }_{L^\infty } \norm{f}_{H^s} +
\norm{  \frac{\bt}{\sqrt{\o}}\, \nabla \left(  \frac{\bt}{\sqrt{\o}} \right)    }_{H^s }\, \norm{\nabla f}_{L^\infty}\right)\,\norm{f}_{\dot H^s} \\
\left|\mc T_4\right|\,&\lesssim\,\norm{\nabla \left(\frac{\bt}{\sqrt \o}\right)}_{L^\infty}\, \norm{f}_{\dot H^s}\,
\left( \norm{\nabla\left(\frac{\bt}{\sqrt \o}\right)}_{L^\infty}\, \norm{f}_{H^s} + \norm{ \frac{\bt}{\sqrt \o}}_{H^s}\, \norm{\nabla f }_{L^\infty }  \right)\,.
\end{align*}
Now, it is a long but fairly straightforward computation to see that
\begin{align*}
\norm{\nabla  \left( \frac{\bt}{\sqrt{\o}}\, \nabla \left(  \frac{\bt}{\sqrt{\o}} \right)  \right)  }_{L^\infty } \,&\lesssim\,
(1+t)^{\frac{3}{2}}\,\left(1+\norm{\bt}_{L^\infty}\right)\,
\left(\norm{\nabla\left(\frac{\bt}{\sqrt{\o}}\, \nabla\bt\right)}_{L^\infty}\,+\,\norm{\nabla\left(\frac{\bt}{\sqrt{\o}}\, \nabla\o\right)}_{L^\infty}\right) \\
&\qquad +\,(1+t)^3\,\left(1+\norm{\bt}_{L^\infty}^2\right)\,\norm{\big(\nabla\o,\nabla\bt\big)}_{L^\infty}^2\,.
\end{align*}
Similarly, we can bound
\begin{align*}
\norm{  \frac{\bt}{\sqrt{\o}}\, \nabla \left(  \frac{\bt}{\sqrt{\o}} \right)    }_{H^s }\,&\lesssim\,
(1+t)^{\frac{3}{2}}\,\left(1+\norm{\bt}_{L^\infty}\right)\sum_{g\in \{\o,\bt\}}\norm{\frac{\bt}{\sqrt{\o}}\,\nabla g}_{H^s} \\
&\qquad +\,(1+t)^2\,\norm{\bt}_{L^\infty}\,\norm{\big(\nabla\o,\nabla\bt\big)}_{L^\infty}\,\norm{\left(\bt,\frac{1}{\sqrt{\o}}\right)}_{H^s} \\
&\qquad\qquad+\,
(1+t)^{\frac{1}{2}}\,\norm{\bt}^2_{L^\infty}\,\norm{\nabla\o}_{L^\infty}\,\norm{\frac{1}{\o^{3/2}}}_{H^s}\,,
\end{align*}
which yields, by use of \eqref{est:1/o}, of inequality \eqref{est:from-key-Prop} and of Lemma \ref{l:prod}, the estimate
\begin{align*}
\norm{  \frac{\bt}{\sqrt{\o}}\, \nabla \left(  \frac{\bt}{\sqrt{\o}} \right)    }_{H^s }\,&\lesssim\,
(1+t)^{\frac{3}{2}}\,\left(1+\norm{\bt}_{L^\infty}\right)\,\sqrt{F_s} \\
&\qquad +\,(1+t)^{[s]+4}\,\left(1+\norm{\bt}^2_{L^\infty}\right)\,\left(1+\norm{\big(\nabla\o,\nabla\bt\big)}_{L^\infty}^{[s]+1}\right)\,\sqrt{\mbf E_s}\,.
\end{align*}
Putting all these inequalities together, we infer that
\begin{align}
\label{est:T_3}
\left|\mc T_3\right|\,&\lesssim\,(1+t)^{\frac{3}{2}}\,\left(1+\norm{\bt}_{L^\infty}\right)\,\left(\sum_{g\in\{\o,\bt\}}
\norm{\nabla\left(\frac{\bt}{\sqrt{\o}}\, \nabla g\right)}_{L^\infty}\right)\,\mbf E_s \\
\nonumber
&\qquad + \,(1+t)^{[s]+4}\,\left(1+\norm{\bt}^2_{L^\infty}\right)\,\left(1+\norm{\big(\nabla\o,\nabla\bt\big)}_{L^\infty}^{[s]+2}\right)\,\mbf E_s \\
\nonumber
&\qquad\qquad +\,
(1+t)^{\frac{3}{2}}\,\left(1+\norm{\bt}_{L^\infty}\right)\,\left(1+\norm{\big(\nabla u,\nabla\o,\nabla\bt\big)}_{L^\infty}\right)\,\sqrt{F_s}\,\sqrt{\mbf E_s}\,.
\end{align}
As for $\mc T_4$, by direct computations and using inequality \eqref{est:b/o} again, we obtain
\begin{align}
\label{est:T_4}
\left|\mc T_4\right|\,&\lesssim\,\mbf E_s\,(1+t)^3\,\left(1+\norm{\bt}_{L^\infty}^2\right)\,\norm{\big(\nabla\o,\nabla\bt\big)}_{L^\infty}^2 \\
\nonumber
&\qquad
+\,\mbf E_s\,(1+t)^{[s]+3}\,\left(1+\norm{\bt}_{L^\infty}^2\right)\,\left(1+\norm{\big(\nabla u,\nabla\o,\nabla\bt\big)}_{L^\infty}^{[s]+2}\right) \\
\nonumber
&\lesssim\,\mbf E_s\,(1+t)^{[s]+3}\,\left(1+\norm{\bt}_{L^\infty}^2\right)\,\left(1+\norm{\big(\nabla u,\nabla\o,\nabla\bt\big)}_{L^\infty}^{[s]+2}\right)\,.
\end{align}

\medbreak
All in all, we have written
\[
 \mds C^2[f,f]\,=\,\sum_{j \in \Z} 2^{2js}\int_{\Omega}\mds C^2_{f,j}\, \Delta_jf \,\dx\,=\,\sum_{j=1}^4\mc T_j\,,
\]
where the terms $\mc T_1,\ldots \mc T_4$ satisfy the bounds \eqref{est:T_1}, \eqref{est:T_2}, \eqref{est:T_3} and \eqref{est:T_4}. In particular, this implies that,
after an application of the Young inequality, for any $\de>0$ to be fixed later and for any $f\in\{u,\o,\bt\}$, we have
\begin{align}
\label{est:C^2}
\sum_{f\in\{u,\o,\bt\}}\left|\mds C^2[f,f]\right|\,&\leq\,3\,\de\,F_s\,+\,C\,(1+t)^{[s]+4}\,\left(1+\norm{\bt}^2_{L^\infty}\right)\,
\left(1+\norm{\big(\nabla u,\nabla\o,\nabla\bt\big)}_{L^\infty}^{[s]+2}\right)\,\mbf E_s \\
\nonumber
&\qquad +\,C\,(1+t)^{[s]+\frac{3}{2}}\,\left(1+\norm{\bt}_{L^\infty}\right)\,\left(1+\norm{\nabla\o}_{L^\infty}^{[s]}\right) \\
\nonumber
&\qquad\qquad\qquad\qquad\qquad
\times\,\left(\sum_{G\in\{\D u,\nabla\o,\nabla\bt\}}\norm{ \nabla \left(\frac{\bt}{\sqrt \o}\,G\right) }_{L^\infty }\right)\,\mbf E_s\,,
\end{align}
where the multiplicative constant $C>0$ only depends on $\de$ and on the various parameters of the problem, but not on the solution.

\subsection{End of the argument} \label{ss:closing-est}

At this point, to conclude our argument and close the estimates in some (possibly small) time interval $[0,T]$, we have to insert inequalities
\eqref{est:C^1}, \eqref{est:C^3_fin}, \eqref{est:C^4_fin} and \eqref{est:C^2} into estimate \eqref{est:tot-E_partial}.

First of all, using again the Young inequality at the right place, we see that
\begin{align*}
&\left|\int^t_0\Big(\sum_{f\in\{u,\o,\bt\}}\left(\mds C^1[f,f]+\mds C^2[f,f]\right)+\mds C^3[u,\bt]+\mds C^4[\bt,\bt]\Big)\,\dd\t\right| \\
&\;\leq\,5\,\de\,\int^t_0F_s\,\dd\t\,+\,C\int^t_0(1+\t)^{[s]+4}\,\left(1+\norm{\bt}^2_{L^\infty}\right)\,
\left(1+\norm{\big(\nabla u,\nabla\o,\nabla\bt\big)}_{L^\infty}^{[s]+2}\right)\,\mbf E_s\,\dd\t \\
&\quad +\,C\int^t_0(1+\t)^{[s]+\frac{3}{2}}\,\left(1+\norm{\bt}_{L^\infty}\right)\,\left(1+\norm{\nabla\o}_{L^\infty}^{[s]}\right)
\left(\sum_{G\in\{\D u,\nabla\o,\nabla\bt\}}\norm{ \nabla \left(\frac{\bt}{\sqrt \o}\,G\right) }_{L^\infty }\right)\,\mbf E_s\,\dd\t\,,
\end{align*}
for a new ``universal'' constant $C>0$.

In view of this bound, using \eqref{eq:obound} for controlling $\o$ in $L^\infty$ and taking $\de>0$ small enough, from inequality \eqref{est:tot-E_partial} we get
\begin{align}
\label{est:tot-E_part2}
\mbf E_s(t)\,+\,\int^t_0F_s(\t)\,\dd\t\,&\leq\,C_1\,\mbf E_s(0)\,+\,C_2\int^t_0\,\Big(\xi(\t)+\Lambda(\t)\Big)\,\mbf E_s(\t)\,\dd\t\,,
\end{align}
where, for $t\geq0$, we have defined the functions
\begin{align*}
 \xi(t)\,&:=\,(1+t)^{[s]+4}\,\left(1+\norm{\bt}^2_{L^\infty}\right)\,\left(1+\norm{\big(\nabla u,\nabla\o,\nabla\bt\big)}_{L^\infty}^{[s]+2}\right)\,, \\
\Lambda(t)\,&=\,(1+t)^{[s]+\frac{3}{2}}\,\left(1+\norm{\bt}_{L^\infty}\right)\,\left(1+\norm{\nabla\o}_{L^\infty}^{[s]}\right)
\left(\sum_{G\in\{\D u,\nabla\o,\nabla\bt\}}\norm{ \nabla \left(\frac{\bt}{\sqrt \o}\,G\right) }_{L^\infty }\right)\,.
\end{align*}
Recall that the constants $C_1>0$ and $C_2>0$ only depend on $\big(d,s,\nu,\alpha_1,\ldots \alpha_4,\o_*,\o^*)$, but not on the solution.

Thus, an application of the Gr\"onwall inequality yields
\[
\forall\, t\geq0\,,\qquad \mbf E_s(t)\,\leq\,C_1\,\mbf E_s(0)\,\exp\left(C_2\int^t_0\Big(\xi(\t)\,+\,\Lambda(\t)\Big)\,\dd\t\right)\,.
\]
Coming back to \eqref{est:tot-E_part2} and using the previous bound, we discover that
\begin{equation}
\label{est:tot-E}
\forall\,t\geq0\,,\qquad \mbf E_s(t)\,+\,\int^t_0F_s(\t)\,\dd\t\,\leq\,C_1\,\mbf E_s(0)\,
\left(1\,+\,\exp\left(C_2\int^t_0\Big(\xi(\t)\,+\,\Lambda(\t)\Big)\,\dd\t\right)\right)\,.
\end{equation}

Concluding the argument from this inequality is now a standard matter.
Let us define the time $T>0$ as
\begin{equation*}
T\,:=\,\sup\left\{t>0\;\Big|\qquad C_2\int^t_0\Big(\xi(\t)\,+\,\Lambda(\t)\Big)\,\dd\t\,\leq\,\log 2\right\}\,.
\end{equation*}
Then, from \eqref{est:tot-E} we deduce that
\begin{equation}
\label{est:tot-E_T}
\forall\,t\in[0,T]\,,\qquad \mbf E_s(t)\,+\,\int^t_0F_s(\t)\,\dd\t\,\leq\,3\,C_1\,\mbf E_s(0)\,.
\end{equation}
Now, on the one hand, we want to deduce a suitable lower bound for $T$ and, on the other hand, we want to establish a continuation criterion
for solutions to system \eqref{eq:kolm_d}.

\paragraph{Lower bound for the lifespan \\}

By definition of the functions $\xi(t)$ and $\Lambda(t)$ and Sobolev embeddings, it is easy to see that, for any time $t\geq0$, one has
\begin{align*}
\xi(t)\,&\lesssim\,(1+t)^{[s]+4}\,\left(1+\mbf E_s(t)\right)^{[s]+4} \\ 
\Lambda(t)\,&\lesssim\,(1+t)^{[s]+\frac{3}{2}}\,\left(1+\norm{\bt}_{L^\infty}\right)\,\left(1+\norm{\nabla\o}_{L^\infty}^{[s]}\right)
\left(\sum_{G\in\{\D u,\nabla\o,\nabla\bt\}}\norm{\left(\frac{\bt}{\sqrt \o}\,G\right) }_{H^s}\right) \\
&\lesssim(1+t)^{[s]+\frac{3}{2}}\,\left(1+\norm{\bt}_{L^\infty}\right)\,\left(1+\norm{\nabla\o}_{L^\infty}^{[s]}\right)\,\sqrt{F_s(t)} \\
&\qquad\qquad\qquad\qquad\qquad\qquad \,+\,
(1+t)^{2[s]+3}\,\left(1+\norm{\bt}^2_{L^\infty}\right)\,\left(1+\norm{\nabla\o}_{L^\infty}^{2[s]}\right)\,\sqrt{\mbf E_s}  \\
&\leq\,\de\,F_s(t)\,+\,\dfrac{C}{\de}
(1+t)^{2[s]+3}\,\big(1+\mbf E_s(t)\big)^{2[s]+3}\,,
\end{align*}
where we have also used inequality \eqref{est:from-key-Prop} for controlling $\Lambda(t)$. Notice that the previous inequality holds true for any fixed $\de>0$
and that the constant $C>0$ depends all the parameters of the problem.
Therefore, from estimate \eqref{est:tot-E_T} we deduce that, for any $t\in[0,T]$, one must have
\[
\int^t_0\Big(\xi(\t)\,+\,\Lambda(\t)\Big)\,\dd\t\,\leq\,\frac{K_1}{\de}\,t\,(1+t)^{2[s]+3}\,\big(1+\mbf E_s(0)\big)^{2[s]+3}\,+\,\de\,K_2\,\mbf E_s(0)\,,
\]
for suitable positive constants $K_1$ and $K_2$, only depending on the parameters of the problem.
In particular, by definition of $T$, at time $t=T$ it must hold
\begin{equation} \label{est:T-life}
\frac{K_1}{\de}\,T\,(1+T)^{2[s]+3}\,\big(1+\mbf E_s(0)\big)^{2[s]+3}\,+\,\de\,K_2\,\mbf E_s(0)\,\geq\,K_3\,,
\end{equation}
where $K_3\,=\,\log 2/C_2$ only depends on $\big(d,s,\nu,\alpha_1,\ldots \alpha_4,\o_*,\o^*\big)$. 
At this point, we choose
\[
\de\,=\,\frac{K_3}{2\,K_2\,\mbf E_s(0)}\,.
\]
Then, from \eqref{est:T-life} we obtain that
\begin{equation} \label{est:ineq-for-T}
\frac{2}{K_3}\,K_1\,K_2\,\mbf E_s(0)\,T\,(1+T)^{2[s]+3}\,\big(1+\mbf E_s(0)\big)^{2[s]+3}\,\geq\,\frac{K_3}{2}\,.
\end{equation}

At this point, if $T\geq 1$ we are done. So, let us assume that $T\leq1$. Then the previous estimate in particular implies that
\[
\mbf E_s(0)\,T\,\big(1+\mbf E_s(0)\big)^{2[s]+3}\,\geq\,K_0\qquad\qquad \Longrightarrow\qquad\qquad 
T\,\geq\,\frac{K_0}{\mbf E_s(0)\,\big(1+\mbf E_s(0)\big)^{2[s]+3}}\,,
\]
for a suitable ``universal'' constant $K_0$. This proves the lower bound on the lifespan of the solution claimed in Theorem \ref{t:cont}.

\paragraph{Continuation criterion \\}
Let us now turn our attention to the proof of the continuation criterion. This part will conclude the proof of Theorem \ref{t:cont}.

To begin with, we remark that, as a direct consequence of inequality \eqref{est:tot-E}, we have the following claim: given a time $0<T<+\infty$,
if one has
\begin{equation} \label{est:cont_1}
\int^T_0\Big(\xi(t)\,+\,\Lambda(t)\Big)\,\dt\,<\,+\infty\,,
\end{equation}
then the solution remains bounded in $H^s$ on the time interval $[0,T]$. Hence, by classical arguments, this solution may be continued beyond the time $T$
into a solution possessing the same regularity.

The previous argument already provides us with a first continuation criterion. Our goal now is to refine it, in order to match the one claimed in Theorem \ref{t:cont}.
As a matter of fact, as $T<+\infty$ has finite value by assumption, it is easy to remove the factors $(1+t)$ from that criterion and
see that condition \eqref{est:cont_1} holds true if and only if
\[
 \int^T_0\Big(\wtilde \xi(t)\,+\,\wtilde\Lambda(t)\Big)\,\dt\,<\,+\infty\,,
\]
where this time we have defined
\begin{align*}
\wtilde \xi(t)\,&:=\,\left(1+\norm{\bt}^2_{L^\infty}\right)\,\left(1+\norm{\big(\nabla u,\nabla\o,\nabla\bt\big)}_{L^\infty}^{[s]+2}\right)\,, \\
\wtilde\Lambda(t)\,&=\,\left(1+\norm{\bt}_{L^\infty}\right)\,\left(1+\norm{\nabla\o}_{L^\infty}^{[s]}\right)
\left(\sum_{G\in\{\D u,\nabla\o,\nabla\bt\}}\norm{ \nabla \left(\frac{\bt}{\sqrt \o}\,G\right) }_{L^\infty }\right)\,.
\end{align*}

In addition, using the Poincar\'e-Wirtinger type inequality \eqref{est:interp_2} and estimate \eqref{est:bl2}, we see that we can bound
\begin{align*}
\int_0^{T} \norm{\bt}_{L^\infty}^2\,\dt\, &\lesssim\,\int_0^{T}\left( \oline{\bt}^2\,+\,
\left\| \bt \right\|_{L^2}^{4/(d+2)}\;\left\| \nabla \bt \right\|_{L^\infty}^{2d/(d+2)}  \right)\, \dd\t \\
&\lesssim\, \norm{\bt_0}_{L^2}^2\, T\, +\,C\left(\norm{\big(u_0,\bt_0\big)}_{L^2}\right)\,\int_0^{T} \norm{\nabla \bt }_{L^\infty}^{2d/(d+2)}\,\dd\t\,. 
\end{align*}
Therefore, as $2d/(d+2)\,<\,2$ on the one hand we can bound
\[
\int^T_0\wtilde\xi(t)\,\dt\,\lesssim\,\int^T_0\left(1+\norm{\big(\nabla u,\nabla\o,\nabla\bt\big)}_{L^\infty}^{[s]+4}\right)\,\dt
\]
and, on the other hand, we also have
\[
 \int^T_0\wtilde\Lambda(t)\,\dt\,\lesssim\,\int^T_0\left(1+\norm{\nabla\bt}_{L^\infty}\right)\,\left(1+\norm{\nabla\o}_{L^\infty}^{[s]}\right)
\left(\sum_{G\in\{\D u,\nabla\o,\nabla\bt\}}\norm{ \nabla \left(\frac{\bt}{\sqrt \o}\,G\right) }_{L^\infty }\right)\,\dt\,.
\]
From this last bounds, the continuation criterion of Theorem \ref{t:cont} easily follows.

\subsection{Analysis of the pressure} \label{ss:pressure}

Notice that, in our derivation of \tsl{a priori} estimates, the pressure term does not play any role. As a matter of fact, this term simply disappears
in our energy method, because of the orthogonality with $u$, owing to the divergence-free constraint $\div u=0$.

Nonetheless, the pressure gradient $\nabla\pi$ appearing in equations \eqref{eq:kolm_d} as an unknown of the problem, for the sake
of completeness we want to establish its regularity. This is the goal of this subsection.

More precisely, we want to prove that
\[
\nabla\pi\,\in\,L^\infty\big([0,T];H^{s-1}(\Omega)\big)\,,
\]
where $T>0$ is the time defined above, so that inequality \eqref{est:tot-E_T} holds true.

\medbreak
First of all, let us derive an equation for $\nabla\pi$. By taking the divergence of the first equation in \eqref{eq:kolm_d}, we see that the pressure
satisfies the following elliptic problem: 
\begin{equation}\label{elliptic}
	-\Delta \pi\, =\,-\,\nu\,\div\left(  \div \left( \frac{k}{\o} \D u  \right)  - u \cdot \nabla u\right)\,.
\end{equation}
Because the average $\oline{\nabla\pi}=0$ vanishes thanks to periodic boundary conditions, the previous equation implies that
\begin{equation} \label{eq:formula-pi}
\nabla\pi\,=\,-\nu\,\nabla(-\Delta)^{-1}\div\div \left( \frac{k}{\o} \D u  \right)\,+\,\nabla(-\Delta)^{-1}\div\big(u\cdot\nabla u\big)\,.
\end{equation}

Observe that, because of the fact that $\div u=0$, we have $\div\big(u\cdot\nabla u\big)\,=\,\nabla u:\nabla u$. Taking advantage of this cancellation,
one can establish that
\[
\norm{\nabla(-\Delta)^{-1}\div\big(u\cdot\nabla u\big)}_{H^s}\,\lesssim\,\norm{\nabla u:\nabla u}_{H^{s-1}}\,\lesssim\,\norm{u}_{H^s}^2\,,
\]
which is finite uniformly for $t\in[0,T]$.

As for the other term appearing in identity \eqref{eq:formula-pi}, we take advantage of another fundamental cancellation, still deriving from
the divergence-free constraint $\div u=0$. For notational convenience, let us set
\[
\frac{k}{\o}\,=\,\alpha^2\,,\qquad\qquad \mbox{ with }\qquad \alpha\,\in\,L^\infty\big([0,T];H^s(\Omega)\big)\,.
\]
Hence, let us compute
\begin{align*}
\div \div\left(\alpha^2\,\D u\right)\,&=\,\sum_{j,k}\d_j\,\d_k\Big(\alpha^2\,\big(\d_ku_j\,+\,\d_ju_k\big)\Big)\,=\,
\sum_k\d_k\Big(\d_ku\cdot\nabla\alpha^2\Big)\,+\,\sum_j\d_j\Big(\d_ju\cdot\nabla\alpha^2\Big)\,.
\end{align*}
Observe that, as $H^{s-1}(\Omega)$ is a Banach algebra, we have that the product $\d_ku\cdot\nabla\alpha^2$ belongs to $L^\infty\big([0,T];H^{s-1}\big)$. This implies that
\[
\nabla(-\Delta)^{-1}\div\div\left( \frac{k}{\o} \D u  \right)\,\in\,L^\infty\big([0,T];H^{s-1}\big)\,,
\]
thus completing the proof of our claim.

\section{Local existence and uniqueness of solutions} \label{s:proof}

In this section, we perform the proof to Theorem \ref{t:d_wp}. More precisely, starting from the \tsl{a priori} bounds of Section \ref{s:a-priori},
we rigorously derive existence and uniqueness of local in time solutions to system \eqref{eq:kolm_d}, related to given initial data $\big(u_0,\o_0,k_0\big)$.
The existence issue is dealt with in Subsection \ref{ss:exist}, while uniqueness is proved in Subsection \ref{ss:uniqueness}.

Our argument follows the strategy adopted in \cite{F-GB} for treating the $1$-D model. Although most arguments can be reproduced similarly in our context,
we present the proofs here for the sake of completeness. However, as in \cite{F-GB} the proofs are discussed in detail, when appropriate,
we will omit to give the full details and rather refer the reader to that paper.

\subsection{Local existence of solutions} \label{ss:exist}

The proof of existence of a solution is carried out in three main steps. First of all, we construct approximated solutions to system \eqref{eq:kolm_d},
by avoiding the appearing of the ``vacuum region'' $\big\{k_0(x)=0\big\}$.
Then, we show uniform bounds for the family of solutions constructed by approximation, thus convergence (up to extraction) to some target state $\big(u,\o,k\big)$.
Finally, by a compactness argument we prove that the target state $\big(u,\o,k\big)$ is indeed a solution of the original system \eqref{eq:kolm_d}.
Observe that, in the convergence step, we can avoid the appearing of the pressure function $\nabla\pi$, as the weak formulation
of the momentum equation uses divergence-free test functions. A discussion about the regularity of $\nabla\pi$ will arise only at the end of the argument.

Before starting the proof, some notation is in order.
We will deal with sequences of solutions, denoted $(f_\eps)_{\eps}$. Given some normed space $X$, we simply write $(f_\eps)_{\eps} \subset X$ to mean
that the sequence is also \emph{bounded} in that space. If the sequence is not bounded, we will adopt the different notation $\forall\,\veps>0\,,\ f_\veps\in X$.
For simplicity, we will sometimes use the notation $ L^p_T(X) := L^p\big([0,T]; X\big)$.
 
\medbreak
This having been said, let us start the proof of the existence of a solution, given some initial state $\big(u_0,\o_0,k_0\big)$ verifying the assumptions
stated in Theorem \ref{t:d_wp}.

We begin by removing the degeneration created by the possible vanishing of turbulent kinetic energy $k_0$. For this, we lift the initial data: for $0 <\eps < 1$, we define
$$ k_{0,\eps}\, :=\, \left(\sqrt{k_0} + \eps\right)^2\,.$$
From the initial regularity $\sqrt{k_0} \in H^s(\Omega)$, it is easy to see that 
$$ \big(k_{0,\eps}\big)_{\eps} \,\subset\, H^s(\Omega)\,, \qquad\qquad k_{0,\eps}\,\geq\,\veps^2\,>\,0\,. $$
Thus, for any fixed $0<\eps<1$, we can solve the original system \eqref{eq:kolm_d} with respect to the initial datum
$$\big(u_0,\o_0,k_{0,\eps}\big)\, \in\, H^s(\Omega) \times  H^s(\Omega) \times H^s(\Omega)\,,$$
for instance by using Theorem 1 in \cite{Kos} (see also \cite{Kos-Kub}).

Observe that the  solution $\big(u_{\eps},\o_{\eps},k_{\eps}\big)$ in \cite{Kos}-\cite{Kos-Kub}
is constructed \tsl{via} a Galerkin method, hence the approximation of those solutions are smooth
at any step of the Galerkin construction. In particular, for those approximations the computations performed in Section \ref{s:a-priori} are fully justified; we deduce
that the \tsl{a priori} estimates we have established therein are satisfied by the Galerkin approximations, hence inherited also
by the ``true'' solution $\big(u_\veps,\o_\veps,k_\veps\big)$.

The previous argument ensures us that some uniform-in-$\veps$ properties hold true: let us review them.
Firstly, thanks to the lower bound \eqref{est:lower-T} for the lifespan of the solutions, one has that
$$ T \,:=\, \inf_{\eps \in\,]0,1]} T_\eps \,>\,0\,.$$
In particular, all the solutions $\big(u_\veps,\o_\veps,k_\veps\big)_\veps$ are defined on a common time interval $[0,T]$.

Additionally, the pointwise bounds described in \eqref{eq:obound} imply that
$$ \big(\o_\eps\big)_{\eps} \, \subset\, L^\infty\big([0,T] \times \Omega\big)\,, \qquad\qquad \mbox{ with }\qquad 0\, <\, \o_\eps(t,x)\, \leq\, \o^\ast\,,$$
whereas inequality \eqref{eq:kbound} yields
$$  k_\eps(t,x)\, >\,0\,, $$
where the previous esimates hold for all $(t,x) \in [0,T] \times \Omega$.

Furthermore, the uniform estimate \eqref{est:tot-E_T} gives us
$$ \left(u_\eps, \o_\eps, \sqrt{k_\eps}\right)_\eps\, \subset\, L_T^\infty(H^s) \times L_T^\infty(H^s)\times L_T^\infty(H^s)\,.$$
Then, it is direct in our setting to deduce that $\big(k_\eps\big)_\eps \,\subset\, L_T^\infty(H^s) $ as well. From the uniform estimate \eqref{est:tot-E_T}
and inequality \eqref{est:from-key-Prop}, we also obtain that
\begin{equation}\label{keynorms}
\left(\sqrt{\dfrac{k_\eps}{\o_\eps}}\, \D u_\eps\right)_\eps\,, \quad \left(\sqrt{\dfrac{k_\eps}{\o_\eps}}\, \nabla \o_\eps\right)_\eps\,, 
\quad \left(\sqrt{\dfrac{k_\eps}{\o_\eps}}\, \nabla \sqrt{k_\eps}\right)_\eps \quad \subset \; L^2_T(H^s)\,.
\end{equation}

Thus, by Banach-Alaoglu theorem we deduce the existence of a triplet $\big(u,\o,k\big)\in L^\infty_T(H^s)\times L^\infty_T(H^s)\times L^\infty_T(H^s)$
such that, up to a suitable extraction of a subsequence, $\big(u_\veps,\o_\veps,k_\veps\big)$ converges to $\big(u,\o,k\big)$ in the weak-$*$ topology
of that space.
Our next goal is to prove prove that $\big(u,\o,k\big)$ is indeed the sought solution to the original system \eqref{eq:kolm_d}.

In order to reach our goal, we are going to use a compactness argument. Let us start by considering the velocity fields $u_\veps$.
Recall the equation for $u_\eps$:
\begin{equation*}
	 \d_tu_\eps \,+\,(u_\eps\cdot\nabla) u_\eps\,+\,\nabla\pi_\eps\,-\,\nu\,\div\left(\dfrac{k_\eps}{\o_\eps}\,\D u_\eps\right)\,=\,0\, .
\end{equation*}
Repeating the analysis performed in Subsection \ref{ss:pressure}, we infer that
$$\big( \nabla \pi_\veps\big)\,\subset\,L^\infty_T(H^{s-1})\,.$$
From this property, we easily find that
$$\big( \d_t u_\eps\big)_\eps \,\subset\, L^\infty_T(H^{s-1})
\qquad\qquad \Longrightarrow\qquad\qquad 
\big(u_\eps \big)_{\eps}\, \subset\, L_T^\infty (H^s) \cap W_T^{1,\infty}(H^{s-1})\,.$$
Hence, Ascoli-Arzel\`a theorem implies the compact inclusion $\big(u_\eps\big)_\eps\,\subset \subset\,C_T(H^{s-1})$. By interpolation, we immediately deduce the strong convergence
$$
u_\eps\, \to\, u \qquad\qquad \text{ in }\qquad  C_T(H^\s)\,, \quad \text{ for any }\ \ 0 \leq \s < s\,.$$
If follows from the previous regularities and Sobolev embeddings that, then, $u_\eps$ converges pointwise together with its first order derivatives:
\begin{equation}\label{pointwiselimit}
u_\eps\, \to\, u \qquad \text{ and } \qquad D u_\eps\, \to\, D u \qquad \qquad \text{ everywhere in } \ [0,T] \times \Omega\,.
\end{equation}

It goes without saying that an analogous analysis can be made for the sequences $\big(\o_\eps\big)_\eps$ and $\big(k_\eps\big)_\eps$, yielding
pointwise convergence for them and their first order derivatives. At that point, it is an easy matter to pass to the limit in the weak formulation of the equations
and see that $\big(u,\o,k\big)$ is indeed a solution to the original system \eqref{eq:kolm_d}.
With this at hand, we can conclude also that the gradient of the pressure $\nabla\pi$, recovered from the target velocity
and its equation, has the claimed regularity, namely
$\nabla \pi \in L^\infty_T(H^{s-1})\cap C_T(H^{\s-1})$, for any $\s<1$.

Observe that, thanks to the previous analysis and following the arguments in \cite{F-GB},
it is not difficult to conclude also that the triplet $\big(u,\o,\sqrt{k}\big)$ solves the reformulated
system \eqref{eq:kolm_dnew} and possesses the claimed regularity properties.

\subsection{Uniqueness of solutions} \label{ss:uniqueness}

Now, we focus on the proof of uniqueness of solutions in the claimed functional framework, namely in the space
\begin{align*}
\mathbb{X}_T(\Omega)\,:=\,\Big\{ (u,\nabla\pi,\o,k) \; \Big| & \quad u\,,\, \o\,,\, \sqrt{k} \,\in\, C\big([0,T];L^2(\Omega)\big)\,, \quad
\nabla\pi\,\in\,L^\infty\big([0,T];H^{-1}(\Omega)\big)\,, \\
&\qquad\quad \o\,,\, \o^{-1}\,,\, k \,\in\, L^\infty\big([0,T]\times \Omega\big)\,, \quad \o >0\,, \quad k \ge 0\,, \\
&\qquad\qquad\qquad \div u\,=\,0\,, \qquad \nabla u\,,\, \nabla \o\,,\, \nabla\sqrt{k}\, \in\, L^\infty\big([0,T]\times \Omega\big)
\Big\}\,.
\end{align*}

First of all, we notice that the original system \eqref{eq:kolm_d} and the modified system \eqref{eq:kolm_dnew} are equivalent
in the space $\mbb X_T(\Omega)$. This claim is justified thanks to Lemma 4.1 of \cite{F-GB}, whose proof can be adapted with no difficulties
to the multi-dimensional case.

With this property at hand, we deduce that, for establishing uniqueness of solutions for system \eqref{eq:kolm_d}, it is enough to prove it
for solutions $\big(u,\o,\bt\big)$ to system \eqref{eq:kolm_dnew}.
Thus, the uniqueness statement of Theorem \ref{t:d_wp} is a consequence of the following result.

\begin{thm}
Consider two triplets $\big(u_1,\o_1,\bt_1\big)$ and $\big(u_2,\o_2,\bt_2\big)$ and assume that they are both solutions to \eqref{eq:kolm_dnew},
related to some initial datum $\big(u_{0,j},\o_{0,j},\bt_{0,j}\big)$, for $j=1,2$. Assume also that, for some time $T>0$ and $j=1,2$, it holds that 
	\begin{align*}
\big(u_j,\o_j,\bt_j\big)\, \in\,\wtilde{\mbb X}_T\,:=\,
\Big\{ (u,\o,\bt) \; \Big| & \quad \o\,,\, \o^{-1}\,,\, \bt \,\in\, L^\infty\big([0,T]\times \Omega\big)\,, \quad \o >0\,, \quad \bt \ge 0\,,  \\
&\qquad\qquad
u\,\in\,L^\infty\big([0,T];L^2(\Omega)\big)\,, \quad \div u\,=\,0\,, \\
&\qquad\qquad\qquad\qquad\qquad \nabla u\,,\, \nabla \o\,,\, \nabla\bt\, \in\, L^\infty\big([0,T]\times \Omega\big)
\Big\}\,.
	\end{align*}
Define the difference of the solutions as 
\begin{equation*}
	U\,:=\, u_1-u_2\,, \qquad \Sigma\, :=\, \o_1-\o_2\,, \qquad B\,:=\, \bt_1-\bt_2
\end{equation*}
and assume that all these quantities belong to $C\big([0,T];L^2(\Omega)\big)$. Define the energy norm
\begin{equation*}
	\mathbb{E}(t)\,:=\, \norm{U(t)}_{L^2}^2\, +\, \norm{\Sigma(t)}_{L^2}^2\, + \,\norm{B(t)}_{L^2}^2\,.
\end{equation*}
Then, there exists a constant $C = C(\nu, \alpha_1, \dots, \alpha_4)>0$, depending only on the quantities inside the brackets,
and a function $\Theta \in L^1\big([0,T]\big)$ such that the following stability estimate holds true:
\begin{equation*}
\forall\,t\in[0,T]\,,\qquad\quad	\mathbb{E}(t)\, \leq\, \mathbb{E}(0)\,\exp \left( C \int_0^t \Theta(\tau)\,\dd \tau \right)\,.
\end{equation*}
\end{thm}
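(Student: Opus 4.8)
The plan is a standard $L^2$ energy estimate on the system solved by the differences $(U,\Sigma,B)$, the only genuinely delicate point being the treatment of the degenerate viscosity/diffusion terms. First I subtract, for the two solutions, the three evolution equations of \eqref{eq:kolm_dnew}, expanding every product difference telescopically, i.e.\ $F_1G_1-F_2G_2=F_1(G_1-G_2)+(F_1-F_2)G_2$. Setting $a_i:=\bt_i^2/\o_i\geq0$, this produces for $U$ the equation
\[
\d_tU+(u_1\cdot\nabla)U+(U\cdot\nabla)u_2+\nabla(\pi_1-\pi_2)-\nu\,\div\big(a_1\,\D U\big)=\nu\,\div\big((a_1-a_2)\,\D u_2\big)\,,
\]
for $\Sigma$ the analogous equation with diffusion $\alpha_1\div(a_1\nabla\Sigma)$, remainder $\alpha_1\div((a_1-a_2)\nabla\o_2)$ and the extra (favourably signed) zero-order term $-\alpha_2(\o_1+\o_2)\Sigma$, and for $B$ the analogous equation with diffusion $\alpha_3\div(a_1\nabla B)$, remainder $\alpha_3\div((a_1-a_2)\nabla\bt_2)$, and a zero-order right-hand side obtained by expanding telescopically $-\tfrac12\bt\o$, $\tfrac{\alpha_4}{2}\tfrac\bt\o|\D u|^2$ and $\alpha_3\tfrac\bt\o|\nabla\bt|^2$; in particular $-\tfrac12(\bt_1\o_1-\bt_2\o_2)=-\tfrac12\bt_1\Sigma-\tfrac12\o_2B$ and $\tfrac{\bt_1}{\o_1}|\D u_1|^2-\tfrac{\bt_2}{\o_2}|\D u_2|^2=\big(\tfrac{\bt_1}{\o_1}-\tfrac{\bt_2}{\o_2}\big)|\D u_1|^2+\tfrac{\bt_2}{\o_2}(\D u_1+\D u_2):\D U$, and similarly for the $|\nabla\bt|^2$ term. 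I then test these equations with $U$, $\Sigma$, $B$ respectively, integrate over $\Omega$ and sum. The transport terms $\int(u_1\cdot\nabla)f\,f\,\dx$ (with $f=U,\Sigma,B$) vanish because $\div u_1=0$, and the pressure term drops out by the same constraint $\div U=0$; the contributions $-\alpha_2\int(\o_1+\o_2)\Sigma^2\,\dx$ and $-\tfrac12\int\o_2B^2\,\dx$ have a good sign and are discarded.

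The core of the argument is the diffusion term. Writing $a_1M_1-a_2M_2=\tfrac12(a_1+a_2)(M_1-M_2)+\tfrac12(a_1-a_2)(M_1+M_2)$ with $M_i\in\{\D u_i,\nabla\o_i,\nabla\bt_i\}$, an integration by parts puts on the left-hand side the three \emph{non-negative} degenerate dissipations $\tfrac\nu2\int(a_1+a_2)|\D U|^2$, $\tfrac{\alpha_1}{2}\int(a_1+a_2)|\nabla\Sigma|^2$, $\tfrac{\alpha_3}{2}\int(a_1+a_2)|\nabla B|^2$, and on the right-hand side the ``bad'' remainders $\tfrac12\int(a_1-a_2)(M_1+M_2):\nabla U\,\dx$ (and its analogues for $\Sigma$ and $B$). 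The key observation — which is exactly what makes the vacuum region harmless — is that, although $a_1/\sqrt{a_1+a_2}$ need \emph{not} be bounded (so one cannot keep $a_1\D U$ alone and try to absorb), the symmetrised quantity satisfies the pointwise bound
\[
\frac{(a_1-a_2)^2}{a_1+a_2}\;\lesssim\;\bt_1^2\,\Sigma^2+B^2\;\lesssim\;\Sigma^2+B^2\,,
\]
which follows from $a_1-a_2=(\o_1\o_2)^{-1}\big(-\bt_1^2\Sigma+\o_1(\bt_1+\bt_2)B\big)$, from $a_1+a_2\gtrsim\bt_1^2+\bt_2^2$, and from the elementary inequalities $\bt_1^4/(\bt_1^2+\bt_2^2)\leq\bt_1^2$ and $(\bt_1+\bt_2)^2/(\bt_1^2+\bt_2^2)\leq2$; here and below the implicit constants are allowed to depend on the $L^\infty$-bounds $\norm{\o_i^{-1}}_{L^\infty},\norm{\o_i}_{L^\infty},\norm{\bt_i}_{L^\infty}$ on $[0,T]\times\Omega$. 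Consequently, by Cauchy--Schwarz and Young's inequality,
\[
\Big|\tfrac12\int(a_1-a_2)(M_1+M_2):\nabla U\,\dx\Big|\;\leq\;\delta\int(a_1+a_2)|\D U|^2\,\dx+C_\delta\int\frac{(a_1-a_2)^2}{a_1+a_2}\,|M_1+M_2|^2\,\dx\,,
\]
and the last integral is $\lesssim\big(\norm{\nabla u_1}_{L^\infty}^2+\norm{\nabla u_2}_{L^\infty}^2\big)\,\mathbb{E}(t)$; taking $\delta$ small, the first term is reabsorbed into the dissipations. The analogues for $\Sigma$ and $B$ are identical.

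The remaining zero-order terms are routine: each is a product of at most two of $U,\Sigma,B$ with an $L^\infty$ coefficient — for instance $\int(U\cdot\nabla)u_2\cdot U$, $\int(U\cdot\nabla)\o_2\,\Sigma$, $\int(U\cdot\nabla)\bt_2\,B$, $-\tfrac12\int\bt_1\Sigma B$, and $\int\big(\tfrac{\bt_1}{\o_1}-\tfrac{\bt_2}{\o_2}\big)\big(|\D u_1|^2+|\nabla\bt_1|^2\big)B$, using $|\tfrac{\bt_1}{\o_1}-\tfrac{\bt_2}{\o_2}|\lesssim|\Sigma|+|B|$ — so all are bounded by $C\,\Theta(t)\,\mathbb{E}(t)$. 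The only terms still carrying a full derivative of a difference are $\tfrac{\bt_2}{\o_2}(\D u_1+\D u_2):\D U\,B$ and $\tfrac{\bt_2}{\o_2}(\nabla\bt_1+\nabla\bt_2)\cdot\nabla B\,B$; for these I peel off a factor $\sqrt{a_1+a_2}$ onto $\D U$ (resp.\ onto $\nabla B$), and since $\bt_2/\sqrt{a_1+a_2}\leq\sqrt{\o_2}$ is bounded, Young's inequality gives $\leq\delta\int(a_1+a_2)|\D U|^2+C_\delta\,\Theta(t)\,\norm{B}_{L^2}^2$, again reabsorbable. Collecting everything one reaches
\[
\frac{d}{dt}\mathbb{E}(t)+c\int_\Omega(a_1+a_2)\big(|\D U|^2+|\nabla\Sigma|^2+|\nabla B|^2\big)\dx\;\leq\;C\,\Theta(t)\,\mathbb{E}(t)
\]
for some $c>0$ and some $\Theta\in L^1([0,T])$ (in fact $\Theta\in L^\infty([0,T])$) built from the $L^\infty$ norms of $\nabla u_i,\nabla\o_i,\nabla\bt_i,\bt_i$ and the bounds on $\o_i,\o_i^{-1}$, with $C=C(\nu,\alpha_1,\dots,\alpha_4)$; Grönwall's inequality then yields the stated estimate. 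The main obstacle, and the only place where the structure of the system is really exploited, is precisely the degenerate diffusion: the \emph{symmetric} splitting $\tfrac12(a_1+a_2)\D U+\tfrac12(a_1-a_2)(\D u_1+\D u_2)$, rather than the naive $a_1\D U+(a_1-a_2)\D u_2$, is what makes the remainder controllable uniformly up to the set $\{\bt_1=\bt_2=0\}$. A minor technical point — dealt with as in \cite{F-GB} via a Friedrichs mollification together with a standard commutator lemma for transport — is that $(U,\Sigma,B)$ only enjoy $C\big([0,T];L^2\big)$ regularity, so the identities above must first be written for mollified quantities and then passed to the limit.
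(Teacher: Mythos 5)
Your proposal is correct and follows essentially the same strategy as the paper's proof: an $L^2$ energy estimate on the difference system in which the degenerate diffusion is symmetrised so as to produce the dissipation $\int(\bt_1^2/\o_1+\bt_2^2/\o_2)|\D U|^2$ (and its analogues), the remainder being absorbed by peeling a factor of the degenerate weight onto $\D U$, $\nabla\Sigma$, $\nabla B$ via Young's inequality, and concluding with Gr\"onwall. The only organisational difference is that you implement the symmetrisation through the single algebraic identity $a_1M_1-a_2M_2=\tfrac12(a_1+a_2)(M_1-M_2)+\tfrac12(a_1-a_2)(M_1+M_2)$ and the unified pointwise bound $(a_1-a_2)^2/(a_1+a_2)\lesssim\Sigma^2+B^2$, whereas the paper writes each difference equation in two symmetric forms and estimates the pieces of $P=-\tfrac{\bt_1^2}{\o_1\o_2}\Sigma+\tfrac{1}{\o_2}(\bt_1+\bt_2)B$ term by term; these are equivalent.
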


\begin{proof}
The proof follows the same strategy as Theorem 4.2 in \cite{F-GB}. First of all, we find that $U$ solves the following equation,
	\begin{equation}\label{eq:U1}
		\d_t U + u_1\cdot \nabla U + \nabla(\pi_1-\pi_2) - \nu \div \left( \frac{k_1}{\o_1} \D U\right) = - U\cdot \nabla u_2 + \nu \div \left( P \; \D u_2  \right)\,,
	\end{equation}
where $\pi_j$ stands for the pressure associated to the velocity $u_j$, as well as its ``symmetric'' version
\begin{equation}\label{eq:U2}
	\d_t U + u_2\cdot \nabla U +  \nabla(\pi_2-\pi_1)- \nu \div \left( \frac{k_2}{\o_2} \D U\right) = - U\cdot \nabla u_1 + \nu \div \left( P \; \D u_1  \right)\,,
\end{equation}
with the function $P$ defined as
\begin{equation*}
	P \,:=\, \frac{\bt_1^2}{\o_1} - \frac{\bt_2^2}{\o_2}\, =\,-\frac{\bt_1^2}{\o_1 \o_2} \Sigma + \frac{1}{\o_2} (\bt_1+\bt_2) B\,.
\end{equation*}

From the properties of the functions belonging to the space $\wtilde{\mbb X}_T$, it is easy to see that each pressure gradient $\nabla\pi_j$, for $j=1,2$,
belongs to the space $L^\infty_T(H^{-1})$. This can be seen from the analogue of equation \eqref{elliptic}, by observing that the right-hand side is
$L^\infty_T(H^{-1})$ and the dyadic characterisation \eqref{eq:LP-Sob} of Sobolev norms. Hence, also their difference $\nabla\big(\pi_1-\pi_2\big)$
belongs to $L^\infty_T(H^{-1})$. On the other hand, it follows from the assumptions on $u_1$ and $u_2$ that $U\in L^\infty_T(H^1)$,
so the pairing $\lan \nabla\big(\pi_1-\pi_2\big)\,,\,U\ran$ is well-defined, and vanishes owing to the divergence-free constraint $\div U=0$.
Therefore, we can safely perform energy estimates for
\eqref{eq:U1} and \eqref{eq:U2}, integrate by parts when it is useful and sum the two symmetric estimates: we get 
\begin{align*}
&\frac{\dd}{\dt} \norm{U}_{L^2}^2 + \nu \int_{\Omega} \left( \frac{\bt_1^2}{\o_1} + \frac{\bt_2^2}{\o_2} \right) \seminorm{\D U}^2 \dx \\
&\qquad\leq\,
\left(\norm{\nabla u_1}_{L^\infty} + \norm{\nabla u_2}_{L^\infty} \right) \norm{U}_{L^2}^2 \\
& \qquad\qquad +
\nu \seminorm{\int_\Omega \frac{\bt_1^2}{\o_1 \o_2} \Sigma \left( \D u_1 + \D u_2 \right): \D U \dx  } +  \nu \seminorm{\int_\Omega \frac{1}{\o_2} (\bt_1+\bt_2) B \left( \D u_1 + \D u_2 \right): \D U \dx  } \,.
\end{align*}
By rather straightforward computations, we can estimate 
\begin{align*}
&\nu\seminorm{\int_\Omega  \frac{\bt_1^2}{\o_1 \o_2} \Sigma \left( \D u_1 + \D u_2 \right): \D U \dx  } \\
&\qquad\qquad \leq\,\nu\, \left( \norm{\nabla u_1}_{L^\infty} + \norm{\nabla u_2}_{L^\infty}  \right) \norm{\frac{\bt_1}{\o_2 \sqrt{\o_1}}}_{L^\infty} \norm{\Sigma}_{L^2} \norm{\frac{\bt_1}{\sqrt{\o_1}} \D U  }_{L^2} \\
& \qquad\qquad \leq\,\nu\, \delta \int_{\Omega} \frac{\bt_1^2}{\o_1} |\D U|^2 \dx +
C(\delta,\nu)\left( \norm{\nabla u_1}_{L^\infty}^2 + \norm{\nabla u_2}_{L^\infty}^2  \right) \norm{\frac{\bt_1}{\o_2 \sqrt{\o_1}}}_{L^\infty}^2 \norm{\Sigma}_{L^2}^2\,,
\end{align*}
where $\de>0$ will be fixed later and the constant $C(\de,\nu)>0$ depends only on the values of $\de$ and $\nu$.
Moreover, for $j = 1,2$, one has
\begin{align*}
&\nu\seminorm{\int_\Omega  \frac{\bt_j }{\o_2} B \left( \D u_1 + \D u_2 \right): \D U \dx  } \\
& \qquad\qquad   \leq\,\nu \left(\norm{\nabla u_1}_{L^\infty} + \norm{\nabla u_2}_{L^\infty} \right) \norm{\frac{\sqrt{\o_j}}{\o_2}}_{L^\infty} \norm{B}_{L^2} \norm{\frac{\sqrt{\bt_j}}{\sqrt{\o_j}} \D U }_{L^2} \\
	& \qquad\qquad  \leq\,\nu\,\delta  \int_{\Omega} \frac{\bt_j^2}{\o_j} |\D U|^2 \dx + C(\delta,\nu)  \left( \norm{\nabla u_1}_{L^\infty}^2 + \norm{\nabla u_2}_{L^\infty}^2  \right) \norm{\frac{\sqrt{\o_j}}{\o_2}}_{L^\infty}^2 \norm{B}_{L^2}^2\,.
\end{align*}
Consequently, we get the estimate 
\begin{equation}\label{est:U}
\frac{\dd}{\dt} \norm{U}_{L^2}^2\, +\,\nu\, (1-3\delta) \int_{\Omega} \left( \frac{\bt_1^2}{\o_1} + \frac{\bt_2^2}{\o_2} \right) \seminorm{\D U}^2 \dx \lesssim
\Theta_1(t)\, \mathbb{E}(t)\,,
\end{equation}
where the (implicit) multiplicative constant only depends on the parameter $\nu$ and the function $\Theta_1(t)$ is defined as 
\begin{equation*}
\Theta_1(t)\, :=\,\norm{\big(\nabla u_1, \nabla u_2\big)}_{L^\infty}\,  +\,
\norm{\big(\nabla u_1, \nabla u_2\big)}_{L^\infty}^2 \left(   \norm{\frac{\bt_1}{\o_2 \sqrt{\o_1}}}_{L^\infty}^2 +  \norm{\frac{\sqrt{\o_1}}{\o_2}}_{L^\infty}^2 +  \norm{\frac{1}{\o_2}}_{L^\infty} \right).
\end{equation*}

Next, we move on to the equation for $\Sigma$, in order to get a similar $L^2$ estimate for this quantity.
It is easy to check that $\Sigma$ solves the symmetric equations
\begin{align*}\label{eq:sig1}
	\d_t \Sigma + u_1\cdot \nabla \Sg  - \alpha_1 \, \div \left( \frac{k_1}{\o_1} \nabla \Sg \right) + \alpha_2 (\o_1+\o_2) \Sg
&= - U\cdot \nabla \o_2 + \alpha_1 \, \div \left( P \; \nabla \o_2  \right)\,, \\
\d_t \Sigma + u_2\cdot \nabla \Sg  - \alpha_1 \, \div \left( \frac{k_2}{\o_2} \nabla \Sg \right) + \alpha_2 (\o_1+\o_2) \Sg
&= - U\cdot \nabla \o_1 + \alpha_1 \, \div \left( P \; \nabla \o_1  \right).
\end{align*}
Then, we can perform similar computations as above to find 
\begin{equation}\label{est:sig}
\frac{\dd}{\dt} \norm{\Sg}_{L^2}^2 + \alpha_1 (1-3\delta) \int_{\Omega} \left( \frac{\bt_1^2}{\o_1} + \frac{\bt_2^2}{\o_2} \right) \seminorm{\nabla \Sg}^2 \dx + \alpha_2 \int_{\Omega} (\o_1 + \o_2) |\Sg|^2 \dx  \lesssim \Theta_2(t)\, \mathbb{E}(t)\,,
\end{equation}
where the multiplicative constant depends only on $\alpha_1$ and, this time, we have defined
\begin{equation*}
	\Theta_2(t)\, :=\, \norm{\big(\nabla \o_1, \nabla \o_2\big)}_{L^\infty}  +\norm{\big(\nabla \o_1, \nabla \o_2\big)}_{L^\infty}^2 \left(   \norm{\frac{\bt_1}{\o_2 \sqrt{\o_1}}}_{L^\infty}^2 +  \norm{\frac{\sqrt{\o_1}}{\o_2}}_{L^\infty}^2 +  \norm{\frac{1}{\o_2}}_{L^\infty} \right).
\end{equation*}

Finally, we need to find a $L^2$ estimate for $B$. For notational convenience, let us introduce the quantity
\begin{equation*}
	\wtilde{P}\, :=\, \frac{\bt_1}{\o_1} - \frac{\bt_2}{\o_2} = -\frac{\bt_1}{\o_1 \o_2} \Sg + \frac{1}{\o_2} B\,,
\end{equation*}
which is needed to express the extra terms appearing in the equation for $\bt_1$ and $\bt_2$.
We find the following two symmetric equations for $B$:
\begin{align*}
\d_t B + u_1\cdot \nabla B - \alpha_3 \, \div \left( \frac{\bt_1^2}{\o_1} \nabla B\right) + \frac{1}{2} \o_1 B
&= - U\cdot \nabla \bt_2 + \alpha_3 \, \div \left( {P} \, \nabla \bt_2  \right)- \frac{1}{2} \Sg \bt_2 \\
&\qquad + \frac{\alpha_4 }{2} \frac{\bt_1}{\o_1} \D U : (\D u_1 + \D u_2) + \frac{\alpha_4}{2} \wtilde{P} | \D u_2 |^2 \\
&\qquad + \alpha_3 \frac{\bt_1}{\o_1} \nabla B : (\nabla \bt_1 + \nabla \bt_2) + \alpha_3 \wtilde{P} |\nabla \bt_2|^2\,, \\
	\d_t B + u_2\cdot \nabla B - \alpha_3 \, \div \left( \frac{\bt_2^2}{\o_2} \nabla B\right) + \frac{1}{2} \o_2 B  &=
- U\cdot \nabla \bt_1 + \alpha_3 \, \div \left( {P} \, \nabla \bt_1  \right)- \frac{1}{2} \Sg \bt_1 \\
&\qquad + \frac{\alpha_4 }{2} \frac{\bt_2}{\o_2} \D U : (\D u_1 + \D u_2) + \frac{\alpha_4}{2} \wtilde{P} | \D u_1 |^2 \\
&\qquad + \alpha_3 \frac{\bt_2}{\o_2} \nabla B : (\nabla \bt_1 + \nabla \bt_2) + \alpha_3 \wtilde{P} |\nabla \bt_1|^2\,.
\end{align*}
We notice that the first two terms in the right hand side are analogous to the terms appearing in the previous equations, hence they can be estimated in the
same way. The third terms, instead, appears in the energy estimate as a contribution of the type
\begin{equation*}
\seminorm{\int_{\Omega} (\bt_1+\bt_2) \Sg B \dx} \lesssim \left(\norm{\bt_1}_{L^\infty} + \norm{\bt_2}_{L^\infty}\right)\, \mathbb{E}(t)\,.
\end{equation*}
The fourth terms can be estimated as 
\begin{align*}
\seminorm{\int_{\Omega} \frac{\alpha_4 }{2} \frac{\bt_j}{\o_j} \D U : (\D u_1 + \D u_2) B \dx }\, &\leq\,
\nu\,\delta \int_{\Omega} \frac{\bt_j^2}{\o_j} |\D U|^2 \dx \\
&\qquad\qquad + C(\delta,\alpha_4,\nu)\,\norm{ \frac{1}{\o_j}}_{L^\infty} \norm{\big(\nabla u_1,\nabla u_2\big)}_{L^\infty}^2 \norm{B}_{L^2}^2\,.
\end{align*}
Similarly, the sixth terms can be bounded as follows:
\begin{align*}
\seminorm{\int_{\Omega} \alpha_3  \frac{\bt_j}{\o_j} \nabla B : (\nabla \bt_1 + \nabla \bt_2) B \dx }\,&\leq\,
\alpha_3\,\delta \int_{\Omega} \frac{\bt_j^2}{\o_j} |\nabla B|^2 \dx \\
&\qquad\qquad + C(\delta,\alpha_3)\,\norm{ \frac{1}{\o_j}}_{L^\infty} \norm{\big(\nabla \bt_1,\nabla \bt_2\big)}_{L^\infty}^2 \norm{B}_{L^2}^2\,.
\end{align*}
Finally, we can easily control the terms where $\wtilde{P}$ appears. Indeed, we have 
\begin{equation*}
\seminorm{\int_{\Omega} \left( \frac{\alpha_4}{2} \wtilde{P} |\D u_j|^2 + \alpha_3 \wtilde{P} |\nabla \bt_j |^2 \right) B \dx } \lesssim 
\norm{\big(\nabla u_j, \nabla \bt_j\big)}_{L^\infty}^2 \left( \norm{\frac{\bt_1}{\o_1 \o_2}}_{L^\infty} +\norm{ \frac{1}{\o_2} }_{L^\infty}  \right) \mathbb{E}(t)\,.
\end{equation*}

Gathering all the previous estimates, we deduce the following bound for $B$:
\begin{align}
\label{est:B}
&\frac{\dd}{\dt} \norm{B}_{L^2}^2 + \alpha_3(1-3\delta) \int_{\Omega} \left( \frac{\bt_1^2}{\o_1} + \frac{\bt_2^2}{\o_2} \right) \seminorm{\nabla B}^2 \dx + \frac{1}{2} \int_{\Omega} (\o_1 + \o_2) |B|^2 \dx \\
\nonumber
&\qquad\qquad\qquad\qquad\qquad\qquad\qquad\qquad
\leq C\, \Theta_3(t)\, \mathbb{E}(t) \,+\,\nu\,\delta\, \int_{\Omega} \left( \frac{\bt_1^2}{\o_1} +  \frac{\bt_2^2}{\o_2} \right) \seminorm{\D U }^2 \dx\,,
\end{align}
where the constant $C>0$ only depends on the various parameters $\big(\nu,\alpha_1,\ldots \alpha_4\big)$ and where we have set
\begin{align*}
\Theta_3(t)\, &:=\, \norm{\big(\nabla u_1,\nabla u_2, \nabla \bt_1, \nabla \bt_2\big)}_{L^\infty} + \norm{\big(\bt_1,\bt_2\big)}_{L^\infty} \\
& \qquad +  \norm{\big(\nabla u_1,\nabla u_2, \nabla \bt_1, \nabla \bt_2\big)}_{L^\infty}^2 \\
&\qquad\qquad\qquad \times\, \left( \norm{\frac{\bt_1}{\sqrt{\o_1} \o_2}  }_{L^\infty}^2+ \norm{\frac{\sqrt{\o_1}}{\o_2}  }_{L^\infty}^2+    \norm{\left( \frac{1}{\o_1},  \frac{1}{\o_2}\right)}_{L^\infty} + \norm{\frac{\bt_1}{\o_1 \o_2}  }_{L^\infty} \right).
\end{align*}

To conclude the argument, we sum up estimates \eqref{est:U}, \eqref{est:sig} and \eqref{est:B}. Fixing the value of $\delta >0$ small enough to absorb the extra
terms within the left hand side of the obtained inequality, in the end we find the estimate
\begin{equation*}
\frac{\dd}{\dt} \mathbb{E}(t)\, \lesssim\, \Big(\Theta_1(t) + \Theta_2(t) + \Theta_3(t)\Big)\, \mathbb{E}(t)\,,
\end{equation*}
where the multiplicative constant $C = C(\nu,\alpha_1,\dots \alpha_4)>0$ only depends on the quantities appearing inside the brackets.
At this point, an application of the Gr\"onwall lemma concludes the proof of the theorem.
\end{proof}


\addcontentsline{toc}{section}{References}
{\small

}

\end{document}